\newtheorem{defn}{Definition}[section]
\newtheorem{lemma}[defn]{Lemma}
\newtheorem{prop}[defn]{Proposition}
\newtheorem{theo}[defn]{Theorem}
\newtheorem{coro}[defn]{Corollary}
\newtheorem{claim}{Claim}
\newtheorem{rk}[defn]{Remark}
\newtheorem{ques}[defn]{Question}
\def\Ric{\mathop{\rm Ric}\nolimits}
\def\Rm{\mathop{\rm Rm}\nolimits}
\def\tr{\mathop{\rm tr}\nolimits}
\def\vol{\mathop{\rm vol}\nolimits}
\def\dim{\mathop{\rm dim}\nolimits}
\def\vol{\mathop{\rm Vol}\nolimits}
\def\Isom{\mathop{\rm Isom}\nolimits}
\def\div{\mathop{\rm div}\nolimits}
\def\A{\mathop{\rm A}\nolimits}
\def\AVR{\mathop{\rm AVR}\nolimits}
\def\Li{\mathop{\rm \mathscr{L}}\nolimits}
\def\Ric{\mathop{\rm Ric}\nolimits}
\def\Rm{\mathop{\rm Rm}\nolimits}
\def\tr{\mathop{\rm tr}\nolimits}
\def\vol{\mathop{\rm vol}\nolimits}
\def\dim{\mathop{\rm dim}\nolimits}
\def\vol{\mathop{\rm Vol}\nolimits}
\def\Isom{\mathop{\rm Isom}\nolimits}
\def\div{\mathop{\rm div}\nolimits}
\def\A{\mathop{\rm A}\nolimits}
\def\AVR{\mathop{\rm AVR}\nolimits}
\def\Li{\mathop{\rm \mathscr{L}}\nolimits}
\def\HypI{\mathop{\rm (\mathscr{H}_1)}\nolimits}
\def\HypII{\mathop{\rm (\mathscr{H}_2)}\nolimits}
\def\supp{\mathop{\rm supp}\nolimits}
\def\R{\mathop{\rm R}\nolimits}
\def\Ricinf{\mathop{\rm \mathfrak{Ric}}\nolimits}
\def\Rinf{\mathop{\rm \mathfrak{R}}\nolimits}
\title[Unique continuation at infinity for conical Ricci expanders]{Unique continuation at infinity for conical Ricci expanders}
\author[Alix Deruelle]{Alix Deruelle}
\address[Alix Deruelle]{Mathematics Institute, University of Warwick, Gibbet Hill Rd, Coventry, West Midlands CV4 7AL}
\email{A.Deruelle@warwick.ac.uk}
\begin{document}
\begin{abstract}
We establish Carleman inequalities for the weighted laplacian associated to an expanding gradient Ricci soliton. As a consequence, a unique continuation at infinity is proved for asymptotically Ricci flat Ricci expanders. The obstruction at infinity is a symmetric $2$-tensor defined on the link of the corresponding asymptotic cone. 
\end{abstract}
\maketitle
\section{Introduction}
We investigate the unique continuation at infinity of conical Ricci gradient expanders. Recall that a Ricci expander $(M^n,g(\tau))_{\tau\in(-1,+\infty)}$ is a fixed point of the Ricci flow of the form  $g(\tau)=(1+\tau)\phi_{\tau}^*g$ where $\partial_{\tau}\phi_{\tau}=-\nabla^gf/(1+\tau)$, for $\tau\in(-1,+\infty)$, where $f:M\rightarrow \mathbb{R}$ is a smooth function called the potential function. Equivalently, a gradient Ricci expander is a triplet $(M^n,g,\nabla^gf)$ such that the corresponding Bakry-Émery tensor is constantly negative, i.e. $$\Ric(g)+\nabla^{g,2}(-f)=-\frac{g}{2}.$$ 

It turns out that these singularities can be used to smooth out metric cones instantaneously and appear as blow-down of non collapsed solutions to the Ricci flow with nonnegative curvature operator : \cite{Sch-Sim}. Numerous examples of asymptotically conical expanding gradient Ricci solitons have been found : they can be classified, say, in two subclasses according to their convergence rate to their asymptotic cone. Before going further, we recall below the definition of an asymptotically conical Ricci expander : see appendix \ref{sol-equ-sec} for the notations.
\begin{defn}\label{defn-asy-con-egs}
A normalized expanding gradient Ricci soliton $(M^n,g,\nabla^g f)$ is asymptotically conical with asymptotic cone $(C(X),g_{C(X)}:=dr^2+r^2g_X,r\partial r/2)$ if there exists a compact $K\subset M$, a positive radius $R$ and a diffeomorphism $\phi:M\setminus K\rightarrow C(X)\setminus B(o,R)$ such that
\begin{eqnarray}
&&\sup_{\partial B(o,r)}\arrowvert\nabla^k(\phi_*g-g_{C(X)})\arrowvert_{g_{C(X)}}=\textit{O}(f_{k}(r)),\quad \forall k\in \mathbb{N},\label{class-def-asy-con}\\
&&(f+\mu(g))(\phi^{-1}(r,x))=\frac{r^2}{4},\quad\forall (r,x)\in C(X)\setminus B(o,R), \label{cond-exp-1}
\end{eqnarray}
where $f_k(r)=\textit{o}(1)$ as $r\rightarrow+\infty$ and where $\mu(g)$ denotes the entropy.
\end{defn}

As shown in \cite{Der-Asy-Com-Egs}, either the convergence is polynomial (generic case) and the convergence rate is $\tau=2$, i.e. $f_k(r)=r^{-2-k}$ for any nonnegative integer $k$. Explicit asymptotically conical Ricci expanders are given by the rotationally symmetric examples due to Bryant [Chap. $1$,\cite{Cho-Lu-Ni-I}] coming out of the cones $(C(\mathbb{S}^{n-1}),dr^2+r^2c^2g_{\mathbb{S}^{n-1}},r\partial_r/2)_{c>0}$. In the Kähler setting, similar examples have been built by Cao \cite{Cao-Egs}. This ansatz has been extended by \cite{Fel-Ilm-Kno} where they produced Kähler Ricci expanders coming out of the cones $(C(\mathbb{S}^{2n-1}/\mathbb{Z}_k), i\partial\bar{\partial}\arrowvert\cdot\arrowvert^{2p}/p,r\partial_r/2)$ where $k>n$ (condition equivalent to negative first Chern class) and where $p$, the angle, is positive different from $1$. See \cite{Buz-Dan-Gal} and the references therein for more sophisticated examples using a similar ansatz.  Implicit deformations of the Bryant examples have been proved to exist by the author \cite{Der-Asy-Com-Egs}, \cite{Der-Smo-Pos-Met-Con} : it turns out that the uniqueness issue comes almost for free by a continuity method.

  Either the convergence is exponential (asymptotically Ricci flat case) and the convergence rate is $\tau=n$, i.e. $f_k(r)=r^{-n+k}e^{-r^2/4}$ for any nonnegative integer $k$. The first non flat asymptotically conical Ricci expanders coming out of a Ricci flat cone are the examples due to \cite{Fel-Ilm-Kno} mentioned above  with $p=1$. \cite{Fut-Wan} provided a more systematic study of Kähler Ricci expanders coming out of Kähler Ricci flat cones. Implicit examples have been built by Siepmann \cite{Sie-PHD}, where some of the previous examples are recovered : see the references therein.
  
In the asymptotically Ricci flat case, a continuity method is not adequate, moreover, it turns out that the uniqueness at infinity should not be true in general : see \cite{Ilm-Mcf-Lec} for counterexamples in the Mean Curvature Flow setting. Besides, conical Ricci expanders share many analogies with conformally compact Einstein metrics. In this regard, we benefited from the work of Biquard \cite{Biq-Uni-Con} and Anderson-Herzlich \cite{And-Her} dealing with uniqueness issues at infinity of such metrics. In this context, the obstruction at infinity is given by a symmetric $2$-tensor defined on the conformal infinity : it is a global invariant, i.e. is not an invariant depending locally on the metric at the boundary. It turns out that there is a similar obstruction in the setting of conical Ricci expanders. See definition \ref{defn-ric-tens-infty-dif} below. A closer motivation comes from the recent work of Kotschwar-Lu \cite{Kot-Lu-Con} dealing with the uniqueness at infinity of conical Ricci shrinkers : as the asymptotic cone is at the end of the lifetime of such a singularity, there is no obstruction at infinity. Consequently, their approach is based on the uniqueness of backward solutions to nonlinear evolution equations. We now start with the definition of the obstruction tensor of two Ricci expanders with isometric asymptotic cones.

\begin{defn}\label{defn-ric-tens-infty-dif}
Let $(M_i^n,g_i,\nabla^{g_i}f_i)_{i=1,2}$ be two expanding gradient Ricci solitons with isometric asymptotic cones $(C(X_i),dr^2+r^2g_{X_i},r\partial_r/2)_{i=1,2}$. Then the obstruction tensor at infinity of these two expanders is defined, whenever it makes sense, by 
\begin{eqnarray}
\Ricinf(g_2-g_1):=\lim_{r\rightarrow +\infty}r^{n-2}e^{r^2/4}\left({\phi_2}_*\Ric(g_2)-{\phi_{2}^1}_*{\phi_1}_*\Ric(g_1)\right)|_{\{r\}\times X_2},
\end{eqnarray}
where $\phi_{2}^1$ denotes an isometry (preserving the cone structure) between the two asymptotic cones. 
\end{defn}

We consider the following asymptotic condition ensuring a unique continuation result at infinity.
Let $(M^n,g,\nabla^g f)$ be an expanding gradient Ricci soliton asymptotic to $(C(X),dr^2+r^2g_X,r\partial_r/2)$. Then $\HypI$ consists of the following condition at infinity : \\

\begin{itemize}
\item Pinched Ricci curvature at infinity : $$\sup_{\{1\}\times X}\arrowvert\Ric(g_{C(X)})\arrowvert\leq \epsilon,$$ for some small positive $\epsilon$, i.e. $$\limsup_{+\infty}f\arrowvert \Ric(g)\arrowvert=:\A^0_g(\Ric(g))\leq \epsilon.$$
\end{itemize}

The first main theorem is the following :
\begin{theo}\label{Main-theo-uni-inf-0}
Let $(M_i^n,g_i,\nabla^{g_i}f_i)_{i=1,2}$ be two expanding gradient Ricci solitons with isometric asymptotic cones satisfying $\HypI$. Assume the obstruction tensor $\Ricinf(g_2-g_1)$ is well-defined and is a smooth symmetric $2$-tensor on the link $X_2$. Then $(M_1^n,g_1,\nabla^{g_1}f_1)$ and $(M_2^n,g_2,\nabla^{g_2}f_2)$ are isometric outside a compact set if the obstruction tensor $\Ricinf(g_2-g_1)$ vanishes.
\end{theo}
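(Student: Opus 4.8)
The plan is to reduce the geometric rigidity statement to a unique continuation problem at infinity for a weighted elliptic system, which is then closed by the Carleman inequalities advertised in the abstract. First I would fix a common gauge: using the diffeomorphisms $\phi_i$ together with the cone isometry $\phi_2^1$, transport $g_1$ onto the end of $M_2$ so that both metrics live on a single model end $C(X)\setminus B(o,R)$ and agree to leading order with $g_{C(X)}$. The difference $h:=g_2-g_1$ and the difference of potentials $w:=f_2-f_1$ are then defined on this end. Since the soliton equation $\Ric(g_i)+\Hess(-f_i)=-g_i/2$ is only weakly elliptic because of its diffeomorphism invariance, I would impose a DeTurck-type (harmonic) gauge relative to $g_1$ to break this invariance. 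Subtracting the two soliton equations in this gauge yields, schematically, a coupled second-order system whose principal part is the weighted (drift) Laplacian $\Delta_f:=\Delta-\nabla_{\nf}$:
$$\Delta_f h=\Rm\ast h+\Hess w+(\text{l.o.t.}),\qquad \Delta_f w=(\text{l.o.t.}),$$
the second equation coming from the trace and contracted soliton identities.

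Next I would prolong this into a closed system. Differentiating the contracted second Bianchi identity and the soliton identities, and collecting $h$, $w$, and the differences of connections and curvatures into a single unknown $U$, I expect to obtain a differential inequality of the form
$$|\Delta_f U|\le C\big(|U|+|\nabla U|\big)$$
on the end, where the constant $C$ is controlled using $\HypI$: the pinching $\A^0_g(\Ric(g))\le\epsilon$ bounds the curvature coefficients so that the zeroth-order terms cannot overwhelm the Carleman weight.

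The technical heart is the Carleman inequality for $\Delta_f$. Here the weight $\varphi$ must be adapted simultaneously to the drift Laplacian, to the Gaussian weighted measure $e^{-f}\diff\vol$, and to the exponential decay scale $e^{-r^2/4}$ that governs the obstruction; I would build $\varphi$ out of powers of $f\sim r^2/4$ and a large parameter $\lambda$, aiming for an estimate
$$\lambda\int e^{2\lambda\varphi}|U|^2\,e^{-f}\diff\vol\le C\int e^{2\lambda\varphi}|\Delta_f U|^2\,e^{-f}\diff\vol$$
valid for $U$ supported far out on the end (the inner boundary terms being discarded by a cutoff). Proving the requisite coercivity, i.e. the correct sign of the conjugated commutator $[\Delta_f,e^{\lambda\varphi}]$, is the step I expect to be the main obstacle, since the drift term and the Gaussian weight must be balanced precisely against the chosen $\varphi$; a secondary subtlety is that the metric--potential coupling in the prolonged system must be arranged so that $\HypI$ supplies exactly the curvature control needed.

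Finally I would feed in the vanishing of the obstruction. By Definition \ref{defn-ric-tens-infty-dif}, $\Ricinf(g_2-g_1)=0$ forces $\Ric(g_2)-\Ric(g_1)$, and hence through the elliptic system the whole unknown $U$ together with its derivatives, to decay strictly faster than the generic rate $r^{-(n-2)}e^{-r^2/4}$. This makes $U$ admissible in the Carleman inequality; inserting it, dividing by $\lambda$, and letting $\lambda\to+\infty$ forces $\int|U|^2=0$ on the end, so $U\equiv0$ and in particular $h\equiv0$ outside a compact set. Unwinding the gauge identification then shows that $(M_1^n,g_1,\nabla^{g_1}f_1)$ and $(M_2^n,g_2,\nabla^{g_2}f_2)$ are isometric outside a compact set, as claimed.
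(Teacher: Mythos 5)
Your high-level skeleton (gauge fixing, coupled system, Carleman inequality, conclude vanishing) matches the paper's, but there is a genuine gap at the decisive step, and it is precisely where the paper spends most of its effort. Your Carleman weight $e^{\lambda\varphi}$ with $\varphi$ built from powers of $f\sim r^2/4$ and a large parameter $\lambda$ multiplying it cannot be applied to the actual solution: even when $\Ricinf(g_2-g_1)=0$, the vanishing of the obstruction tensor only gives the little-o improvement $\Ric(g_2)-\Ric(g_1)=\textit{o}\left(v^{1-n/2}e^{-v}\right)$, so the admissibility requirement $\int e^{2\lambda\varphi}|U|^2\,e^{\pm f}d\mu<+\infty$ already fails for $\lambda$ beyond a finite threshold, and the limit $\lambda\to+\infty$ can never be taken. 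This is why the paper's weight $F_{\alpha}=\frac{v}{2}+\frac{\alpha-2\lambda+n/2}{2}\ln v$ keeps the exponential factor \emph{fixed} at $e^{v/2}$ and puts the large parameter $\alpha$ only in the polynomial exponent: admissibility then becomes a question of polynomial decay relative to the Gaussian rate. But that only shifts the burden: one must prove that the Ricci difference $h$ and the metric difference $H$ decay faster than $v^{-\alpha}e^{-v}$ for \emph{every} $\alpha$, and this is the content of theorem \ref{theo-main-S}, proved by an open/closed continuity argument in $\alpha$. There, the hypothesis on the obstruction tensor is first converted into a quantitative $\textit{O}(v^{-1})$ gain by integrating the first-order equation (\ref{eq-ric-II}) along the Morse flow of $f$ and invoking the Ornstein--Uhlenbeck regularity theorem \ref{a-prio-point-bd-orns-op}, and then the Carleman inequality itself (with the perturbed weights $F_{\alpha,\epsilon,s}$) is used iteratively to show the set of admissible weights is all of $\mathbb{R}$. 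Your sentence ``forces the whole unknown $U$ together with its derivatives to decay strictly faster than the generic rate'' conceals exactly this bootstrap; a little-o gain is nowhere near sufficient, and without theorem \ref{theo-main-S} the final step of your argument does not run.

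A second, structural divergence: the paper does not use a DeTurck gauge. Imposing a harmonic gauge on a noncompact end, with control of the gauge diffeomorphism at the Gaussian decay scale, is itself an elliptic solvability problem you would have to prove, and it risks circularity with the uniqueness statement. Instead, the paper uses the canonical gauge given by the Morse flows of the two potentials (pulling back by the $\phi_i$, both solitons share the same potential $f$), and accepts that only the Ricci difference $h:=\Ric(g_1)-\Ric(g_0)$ satisfies a genuinely elliptic equation (\ref{eq-ric-I}), while the metric difference $H:=g_1-g_0$ satisfies merely the first-order radial transport equation (\ref{eq-ric-II}). This elliptic-plus-transport structure is what allows the Carleman estimate of lemma \ref{lemm-cruc-C^0}, applied to $h$ alone, to control $H$ through proposition \ref{prop-control-rad-der}, and what lets the obstruction-tensor decay propagate from $h$ to $H$. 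Finally, a minor fixable point: your differential inequality $|\Delta_f U|\le C(|U|+|\nabla U|)$ requires either a gradient term on the good side of the Carleman inequality or the interpolation of proposition \ref{prop-cov-der-est-wei-nor}; the paper does the latter, but this is routine compared with the missing bootstrap.
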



The second main result is actually a corollary of theorem \ref{Main-theo-uni-inf-0} and deals with asymptotically Ricci flat expanding gradient Ricci solitons.

\begin{rk}
According to \cite{Der-Asy-Com-Egs}, it turns out that if an expanding gradient Ricci soliton $(M^n,g,\nabla^g f)$ is \textit{weakly} Ricci flat, i.e. if one only assumes $\lim_{r_p\rightarrow+\infty}r_p^2\Ric=0$ where $r_p$ denotes the distance function to a fixed point of the expander, then it is asymptotically conical as defined above and the convergence to the asymptotic cone is exponential at rate $\tau=n$. 
\end{rk}

The following proposition shows that the obstruction tensor defined previously is always well-defined in the setting of asymptotically Ricci flat Ricci expanders, i.e. is a smooth tensor on the section of the asymptotic cone.

\begin{prop}\label{prop-well-def-ric-tens-infty}
Let $(M^n,g,\nabla^g f)$ be a normalized asymptotically Ricci flat expanding gradient Ricci soliton. Then the limit $$\lim_{f\rightarrow +\infty}\left(f+\mu(g)\right)^{\frac{n}{2}-1}e^{f+\mu(g)}\Ric(g),$$ exists and defines a smooth tensor on the link of the asymptotic cone.
\end{prop}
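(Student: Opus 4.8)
The plan is to derive the sharp asymptotics of $\Ric(g)$ directly from the elliptic equation its Ricci tensor satisfies as a gradient Ricci soliton. Writing the soliton equation as $\Ric(g)+\Hess(-f)=-\tfrac12 g$, so that the soliton potential is $-f$ with constant $\lambda=-\tfrac12$, the standard soliton identity reads
\[
\Delta_f\,\Ric(g)=-\Ric(g)-2\,(\Rm(g)\ast\Ric(g)),\qquad \Delta_f:=\Delta+\nabla_{\nabla f},
\]
where $\Delta$ is the connection Laplacian and $(\Rm\ast\Ric)_{ij}:=R_{ikjl}R_{kl}$. The first step is to record this identity together with the decay estimates of \cite{Der-Asy-Com-Egs}: since the expander is asymptotically Ricci flat we have $f+\mu(g)=r^2/4$ on the end by \eqref{cond-exp-1}, the curvature converges to that of the Ricci flat asymptotic cone so that $\arrowvert\Rm(g)\arrowvert=O(r^{-2})$, and $\arrowvert\nabla^k\Ric(g)\arrowvert=O\big(r^{-(n-2)+k}e^{-r^2/4}\big)$ for every $k$. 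In particular the candidate limit
\[
S:=\big(f+\mu(g)\big)^{\frac n2-1}e^{\,f+\mu(g)}\,\Ric(g)
\]
and its covariant derivatives tangent to the level sets $\{f+\mu(g)=s\}$ are \emph{uniformly bounded} on the end.

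Next I would use $r$ (equivalently $s=f+\mu(g)=r^2/4$) as radial variable, trivialising the bundle of symmetric $2$--tensors along the radial geodesics of the cone, and substitute $\Ric(g)=c_n\,e^{-r^2/4}r^{-(n-2)}S$ (with $c_n=4^{\frac n2-1}$ the constant relating $(f+\mu)^{\frac n2-1}$ and $r^{n-2}$) into the identity above. The weight $(f+\mu(g))^{\frac n2-1}e^{f+\mu(g)}$ is chosen precisely so that the two \emph{a priori} leading contributions --- the $O(r^2)$ terms coming from $\Delta$ and from the radial drift $\tfrac r2\partial_r$ --- cancel, while the zeroth order term $-\Ric(g)$ is exactly absorbed by the constants produced by the weight and by the power $r^{-(n-2)}$. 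What remains is a radial ODE (with values in tensors on the link $X$) of the schematic form
\[
\partial_r^2 S-\Big(\tfrac r2+\tfrac{n-3}{r}\Big)\partial_r S+\tfrac1{r^2}\,P_X S+E=0,
\]
where $P_X:=\Delta_X+2\,\mathcal R_X$ is a fixed Lichnerowicz type operator on $X$ (the curvature term $\mathcal R_X$ arising jointly from the tangential Laplacian and from the quadratic contraction $\Rm\ast\Ric$, both of size $O(r^{-2})$), and $E$ collects genuinely lower order and exponentially small errors. Multiplying by the integrating factor $e^{-r^2/4}r^{-(n-3)}$ puts the principal part in divergence form,
\[
\frac{d}{dr}\Big[e^{-r^2/4}r^{-(n-3)}\,\partial_r S\Big]=-\,e^{-r^2/4}r^{-(n-3)}\Big(\tfrac1{r^2}P_X S+E\Big).
\]

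Since $S$ and its tangential derivatives are bounded, the right--hand side is $O\big(e^{-r^2/4}r^{-(n-1)}\big)$ and hence integrable on $[r_0,+\infty)$; integrating shows that $e^{-r^2/4}r^{-(n-3)}\partial_r S$ tends to a constant $B$, so $\partial_r S\sim B\,e^{r^2/4}r^{n-3}$. The a priori boundedness of $S$ forces $B=0$, i.e. it selects the convergent mode and discards the exponentially growing one; feeding $B=0$ back into the integrated identity gives $\arrowvert\partial_r S\arrowvert=O(r^{-3})$, which is integrable in $r$. Therefore $S(r,\cdot)$ converges uniformly on $X$, as $r\to+\infty$, to a symmetric $2$--tensor $S_\infty$ on the link, with $\arrowvert S-S_\infty\arrowvert=O(r^{-2})$. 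Running the same integrating--factor argument on $\nabla_X^k S$ for every $k$ (the needed uniform bounds being furnished by the estimates $f_{k+2}$), equivalently bootstrapping with interior Schauder estimates for the elliptic equation, upgrades the convergence to $C^\infty$ and shows $S_\infty$ is smooth. The main obstacle is precisely this middle step: one must carry out the cone computation carefully enough to isolate the exact cancellations and the integrating factor, verify that the non--decaying curvature term $\Rm\ast\Ric$ enters only at order $r^{-2}$ (affecting $P_X$ but not the dominant radial balance), and then exploit boundedness --- rather than any estimate on $\partial_r^2 S$, which the crude bounds do not control --- to kill the growing solution.
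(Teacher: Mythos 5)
Your overall strategy (rescale $\Ric(g)$ by the Gaussian weight, use the elliptic equation \eqref{equ:6}, and integrate radially to produce the limit) is the same as the paper's, but there is a genuine gap at the step you treat as a mere starting point. You claim that boundedness of $S=(f+\mu(g))^{\frac n2-1}e^{f+\mu(g)}\Ric(g)$ \emph{together with its derivatives tangent to the level sets of $f$} follows from the decay estimates $\arrowvert\nabla^k\Ric(g)\arrowvert=\textit{O}\big(r^{-(n-2)+k}e^{-r^2/4}\big)$ of \cite{Der-Asy-Com-Egs}. Boundedness of $S$ itself does follow ($k=0$), but the derivative claim does not: those estimates are isotropic and \emph{lose} a factor $r$ per derivative (the loss produced by differentiating the Gaussian weight), so they cannot distinguish tangential from radial directions. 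Since $f$ is constant on its level sets, for a $g$-unit tangential vector $e$ one gets only $\arrowvert\nabla_eS\arrowvert=(f+\mu(g))^{\frac n2-1}e^{f+\mu(g)}\arrowvert\nabla_e\Ric(g)\arrowvert=\textit{O}(r)$, and for the link-scale operator $P_X$ entering your ODE one gets only $\arrowvert P_XS\arrowvert=\textit{O}(r^4)$ (each $g_X$-unit derivative costs a factor $r^2$: one $r$ from the length of the vector, one from the derivative loss in the quoted estimates). Feeding this into your integrated identity, the right-hand side is $\textit{O}\big(e^{-r^2/4}r^{-(n-5)}\big)$, whose tail integral gives back only $\arrowvert\partial_rS\arrowvert=\textit{O}(r)$, i.e.\ no information; the same defect invalidates the smoothness step, where "the needed uniform bounds being furnished by the estimates $f_{k+2}$" in fact furnishes only $\arrowvert\nabla_X^kS\arrowvert=\textit{O}(r^{2k})$.

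What is missing is exactly the content of theorem \ref{a-prio-point-bd-orns-op}, which is what the paper's own proof consists of: apply it to equation \eqref{equ:6} with $h:=\Ric(g)$, $\lambda=1$, $V_0:=\Rm(g)$, $V_1\equiv0$, $Q\equiv0$. That theorem upgrades $\sup_M\arrowvert h_1\arrowvert<+\infty$, where $h_1:=v^{\frac n2-1}e^v\Ric(g)$, to $\sup_Mv^{k/2}\arrowvert\nabla^kh_1\arrowvert<+\infty$ for every $k$, i.e.\ each covariant derivative \emph{gains} a factor $v^{-1/2}$ instead of losing one; it is proved by a Shi/Bernstein-type maximum principle adapted to the Ornstein--Uhlenbeck operator $\Delta_{-f}$, with cutoffs $\eta(v/t)$ and the inequality $\Delta_{-f}\psi\geq -C/t$. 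Your proposed substitute, "bootstrapping with interior Schauder estimates", does not repair the gap: on unit balls the drift term $\nabla_{\nabla f}$ has coefficients of size $r$, so each Schauder application loses precisely the factor the maximum principle is designed to recover. Once theorem \ref{a-prio-point-bd-orns-op} is granted, your radial ODE analysis --- whose algebra is correct: the exact cancellation of the zeroth-order term, the integrating factor $e^{-r^2/4}r^{-(n-3)}$, the exclusion of the growing mode $B\neq0$, and the rate $\arrowvert S-S_\infty\arrowvert=\textit{O}(r^{-2})$ --- does conclude, and it is essentially an elaboration of what the paper does (cf.\ the proof of claim \ref{S-non-emp}): the rescaled equation \eqref{ell-eq-resc-sol} plus the derivative bounds give $\nabla_{\nabla f}h_1=\Delta h_1-W_0\ast h_1-W_1\ast\nabla h_1=\textit{O}(v^{-1})$, and integrating along the Morse flow of $\nabla f/\arrowvert\nabla f\arrowvert^2$ yields the limit, with smoothness coming from the bounds on all rescaled derivatives. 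In short: you assumed the hard half (the derivative estimates) and carefully re-derived the easy half (the radial integration).
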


\begin{defn}
Let $(M^n,g,\nabla^g f)$ be an asymptotically Ricci flat expanding gradient Ricci soliton. Then the Ricci curvature and the scalar curvature at infinity of $(M^n,g,\nabla^g f)$ are defined by 
\begin{eqnarray}
\Ricinf(g):=\lim_{f\rightarrow +\infty}\left(f+\mu(g)\right)^{\frac{n}{2}-1}e^{f+\mu(g)}\Ric(g),\\
\Rinf(g):=\lim_{f\rightarrow +\infty}\left(f+\mu(g)\right)^{\frac{n}{2}-1}e^{f+\mu(g)}\R(g).
\end{eqnarray}
\end{defn}

Therefore, we can reformulate theorem \ref{Main-theo-uni-inf-0} in a cleaner way in the setting of asymptotically Ricci flat expanders.

\begin{theo}\label{Main-theo-uni-inf-1}
Let $(M_i^n,g_i,\nabla^{g_i}f_i)_{i=1,2}$ be two expanding gradient Ricci solitons with isometric Ricci flat asymptotic cones. Then $(M_1^n,g_1,\nabla^{g_1}f_1)$ and $(M_2^n,g_2,\nabla^{g_2}f_2)$ are isometric outside a compact set if they have the same Ricci curvature at infinity, i.e. if ${\phi_2^1}_*\Ricinf(g_1)=\Ricinf(g_2)$.
\end{theo}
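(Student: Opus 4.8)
The plan is to deduce Theorem \ref{Main-theo-uni-inf-1} directly from Theorem \ref{Main-theo-uni-inf-0} by showing that, for two asymptotically Ricci flat expanders, the hypotheses of the latter are automatically satisfied and that the obstruction $\Ricinf(g_2-g_1)$ of Definition \ref{defn-ric-tens-infty-dif} agrees, up to a harmless positive constant, with the difference $\Ricinf(g_2)-(\phi_2^1)_*\Ricinf(g_1)$ of the intrinsic Ricci curvatures at infinity.

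First I would verify $\HypI$. Since each asymptotic cone is Ricci flat, $\Ric(g_{C(X_i)})=0$, so $\sup_{\{1\}\times X_i}\arrowvert\Ric(g_{C(X_i)})\arrowvert=0\leq\epsilon$ for every $\epsilon>0$; equivalently, the exponential decay of $\Ric(g_i)$ at rate $\tau=n$ forces $\limsup_{+\infty}f_i\arrowvert\Ric(g_i)\arrowvert=0$. Thus the pinching hypothesis is vacuous in this setting, and the two expanders are legitimate inputs for Theorem \ref{Main-theo-uni-inf-0}.

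Next I would identify the two obstruction quantities. By Proposition \ref{prop-well-def-ric-tens-infty}, the limits defining $\Ricinf(g_1)$ and $\Ricinf(g_2)$ exist and are smooth symmetric $2$-tensors on the links $X_1$ and $X_2$. The crux is the matching of normalizations. Along the diffeomorphism $\phi_i$, condition \eqref{cond-exp-1} gives $f_i+\mu(g_i)=r^2/4$, whence
$$\left(f_i+\mu(g_i)\right)^{\frac{n}{2}-1}e^{f_i+\mu(g_i)}=\frac{1}{2^{n-2}}\,r^{n-2}e^{r^2/4}.$$
Substituting this into the definition of $\Ricinf(g_i)$ shows that the radial rescaling $r^{n-2}e^{r^2/4}(\phi_i)_*\Ric(g_i)|_{\{r\}\times X_i}$ converges to $2^{n-2}\Ricinf(g_i)$. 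Feeding both expanders into Definition \ref{defn-ric-tens-infty-dif} and using linearity of the limit then yields
$$\Ricinf(g_2-g_1)=2^{n-2}\left(\Ricinf(g_2)-(\phi_2^1)_*\Ricinf(g_1)\right),$$
which is a well-defined smooth symmetric $2$-tensor on $X_2$. In particular its vanishing is equivalent to the stated condition $(\phi_2^1)_*\Ricinf(g_1)=\Ricinf(g_2)$, and Theorem \ref{Main-theo-uni-inf-0} then delivers the conclusion.

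The only genuinely delicate point in this reduction is the bookkeeping around the cone isometry $\phi_2^1$: one must check that pushing forward the intrinsic limit $\Ricinf(g_1)$ from $X_1$ to $X_2$ coincides with the limit of the transported tensors $(\phi_2^1)_*(\phi_1)_*\Ric(g_1)|_{\{r\}\times X_2}$ that actually appears in Definition \ref{defn-ric-tens-infty-dif}. This should follow because $\phi_2^1$ preserves the cone structure, hence commutes with the radial dilations $r\mapsto\lambda r$ and with the restriction to the slices $\{r\}\times X$, so it commutes with the limit; I would spell out this compatibility explicitly, as it is the one place where the hypothesis that the cones are \emph{isometric}, and not merely diffeomorphic, genuinely enters.
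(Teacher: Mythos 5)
Your reduction is exactly the paper's: Theorem \ref{Main-theo-uni-inf-1} is presented there as a direct corollary of Theorem \ref{Main-theo-uni-inf-0} combined with Proposition \ref{prop-well-def-ric-tens-infty}, with $\HypI$ holding trivially (indeed with $\A^0_{g_i}(\Ric(g_i))=0$) because the asymptotic cones are Ricci flat. Your explicit bookkeeping — the factor $2^{n-2}$ coming from $f_i+\mu(g_i)=r^2/4$ and the compatibility of the limit with the cone isometry $\phi_2^1$ — merely spells out what the paper leaves implicit, and it is correct.
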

By analogy with conformally compact Einstein manifolds, we ask for the dependence of the Ricci curvature at infinity on the metric $g$. We do not have a definitive answer but we compute it for some examples. In dimension $2$, one has a complete understanding of this obstruction tensor. We summarize some computations in the following lemma : 
\begin{lemma}\label{com-ric-cur-inf}
\begin{itemize}
\item If $(\Sigma^2,g,\nabla^g f)$ is a normalized asymptotically conical expanding gradient Ricci soliton. Then,
\begin{eqnarray*}
\Rinf(g)=\left(\min_Mf+\mu(g)\right)e^{\min_Mf+\mu(g)}.\\
\end{eqnarray*}
\item If $(L^{-k},g_{k})$ is the Feldman-Ilmanen-Knopf example coming out of the metric cone $(C(\mathbb{S}^{2n-1}/\mathbb{Z}_k), i\partial\bar{\partial}\arrowvert\cdot\arrowvert^2)$, where $L$ is the canonical line bundle over the complex projective space $\mathbb{CP}^{n-1}$, with $k>n$, then
\begin{eqnarray*}
\Rinf(g)=e^{\mu(g)+k-n}(k-n)^n,\quad n=\dim_{\mathbb{C}}L^{-k}.
\end{eqnarray*}

\end{itemize}
\end{lemma}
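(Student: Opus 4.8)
The plan is to treat the two assertions separately: the surface case is forced by exact first integrals of the soliton equation and needs no asymptotic analysis, whereas the Feldman--Ilmanen--Knopf case is an explicit integration of the underlying radial ODE. I would begin with two identities valid in every dimension. Writing the soliton equation as $\Ric(g)=\Hess f-\tfrac12 g$ and tracing gives $\Delta f=\R(g)+\tfrac n2$. Combining the contracted second Bianchi identity with the commutation formula $\div\Hess f=d(\Delta f)+\Ric(\nf,\cdot)$ then yields $\Ric(\nf,\cdot)=-\tfrac12\,d\R$. Substituting this into $d\arrowvert\nabla f\arrowvert^{2}=2\Hess f(\nf,\cdot)=2\Ric(\nf,\cdot)+df$ gives $d\big(\R+\arrowvert\nabla f\arrowvert^{2}-f\big)=0$, so $\R+\arrowvert\nabla f\arrowvert^{2}-f$ is constant on $M$. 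To pin the constant I pass to the end: by the normalization \eqref{cond-exp-1} one has $f+\mu(g)=r^{2}/4$, so $\arrowvert\nabla f\arrowvert^{2}\to r^{2}/4$ while $\R\to0$, forcing $\R+\arrowvert\nabla f\arrowvert^{2}-f\equiv\mu(g)$. Evaluating at a critical point $p$ of $f$ (which exists since $f$ is proper and bounded below) records the value $\R(p)=\min_M f+\mu(g)$.

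For a surface the extra input is $\Ric=\tfrac{\R}{2}g$, so the identity $\Ric(\nf,\cdot)=-\tfrac12 d\R$ becomes $d(\R e^{f})=e^{f}(d\R+\R\,df)=0$; thus $\R e^{f}\equiv C$ is constant. Since $n=2$ the weight $(f+\mu(g))^{n/2-1}$ equals $1$, whence $\Rinf(g)=\lim_{f\to\infty}e^{f+\mu(g)}\R=Ce^{\mu(g)}$. Evaluating $\R e^{f}$ at $p$ and inserting $\R(p)=\min_M f+\mu(g)$ gives $C=(\min_M f+\mu(g))e^{\min_M f}$, hence $\Rinf(g)=(\min_M f+\mu(g))e^{\min_M f+\mu(g)}$, as stated.

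For the Feldman--Ilmanen--Knopf expander on the total space of $\mathcal{O}(-k)\to\mathbb{CP}^{n-1}$ I would use the $U(n)$-invariant Calabi ansatz, encoding the Kähler metric through its momentum profile $\varphi$ on a moment interval $[x_0,\infty)$, with $x_0$ the zero section $\mathbb{CP}^{n-1}$ and the usual smoothness condition at $x_0$; the expander equation reduces to a first-order ODE for $\varphi$ in which the Einstein constant of the Fubini--Study base and the degree $-k$ (through the $\mathbb{Z}_k$ quotient) are the only parameters. Both $\R$ and $f$ are then explicit in $\varphi$, and the normalization $f+\mu(g)=r^{2}/4$ fixes the change of variables to the cone radius. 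Conceptually, $\R$ solves the weighted equation $(\Delta+\nf\cdot\nabla+1)\R=-2\arrowvert\Ric\arrowvert^{2}$, whose source decays like $e^{-r^{2}/2}$, so at the relevant rate $\R$ behaves like the decaying solution $r^{2-n}e^{-r^{2}/4}$ of the homogeneous operator; the weight $(f+\mu(g))^{n/2-1}e^{f+\mu(g)}$ in the definition of $\Rinf$ is precisely the inverse of this mode, and $\Rinf(g)$ is read off as its coefficient. I expect the decay exponent of the linearized profile at infinity to be governed by $k-n$, which produces the factor $(k-n)^{n}$, while $e^{\mu(g)+k-n}$ collects the entropy together with the value of $f$ on the zero section.

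The main obstacle is the second item. Unlike the surface case, which is dictated by the first integrals and requires only the identification of the constant with $\mu(g)$, the Feldman--Ilmanen--Knopf computation needs the precise subleading asymptotics of the momentum profile rather than mere exponential decay, together with careful bookkeeping of the Kähler and $\mathbb{Z}_k$ normalizations, so that the constants $(k-n)^{n}$ and $e^{k-n}$ emerge exactly.
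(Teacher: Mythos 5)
Your treatment of the surface case is correct and is essentially the paper's own argument: the paper likewise combines the soliton identity $\nabla^g\R_g+2\Ric(g)(\nabla^g f)=0$ with $\Ric(g)=(\R_g/2)g$ to conclude $\nabla^g(e^f\R_g)=0$, and then evaluates the resulting constant at the minimum point of $f$ via $\arrowvert\nabla^g f\arrowvert^2+\R_g=f+\mu(g)$. Your extra steps (deriving the traced soliton identity, the Bianchi-type identity, and identifying the constant in the Hamilton identity with $\mu(g)$ through the normalization at infinity) are material the paper simply quotes from its appendix (lemma \ref{id-EGS}); otherwise the two proofs coincide, including the observation that the weight $(f+\mu(g))^{n/2-1}$ is trivial when $n=2$.

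The genuine gap is the second item. What you offer there is a programme, not a proof: the framework is right (the Feldman--Ilmanen--Knopf metrics do come from the $U(n)$-invariant Calabi ansatz, with the expander equation reducing to a first-order ODE for the momentum profile), but you never integrate the ODE, never extract the subleading asymptotics, and the two constants the lemma actually asserts, $(k-n)^n$ and $e^{\mu(g)+k-n}$, enter your text only through the phrase ``I expect.'' Nothing in your argument excludes a different power of $(k-n)$ or an extra numerical factor; the subleading behaviour of the profile---which is exactly what $\Rinf(g)$ records once the universal mode $v^{1-n/2}e^{-v}$ is divided out---is precisely the part left uncomputed, as you yourself acknowledge. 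For comparison, the paper does not carry out this computation either: it attributes it to Siepmann [Remark $3.3.4$, \cite{Sie-PHD}] and stops there. To close your proof you must either perform the explicit expansion of the FIK momentum profile (express $f$ and $\R_g$ in the momentum coordinate, impose the normalization $f+\mu(g)=r^2/4$, and read off the coefficient of $(f+\mu(g))^{1-n/2}e^{-(f+\mu(g))}$ in $\R_g$), or else do as the paper does and invoke Siepmann's computation as a citation.
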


As a preliminary result to theorem \ref{Main-theo-uni-inf-0}, we investigate the sharp decay of eigentensors associated to the weighted laplacian on asymptotically conical expanders satisfying other asymptotic conditions.\\

$\HypII$ consists of one of the following conditions : \\
\begin{enumerate}
\item $(X,g_X)$ is Einstein and $\R_{g_X}>-(n-1)(n-2)$, i.e. $\R_{g_{C(X)}}>0$, \label{con-1-hyp-I}\\
\item Nonnegative Ricci curvature at infinity, i.e. $\Ric(g)\geq 0$ outside a compact set, \label{con-1-hyp-II}\\
\item $\dim M^n=n=3$ and nonnegative scalar curvature at infinity, i.e. $\R_g\geq 0$ outside a compact set. \label{con-1-hyp-III}
\end{enumerate}

The third main result of this paper is then : 

\begin{theo}\label{theo-sharp-dec-eig-tens-I}
Let $(M^n,g,\nabla^gf)$ be an asymptotically conical expanding gradient Ricci soliton satisfying $\HypI$ or $\HypII$. Assume $h$ is a smooth tensor satisfying, for some $\lambda\in\mathbb{R}$,
\begin{eqnarray}\label{ell-equ-sharp-dec-eig-tens-I}
&&-\Delta_fh-\lambda h=V_1\ast h+V_2\ast\nabla h,\quad h=\textit{o}\left(f^{\lambda-\frac{n}{2}}e^{-f}\right),
\end{eqnarray}
 where,
\begin{eqnarray*}
&& \nabla^kV_1=\textit{O}(f^{-1-k/2}), \quad k\in\{0,1,2\},\\
&& \nabla^kV_2=\textit{O}(f^{-3/2-k/2}),\quad k\in\{0,1,2\},\\
\end{eqnarray*}
and where, if $A$ and $B$ are two tensors, $A\ast B$ denotes any linear combination of contractions of the tensorial product of $A$ and $B$.
Then $h\equiv 0$.
\end{theo}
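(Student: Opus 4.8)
The plan is to prove this as a unique continuation statement for the weighted Laplacian $\Delta_f$ by means of Carleman inequalities, together with a classical interior strong unique continuation argument to propagate the vanishing inward. The starting observation is that, by \eqref{cond-exp-1}, the function $r:=2\sqrt{f+\mu(g)}$ is an honest radial coordinate on the conical end, along which $g$ is a fast perturbation of $dr^2+r^2g_X$, and $\Delta_f$ is, to leading order, the drift Laplacian of the Gaussian cone. A separation of variables on the exact cone shows that the genuine eigentensors of $-\Delta_f$ at eigenvalue $\lambda$ decay exactly at the rate $f^{\lambda-n/2}e^{-f}$ (the ``indicial'' decay); the hypothesis $h=\textit{o}(f^{\lambda-n/2}e^{-f})$ says precisely that $h$ decays strictly faster than the slowest admissible mode, and the conclusion is that no nonzero solution can do so. I would therefore aim to show first that $h\equiv 0$ on the end $\{f>T\}$ for $T$ large, and then that this forces $h\equiv 0$ on all of $M$.

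For the vanishing on the end, I would establish a family of Carleman inequalities for the operator $L:=\Delta_f+\lambda$ acting on symmetric $2$-tensors, indexed by a large parameter $\alpha$, with weights built from $f^{-\alpha}$ and $e^{\alpha f}$. Working in the weighted space $L^2(e^{-f}\,d\mu)$, in which $\Delta_f$ is self-adjoint, the scheme has two stages. A polynomial-weight inequality, roughly $\alpha\int f^{-2\alpha}|u|^2e^{-f}\,d\mu\lesssim \int f^{-2\alpha}|Lu|^2e^{-f}\,d\mu$ plus a favourable gradient term, is first applied to $u=h$: the Gaussian factor $e^{-f}$ already present in the a priori bound makes every integral and every outer boundary term converge, and the strictness of the $\textit{o}(\cdot)$ lets one peel off the indicial coefficients one eigenmode at a time, upgrading the decay of $h$ to faster than any power of $f$ times $e^{-f}$. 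A second inequality with the exponential weight $e^{\alpha f}$ then converts this super-polynomial decay into $h\equiv 0$ on $\{f>T\}$: the super-polynomial decay just obtained is exactly what guarantees that the outer boundary contribution, weighted by $e^{2\alpha f}$, still tends to zero as the cutoff radius diverges. In both stages the lower-order terms are harmless: since $|V_1|=\textit{O}(f^{-1})$ and $|V_2|=\textit{O}(f^{-3/2})$, the right-hand side $V_1\ast h+V_2\ast\nabla h$ contributes $\textit{O}(f^{-2})|h|^2+\textit{O}(f^{-3})|\nabla h|^2$, whose coefficients tend to $0$ on the end and are dominated by the $\alpha$-gain of the Carleman inequality once $\alpha$ and $T$ are large.

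The analytic heart, and the step I expect to be the main obstacle, is the proof of these Carleman inequalities, in particular securing the positivity of the bulk commutator term for a tensor-valued equation. Conjugating $L$ by the weight and splitting the conjugated operator into its self-adjoint and skew-adjoint parts, the positive contribution comes from the commutator of these parts; integrating by parts against $e^{-f}\,d\mu$ and using the soliton identity $\Hess f=\Ric+g/2$ converts the Hessian-of-weight terms into curvature-plus-identity terms. For a $2$-tensor the comparison between the rough Laplacian and $\Delta_f$ also produces a Weitzenb\"ock term of type $\Rm\ast h$, and it is precisely the control of this curvature term that requires the asymptotic hypotheses: under $\HypI$ the pinching $\A^0_g(\Ric(g))\leq\epsilon$ keeps it small enough to be absorbed, while under $\HypII$ the nonnegativity of the Ricci curvature (or, in dimension $3$, of the scalar curvature, or the positivity of $\R_{g_{C(X)}}$) gives it a favourable sign. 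Matching the weight exponent to the indicial rate $f^{\lambda-n/2}e^{-f}$, so that the strict little-$o$ decay kills the boundary terms while the curvature term retains the right sign, is the delicate balancing point of the argument.

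Finally, once $h$ vanishes on the open end $\{f>T\}$, I would conclude $h\equiv 0$ on the connected manifold $M$ by the classical strong unique continuation property: the equation can be written as $|\Delta h|\leq C(|h|+|\nabla h|)$ on compact subsets, the drift term $-\langle\nf,\nabla h\rangle$ and the potentials being first order with locally bounded coefficients, so Aronszajn's theorem for second-order elliptic systems with scalar principal part forces $h\equiv 0$ from its vanishing on an open set. This yields $h\equiv 0$ as claimed.
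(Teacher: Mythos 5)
Your proposal has the right skeleton, and it matches the paper's strategy in spirit: Donnelly-style commutator (``virial'') Carleman estimates for $\Delta_f$, the curvature term in the commutator controlled by the pinching of $\HypI$ or the sign conditions of $\HypII$, a two-stage scheme (upgrade the decay, then force vanishing on the end), and an interior unique continuation argument to finish --- your appeal to Aronszajn is fine and in fact more robust than the paper's appeal to analyticity of the coefficients. However, the weights you propose are structurally wrong, and this is a genuine gap rather than a normalization issue. (Minor point first: $\Delta_f=\Delta+\nabla_{\nabla f}$ is symmetric with respect to $e^{+f}d\mu$, not $e^{-f}d\mu$.) The serious problem is your second stage: with the exponential weight $e^{\alpha f}$ and $\alpha\to\infty$, the argument cannot run. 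What your first stage can possibly deliver is $|h|\leq C_N f^{-N}e^{-f}$ for every $N$, i.e.\ a super-polynomial gain over \emph{one fixed} Gaussian; but then $e^{2\alpha f}|h|^2$, and the outer boundary terms at $\{f=R\}$, behave like $f^{-2N}e^{(2\alpha-c)f}$ with $c$ fixed, which diverges as $R\to\infty$ for every $\alpha$ beyond a finite threshold --- no polynomial gain beats an exponentially growing weight. So the claim that the super-polynomial decay ``guarantees that the outer boundary contribution, weighted by $e^{2\alpha f}$, still tends to zero'' is false; to use $e^{\alpha f}$ for all $\alpha$ you would need decay faster than every exponential, which is essentially the conclusion you are trying to prove. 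This is precisely why the paper's Carleman weight is $e^{F_\alpha}$ with $F_\alpha=\frac{v}{2}+\frac{\alpha-2\lambda+n/2}{2}\ln v$: a single fixed exponential $e^{v/2}$, with the large parameter carried by the \emph{polynomial} factor $v^{\alpha/2}$, so that $he^{F_\alpha}\in L^2(d\mu_f)$ for all $\alpha$ is exactly ``Gaussian times super-polynomial'' decay, and vanishing outside a compact set follows by letting $\alpha\to\infty$ at fixed exponential. Your first-stage weight $f^{-2\alpha}$ has the opposite defect: it \emph{decays} at infinity, so as $\alpha\to\infty$ it concentrates all information at the inner boundary of the end and yields nothing about vanishing at infinity.

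The second gap is the seeding of the bootstrap. You never explain how the purely qualitative hypothesis $h=\textit{o}\left(f^{\lambda-n/2}e^{-f}\right)$ is converted into membership in even one weighted $L^2$ space; ``peeling off the indicial coefficients one eigenmode at a time'' presupposes exact separation of variables, which is unavailable on a metric that is only asymptotically conical and in the presence of the perturbation $V_1\ast h+V_2\ast\nabla h$. In the paper this seeding is a theorem in its own right: one rescales $h_{\lambda}:=v^{n/2-\lambda}e^{v}h$, observes that it satisfies an equation for the Ornstein--Uhlenbeck operator $\Delta_{-f}$, proves Shi-type weighted derivative estimates for such equations (theorem \ref{a-prio-point-bd-orns-op}), and integrates the resulting gradient bound along the Morse flow of $\nabla f/\arrowvert\nabla f\arrowvert^2$ to upgrade $h_{\lambda}=\textit{o}(1)$ to $h_{\lambda}=\textit{O}(v^{-1})$, which gives the first admissible weight. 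The passage to \emph{all} polynomial weights is then an open-and-closed argument in the exponent $\alpha$ (theorem \ref{theo-sharp-dec-eig-tens}), whose openness step requires Donnelly's $\arctan$-modified weights $F_{\alpha,\epsilon,s}$ rather than a formal iteration. Without the correct weight structure and without this seeding/openness mechanism, your proof does not close.
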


As an immediate consequence of theorem \ref{theo-sharp-dec-eig-tens-I}, one gets sharp decay of eigentensors associated to the Lichnerowicz operator acting on symmetric $2$-tensors or the weighted Hodge-DeRham laplacian acting on differential forms (also called the Witten laplacian) : 
\begin{coro}
Let $(M,g,\nabla^gf)$ be an asymptotically conical expanding gradient Ricci soliton satisfying $\HypI$ or $\HypII$. Assume $h$ is a tensor satisfying 
\begin{eqnarray*}
\Delta_fh+\Rm(g)\ast h=-\lambda h,\quad h\in L^2(e^fd\mu_g)\setminus\{0\},
\end{eqnarray*}
for some $\lambda\in\mathbb{R}$.
Then,
\begin{eqnarray*}
0<\limsup_{+\infty}f^{n/2-\lambda}e^f\arrowvert h\arrowvert\leq \| f^{n/2-\lambda}e^f h\|_{L^{\infty}}<+\infty.
\end{eqnarray*}
\end{coro}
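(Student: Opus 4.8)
The statement has three pieces: the middle inequality $\limsup_{+\infty} f^{n/2-\lambda}e^{f}|h|\le \|f^{n/2-\lambda}e^{f}h\|_{L^{\infty}}$ is tautological, and near the compact core $f^{n/2-\lambda}e^{f}|h|$ is bounded by smoothness of $h$ (working with $f+\mu(g)>0$ on the end to make the power meaningful). So the content is the sharp upper bound $\|f^{n/2-\lambda}e^{f}h\|_{L^{\infty}}<+\infty$ and the non-degeneracy $\limsup_{+\infty} f^{n/2-\lambda}e^{f}|h|>0$. The first step in both is to put the eigentensor equation into the shape required by Theorem~\ref{theo-sharp-dec-eig-tens-I}. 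Rewriting $\Delta_{f}h+\Rm(g)\ast h=-\lambda h$ as $-\Delta_{f}h-\lambda h=\Rm(g)\ast h$, I read off $V_{1}=\Rm(g)$ and $V_{2}=0$. The decay hypotheses $\nabla^{k}V_{1}=\textit{O}(f^{-1-k/2})$ for $k\in\{0,1,2\}$ are \emph{not} extra assumptions: on an asymptotically conical expander the curvature converges to that of the cone, which scales like $r^{-2}\sim f^{-1}$, so $|\Rm(g)|=\textit{O}(f^{-1})$, $|\nabla\Rm(g)|=\textit{O}(f^{-3/2})$ and $|\nabla^{2}\Rm(g)|=\textit{O}(f^{-2})$ follow from Definition~\ref{defn-asy-con-egs} alone, independently of $\HypI$ or $\HypII$.

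For the \emph{lower bound} — the part that is genuinely ``an immediate consequence'' — I argue by contraposition. If $\limsup_{+\infty} f^{n/2-\lambda}e^{f}|h|=0$, then $h=\textit{o}(f^{\lambda-n/2}e^{-f})$, which is exactly the smallness required in \eqref{ell-equ-sharp-dec-eig-tens-I}. With $V_{1}=\Rm(g)$ and $V_{2}=0$ as above, Theorem~\ref{theo-sharp-dec-eig-tens-I} applies and forces $h\equiv 0$, contradicting $h\in L^{2}(e^{f}d\mu_{g})\setminus\{0\}$. Hence $\limsup_{+\infty} f^{n/2-\lambda}e^{f}|h|>0$. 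All the difficulty of this direction is thus absorbed into the Carleman machinery behind Theorem~\ref{theo-sharp-dec-eig-tens-I}.

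For the \emph{upper bound} I would run an a priori decay/barrier argument for the drift Laplacian. First, the membership $h\in L^{2}(e^{f}d\mu_{g})$ together with interior elliptic (Moser) estimates on unit balls gives $|h|\to 0$ at infinity and, more quantitatively, pointwise bounds controlled by the $L^{2}$ tails, which already forces $h$ into the exponentially decaying régime. A Kato inequality then turns the equation into the scalar differential inequality $(\Delta_{f}+\lambda+Cf^{-1})|h|\ge 0$ on $\{f\ge t_{0}\}$, using $|\langle\Rm(g)\ast h,h\rangle|\le Cf^{-1}|h|^{2}$. Using the normalization identities $\Delta f=\R+\tfrac n2$ and $|\nabla f|^{2}=f+\mu(g)-\R$, a direct computation shows $\Delta_{f}(f^{a}e^{-f})=\big(-(a+\tfrac n2)f^{a}+a(a+\tfrac n2-1)f^{a-1}\big)e^{-f}+\text{l.o.t.}$, so that $\psi=f^{\lambda-n/2}e^{-f}$ is, to leading order, annihilated by $\Delta_{f}+\lambda$ and is the natural barrier. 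Comparing $|h|$ with $A\psi$ on $\{f\ge t_{0}\}$ via the maximum principle (matching on $\{f=t_{0}\}$ and using $|h|\to0$ at infinity) yields $|h|\le A f^{\lambda-n/2}e^{-f}$, i.e. the claimed finiteness.

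The main obstacle is the \emph{sharpness of the constant} in the upper bound. With the crude barrier $f^{\lambda-n/2+\delta}e^{-f}$, $\delta>0$, the leading coefficient $-(a+\tfrac n2)+\lambda=-\delta$ has a good sign and the comparison is immediate, but this only gives $f^{n/2-\lambda}e^{f}|h|=\textit{O}(f^{\delta})$, which is \emph{not} bounded; reaching $\delta=0$ requires resolving the borderline case where the leading terms cancel and one must control the subleading $f^{a-1}$ coefficient, for instance by refining the barrier to $\psi=(f^{\lambda-n/2}+Bf^{\lambda-n/2-1})e^{-f}$ and checking the sign of the resulting zeroth-order term, together with a careful justification of the maximum principle at infinity for $\Delta_{f}+\lambda+Cf^{-1}$ (whose zeroth-order part is not signed). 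This sharp comparison, rather than the application of Theorem~\ref{theo-sharp-dec-eig-tens-I}, is where the real analytic work lies.
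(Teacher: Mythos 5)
Your reduction and your proof of the lower bound are exactly the paper's route: rewrite the equation as $-\Delta_fh-\lambda h=\Rm(g)\ast h$, note that asymptotic conicality alone gives $\nabla^k\Rm(g)=\textit{O}(f^{-1-k/2})$ for $k\in\{0,1,2\}$, and argue by contraposition: if $\limsup_{+\infty}f^{n/2-\lambda}e^f\arrowvert h\arrowvert=0$, then $h=\textit{o}\left(f^{\lambda-n/2}e^{-f}\right)$, so theorem \ref{theo-sharp-dec-eig-tens-I} with $V_1=\Rm(g)$, $V_2=0$ forces $h\equiv 0$, contradicting $h\in L^2(e^fd\mu_g)\setminus\{0\}$. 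This is precisely the sense in which the corollary is ``immediate''.

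The genuine gap is the upper bound $\|f^{n/2-\lambda}e^fh\|_{L^{\infty}}<+\infty$. The paper does not obtain it from a barrier argument: it is the dichotomy recalled in the introduction around \eqref{eigen-pb-wei-lap} and established in \cite{Der-Asy-Com-Egs} --- an eigentensor of $\Delta_f$ either behaves like $f^{-\lambda}$ or like $f^{\lambda-n/2}e^{-f}$ at infinity, and only the second behavior is compatible with membership in $L^2(e^fd\mu_g)$. Your maximum-principle comparison cannot be closed as described, and not only in the borderline case $\delta=0$ that you flag. On the noncompact end $\{f\ge t_0\}$, comparing $\arrowvert h\arrowvert$ with $A\psi$ requires an admissible condition at infinity: the quotient $w=\arrowvert h\arrowvert/\psi$ satisfies a clean maximum principle (the zeroth-order term drops out after dividing by a positive supersolution), but only once one knows $\limsup_{+\infty}w<+\infty$. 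The sole a priori information extractable from $h\in L^2(e^fd\mu_g)$ by Moser iteration on unit balls is $\arrowvert h\arrowvert=\textit{o}\left(e^{-f/2+C\sqrt{f}}\right)$ --- exponential decay of rate $1/2$, not $1$ --- whereas every barrier in your scheme, including the relaxed one $f^{\lambda-n/2+\delta}e^{-f}$ and the two-term refinement, decays like $e^{-f}$. Hence $\arrowvert h\arrowvert/\psi$ is a priori unbounded at infinity and the comparison yields nothing; in particular your claim that the case $\delta>0$ is ``immediate'' is incorrect, since the obstruction is not the sign of the zeroth-order coefficient (which the refined barrier can indeed arrange) but the missing control at infinity. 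What is actually needed is the exclusion of all intermediate decay rates between $e^{-f/2}$ and $f^{\lambda-n/2}e^{-f}$, i.e. the two-behavior analysis of \cite{Der-Asy-Com-Egs}; no refinement of the barrier alone recovers it, as the scalar differential inequality $(\Delta_f+\lambda+Cf^{-1})\arrowvert h\arrowvert\ge 0$ admits solutions of the model problem with merely polynomial decay, so the $L^2(e^fd\mu_g)$ hypothesis must enter in a stronger way than through the crude pointwise bound.
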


Both proofs of theorem \ref{Main-theo-uni-inf-0} and \ref{theo-sharp-dec-eig-tens-I} are based on an adaptation due to Donnelly \cite{Don-Ale-Spe} of the proof of the "virial" theorem in quantum mechanics in a Riemannian setting. The main argument consists in establishing the so called Carleman inequalities for the weighted laplacian $\Delta_f:=\Delta+\nabla f$ associated to an expanding gradient Ricci soliton $(M^n,g,\nabla^g f)$ : see lemma \ref{lemm-cruc-C^0}. 

To give a first feeling of the proof, let us consider the eigenvalue problem 
\begin{eqnarray}
&&-\Delta_fh+\lambda h=0,\label{eigen-pb-wei-lap}\\
&&\quad h \in L^2(d\mu_f),\quad\lambda\in\mathbb{R},
\end{eqnarray}
where $d\mu_f:=e^fd\mu_g.$
Recall that $\Delta_f$ is symmetric on $L^2(d\mu_f)$. As shown in \cite{Der-Asy-Com-Egs}, a solution to (\ref{eigen-pb-wei-lap}) has mainly two possible behaviors at infinity : either it decays like $f^{-\lambda}$, or it decays like $f^{\lambda-n/2}e^{-f}$. The latter is the only solution in $L^2(e^fd\mu_g)$. Furthermore, if one assumes that $h=\textit{o}(f^{\lambda-n/2}e^{-f})$, then it turns out that there is some universal positive number $\alpha$ such that $h=\textit{O}(f^{\lambda-n/2-\alpha}e^{-f})$. To prove this fact, we derive the evolution equation of the rescaled tensor $h_{\lambda}:=f^{n/2-\lambda}e^fh$. It turns out that $h_{\lambda}$ satisfies a static backward heat-like equation, i.e. it involves the Ornstein-Uhlenbeck operator $\Delta_{-f}:=\Delta-\nabla f$ that implies automatic regularity at infinity : see section \ref{sec-pt-bd-orn-uhl}. This first step lets us start a bootstrap argument that consists in showing that $h_{\lambda}$ decays faster than polynomially in adequate $L^2$ spaces.

The main difficulty in adapting these arguments to a non linear setting is that the metric does not satisfy a non degenerate elliptic equation : this is due to the invariance of the Ricci tensor under the action by the diffeomorphism group. Nonetheless, as observed in \cite{Biq-Uni-Con} or in \cite{Kot-Lu-Con}, the Ricci tensor does satisfy a nice elliptic equation which leads to consider a system involving both the Ricci tensor and the metric : see section \ref{section-proof-main-theo}.

\begin{rk}
It turns out that the proofs of theorems \ref{Main-theo-uni-inf-0} and \ref{theo-sharp-dec-eig-tens-I} do not need $C^{\infty}$ regularity of the Ricci expander up to the boundary at infinity. Only a finite number of bounded rescaled covariant derivatives of the curvature tensor are actually needed, say at least five by inspecting the proofs. On one hand, we decided to focus on asymptotically conical expanders that are smooth up to the boundary since the setting of theorem \ref{Main-theo-uni-inf-1} automatically implies full regularity at infinity. On the other hand, it would be suitable to reduce the assumption on the smoothness in the generic case because of the existence of such expanders with only $C^{3,\theta}$ regularity at infinity : \cite{Der-Smo-Pos-Met-Con}.
\end{rk}

We end this introduction by several questions. Once the uniqueness at infinity is established, the possibility of extending Killing fields from the section of the asymptotic cone comes to mind. In the case of positively curved expanding gradient Ricci solitons, Chodosh \cite{Cho-EGS} has proved that there is a unique way to smooth out the most symmetric metric cones $(C(\mathbb{S}^{n-1}),dr^2+(cr)^2g_{\mathbb{S}^{n-1}})_{c\in(0,1]}$ by Ricci gradient expanders : as mentioned above, these expanders are provided by the rotationally symmetric Bryant examples. On the other hand, if one considers general Ricci flows $(M^n,g(t))_{t\in[0,T)}$ on smooth Riemannian manifolds with bounded curvature starting from a smooth metric $g(0)$ with bounded curvature, it is well-known that any isometry of the initial metric remains an isometry of the flow, i.e. $\Isom(M,g(0))\subset\Isom(M,g(t))$, for any $t\in[0,T)$. A much deeper result due to Kotschwar \cite{Kot-Bac-Uni} implies that the isometry group does not increase with time unless it reaches a singularity. Therefore, in the setting of expanders, we ask the following (possibly naive) question.

\begin{ques}\label{ext-sym-inf-loc}
Let $(M^n,g,\nabla^g f)$ be an expanding gradient Ricci soliton asymptotically conical to $(C(X),dr^2+r^2g_X,r\partial_r/2)$.
If $U_X$ is a Killing field on $X$, is there a Killing field $U$ asymptotic to $U_X$ on $M^n$ orthogonal to $\nabla f$. In particular, is it true that $\dim\Isom(M,g)\geq\dim\Isom(X,g_X)$ ? 
\end{ques}
A related question related to the work of Anderson and Herzlich \cite{And-Her} on conformally compact Einstein manifolds is the following. 

\begin{ques}\label{ext-sym-inf-glo}
Let $(M^n,g,\nabla^g f)$ be an expanding gradient Ricci soliton asymptotically conical to $(C(X),dr^2+r^2g_X,r\partial_r/2)$.
If $\pi_1(M,X)=\{1\}$, can any connected group of isometries of $(X,g_X)$ be extended to an action of isometries on $(M^n,g)$ ?
\end{ques}
Of course, a yes answer to the previous question would imply the uniqueness of the Bryant examples $(M^n,g_c,\nabla f_c)_{c>0}$ asymptotical to $(C(\mathbb{S}^{n-1}),dr^2+(cr)^2g_{\mathbb{S}^{n-1}},r\partial_r/2)_{c>0}$.\\

The structure of this paper is as follows. Section \ref{sec-pt-bd-orn-uhl} studies the regularity at infinity of solutions of backward heat-like equation : the main result is theorem \ref{a-prio-point-bd-orns-op} which leads to proposition \ref{prop-well-def-ric-tens-infty}. We establish Carleman inequalities for the weighted laplacian $\Delta_f$ in section \ref{sec-car-ine-wei-lap} under $\HypI$ or $\HypII$. Section \ref{sec-theo-sharp-dec-eig-tens-I} is devoted to the proof of theorem \ref{theo-sharp-dec-eig-tens-I}. Section \ref{section-proof-main-theo} proves theorem \ref{section-proof-main-theo} and establishes lemma \ref{com-ric-cur-inf}.


The author is supported by the EPSRC on a Programme Grant entitled ‘Singularities of Geometric Partial Differential Equations’ (reference number EP/K00865X/1).

\section{A priori pointwise bounds on weighted elliptic equations}\label{sec-pt-bd-orn-uhl}

In this section, we establish a regularity theorem at infinity for the Ornstein-Uhlenbeck operator.

\begin{theo}\label{a-prio-point-bd-orns-op}
Let $(M^n,g,\nabla^g f)$ be an expanding gradient Ricci soliton asymptotically conical.
Let $h$ be a smooth tensor solution to 
\begin{eqnarray*}
\Delta_fh=-\lambda h+V_0\ast h+V_1\ast \nabla h+Q,
\end{eqnarray*}
where $Q$, $V_0$, $V_1$ are smooth tensors. 
Define the rescaled solution $h_{\lambda}:=v^{\frac{n}{2}-\lambda}e^vh$ where $v$ is defined as in appendix \ref{sol-equ-sec}. Analagously, define $Q_{\lambda}:=v^{\frac{n}{2}-\lambda}e^vQ$. Assume 
\begin{eqnarray}
&&\sup_M\arrowvert h_{\lambda}\arrowvert<+\infty,\\
&&\mathscr{V}_{0,k}:=\sup_Mv^{k/2}\arrowvert\nabla^k(vV_0)\arrowvert<+\infty,\quad\forall k\geq 0,\\
&&\mathscr{V}_{1,k}:=\sup_Mv^{k/2}\arrowvert\nabla^k(v^{3/2}V_1)\arrowvert<+\infty,\quad\forall k\geq 0,\\
&&\mathfrak{Q}_{\lambda,k}:=\sup_Mv^{k/2}\arrowvert\nabla^k(vQ_{\lambda})\arrowvert<+\infty,\quad\forall k\geq 0.
\end{eqnarray}
Then,
\begin{eqnarray*}
\sup_Mv^{k/2}\arrowvert\nabla^kh_{\lambda}\arrowvert\leq C\left(k,n,g,\sup_M\arrowvert h_{\lambda}\arrowvert,(\mathfrak{Q}_{\lambda,i})_{0\leq i\leq k},(\mathscr{V}_{0,i})_{0\leq i\leq k},(\mathscr{V}_{1,i})_{0\leq i\leq k}\right),
\end{eqnarray*}
for $k\geq1$.

\end{theo}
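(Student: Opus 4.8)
The plan is to establish the weighted derivative estimates by a bootstrap argument on $k$, exploiting the fact that the rescaled tensor $h_\lambda$ satisfies a static backward-heat-type (Ornstein--Uhlenbeck) equation with good structure at infinity. First I would compute the equation satisfied by $h_\lambda=v^{\frac{n}{2}-\lambda}e^vh$. Since $v$ plays the role of $f+\mu(g)$ and behaves like $r^2/4$ at infinity, conjugating $\Delta_f$ by the weight $v^{\frac{n}{2}-\lambda}e^v$ should convert the forward weighted Laplacian into an operator of the form $\Delta_{-v}=\Delta-\nabla v$ (the Ornstein--Uhlenbeck operator) plus zeroth-order and gradient terms with controlled decay. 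Concretely, a direct computation using $\Delta_f v=\tr\Hess_f v$ and $|\nabla v|^2\sim v$ should yield an equation
\begin{eqnarray*}
\Delta_{-v}h_\lambda=W_0\ast h_\lambda+W_1\ast\nabla h_\lambda+Q_\lambda,
\end{eqnarray*}
where the new potentials $W_0,W_1$ inherit the decay rates $\nabla^k(vW_0)=O(1)$ and $\nabla^k(v^{3/2}W_1)=O(v^{-k/2})$ from the hypotheses on $V_0,V_1$ together with the derivatives of the weight. The key structural point, already flagged in the introduction, is that $\Delta_{-v}$ is a backward operator whose static solutions enjoy automatic regularity at infinity; this is what makes the scheme work.

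Next I would set up the bootstrap. Assuming inductively the bound $\sup_M v^{j/2}|\nabla^j h_\lambda|<\infty$ for all $j\le k-1$, I would differentiate the conjugated equation $k$ times. Commuting $\nabla^k$ past $\Delta_{-v}$ produces curvature-commutator terms of the schematic form $\Rm\ast\nabla^{k-2}(\cdot)$ and lower-order terms involving $\nabla^j v$ for $j\le k$; the asymptotically conical hypothesis guarantees that the rescaled curvature $v|\Rm|$ and its covariant derivatives $v^{1+j/2}|\nabla^j\Rm|$ are bounded, so every such commutator term carries the correct weight of $v$. The product terms $\nabla^k(W_i\ast\nabla^i h_\lambda)$ expand by Leibniz into pieces $\nabla^a W_i\ast\nabla^b h_\lambda$ with $a+b=k+i$, and the decay assumptions $\mathscr V_{0,k},\mathscr V_{1,k}<\infty$ are precisely tuned so that each piece either falls under the inductive hypothesis or is absorbed into the highest-order term with a small coefficient that decays like a negative power of $v$. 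The upshot is that $\nabla^k h_\lambda$ satisfies an Ornstein--Uhlenbeck equation of the same type as $h_\lambda$ itself, with a new inhomogeneous term that is already controlled in the weighted norm by the induction and by $\mathfrak Q_{\lambda,k}$.

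To close the induction, I would apply the base regularity result for the Ornstein--Uhlenbeck operator: a bounded static solution to $\Delta_{-v}u=F$ with $v^{1/2}F$ bounded satisfies $\sup_M v^{1/2}|\nabla u|<\infty$, with the constant depending only on the bounds for $u$, $F$, and the geometry. Iterating this, or applying it to $u=\nabla^{k-1}h_\lambda$, upgrades the $(k-1)$-st estimate to the $k$-th. This elliptic regularity step for $\Delta_{-v}$ is the technical heart and I expect it to rest on a Moser-type iteration or an energy/interpolation argument exploiting the coercivity of $-\Delta_{-v}+\tfrac12$ on weighted $L^2$ spaces; the drift term $-\nabla v\cdot\nabla u$, which grows linearly, is a feature rather than an obstacle because it enforces decay and provides the positive spectral gap.

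The main obstacle I anticipate is bookkeeping the weights of $v$ uniformly across all the commutator and Leibniz terms so that they combine into exactly the claimed estimate $v^{k/2}|\nabla^k h_\lambda|\le C$, rather than a lossy bound with extra logarithmic or polynomial factors. In particular, the gradient potential $W_1$ decays only like $v^{-3/2}$, which is just enough to beat the weight of an extra derivative; verifying that this borderline decay genuinely suffices at every step, and that the constants can be made to depend only on the listed quantities $(\mathfrak Q_{\lambda,i})$, $(\mathscr V_{0,i})$, $(\mathscr V_{1,i})$ and not on $h_\lambda$ itself beyond its supremum, is the delicate part. The asymptotically conical structure, which controls $|\nabla v|^2=v+O(\cdot)$ and the rescaled curvature derivatives, is exactly what keeps these weights sharp.
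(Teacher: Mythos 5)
Your first step (conjugating by $v^{n/2-\lambda}e^v$ and using the soliton identities to turn the equation into a static Ornstein--Uhlenbeck equation $\Delta_{-f}h_\lambda = W_0\ast h_\lambda + W_1\ast\nabla h_\lambda + Q_\lambda$ with controlled potentials) and your overall induction-on-$k$ scheme do coincide with the paper's. The genuine gap is your third step: the entire analytic content of the theorem is the weighted gradient estimate you call the ``base regularity result'' for $\Delta_{-v}$, and you do not prove it --- you only \emph{expect} it to follow from Moser iteration or a weighted energy/interpolation argument. Neither route plausibly delivers the stated bound. Moser iteration produces sup bounds and Harnack-type conclusions for solutions, not gradient bounds; and any localization argument collides with the fact that the drift coefficient is unbounded, $|\nabla f|\sim v^{1/2}$. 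Interior Schauder or $W^{2,p}$ estimates on balls of radius $\sim v^{-1/2}$ (the scale on which the drift is normalized to unit size) only see the oscillation of $h_\lambda$ on that small ball, and unwinding the scaling gives $v^{1/2}|\nabla h_\lambda|\lesssim v\sup_M|h_\lambda|+\dots$, i.e.\ an estimate lossy by a full factor of $v$ compared with the claim; working at unit scale instead makes the Schauder constant blow up with the drift. A weighted-$L^2$ coercivity argument faces the same loss when converted into pointwise weighted bounds. The sharp estimate requires exploiting the global structure of the drift, not just local ellipticity.

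What the paper actually does at this step --- and what your proposal is missing --- is a Bernstein/Shi-type maximum principle argument. One computes $\Delta_{-f}$ of $u_1:=|h_\lambda|^2$ and of $u_2:=v|\nabla h_\lambda|^2$ (this is where the rescaled curvature bounds and the quantities $\mathfrak{Q}_{\lambda,0}$, $\mathfrak{Q}_{\lambda,1}$ enter; note that this differentiation of the equation genuinely needs one derivative of the source, $v^{1/2}\nabla(vQ_\lambda)$, which your proposed lemma hypothesis ``$v^{1/2}F$ bounded'' does not grant, so even as a black box your cited lemma is mis-stated), and then applies the maximum principle to $\psi\,u_2(u_1+a)$ with $a$ proportional to $\sup_M u_1$ and cutoff $\psi=\eta(v/t)$. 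The unbounded drift is an asset exactly here: since $\eta'\le 0$, the term $-\eta'|\nabla f|^2/t$ in $\Delta_{-f}\psi$ has a favorable sign, giving $\Delta_{-f}\psi\ge -C/t$ with $C$ independent of $t$, so the quadratic inequality satisfied by $\psi u_2$ at an interior maximum yields $\sup_M v^{1/2}|\nabla h_\lambda|\le C\bigl(\sup_M|h_\lambda|+\mathfrak{Q}_{\lambda,0}^2+\mathfrak{Q}_{\lambda,1}^2\bigr)$ uniformly in $t$. Without this (or an equivalent) argument your induction has no way to close at any level $k$, so the proposal as written does not constitute a proof.
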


\begin{rk}
Again, most geometric applications do not need smoothness both of the metric and the tensors $V_0$ and $V_1$ up to the boundary at infinity.
For instance, if $V_0=\Rm(g)$, $V_1=\nabla\Rm(g)$ (respectively $V_1\equiv 0$) and $Q\equiv 0$, in order to get $C^2$ a priori rescaled bounds, we need the metric to be $C^5$ (respectively $C^4$) at infinity. 

Moreover, theorem \ref{a-prio-point-bd-orns-op}
requires optimal bounds on $Q$, regarding the uniqueness issue of expanders : see the proof of theorem \ref{Main-theo-uni-inf-0}.
\end{rk}
\begin{proof}
One computes the evolution equation of $h_{\lambda}$. Similar computations appear in \cite{Der-Asy-Com-Egs}. 
\begin{eqnarray}
\Delta_fh_{\lambda}&=&\left[\Delta_f(v^{\frac{n}{2}-\lambda}e^v)-2\arrowvert\nabla \ln(v^{\frac{n}{2}-\lambda}e^v)\arrowvert^2\right]h_{\lambda}\\
&&+2<\nabla \ln(v^{\frac{n}{2}-\lambda}e^v),\nabla h_{\lambda}>+v^{\frac{n}{2}-\lambda}e^v\Delta_fh.
\end{eqnarray}
Now, by using the soliton identities given by lemma \ref{id-EGS},
\begin{eqnarray*}
&&\Delta_f(v^{\frac{n}{2}-\lambda}e^v)-2\arrowvert\nabla \ln(v^{\frac{n}{2}-\lambda}e^v)\arrowvert^2=
\left[v+\arrowvert\nabla v\arrowvert^2\left(1-2\left(1+\left(\frac{n}{2}-\lambda\right)\frac{1}{v}\right)^2\right)\right]v^{\frac{n}{2}-\lambda}e^v\\
&&+\left[(n-2\lambda)\frac{\arrowvert\nabla v\arrowvert^2}{v}+\left(\frac{n}{2}-\lambda\right)+\left(\frac{n}{2}-\lambda\right)\left(\frac{n}{2}-\lambda-1\right)\arrowvert\nabla\ln v\arrowvert^2\right]v^{\frac{n}{2}-\lambda}e^v\\
&=&\left(\lambda+\bar{V}_0\right)v^{\frac{n}{2}-\lambda}e^v,
\end{eqnarray*}
where $\bar{V}_0$ satisfies $\nabla^k\bar{V}_0=\textit{O}(v^{-1-k/2})$, for any nonnegative integer $k$. Hence,
\begin{eqnarray*}
\Delta_{-f+(2\lambda-n)\ln v}h_{\lambda}&=&(V_0+\bar{V_0}+\bar{V_1})\ast h_{\lambda}+V_1\ast\nabla h_{\lambda}+Q_{\lambda},
\end{eqnarray*}
where $Q_{\lambda}:=v^{n/2-\lambda}e^vQ$, $\bar{V_1}$ satisfies the same asymptotics as $V_0$. Finally, $h_{\lambda}$ satisfies
\begin{eqnarray}\label{ell-eq-resc-sol}
\Delta_{-f}h_{\lambda}&=&W_0\ast h_{\lambda}+W_1\ast\nabla h_{\lambda}+Q_{\lambda},
\end{eqnarray}
where $W_0$ behaves like $V_0$ at infinity and $W_1$ is such that
\begin{eqnarray*}
\mathscr{W}_{1,k}:=\sup_Mv^{k/2}\arrowvert\nabla^k (v^{1/2} W_1)\arrowvert<+\infty,\quad\forall k\geq 0.
\end{eqnarray*}

We claim that if $h_{\lambda}$ is bounded so are the weighted covariant derivatives $v^{k/2}\nabla^kh_{\lambda}$ for any integer $k$. 

Indeed, this is an adaptation of the arguments originally due to Shi and adapted to the soliton case in \cite{Der-Asy-Com-Egs}. We only prove the case $k=1$. The cases $k\geq2$ go along the same lines. Along the proof, $W_0$ and $W_1$ can change from line to line but have the same asymptotics together with their covariant derivatives as before.
\begin{eqnarray*}
\Delta_{-f}(v^{1/2}\nabla h_{\lambda})&=&v^{1/2}\nabla(\Delta_{-f}h_{\lambda})+(v^{1/2}\nabla\Rm(g))\ast h_{\lambda}+\Rm(g)\ast(v^{1/2}\nabla h_{\lambda})\\
&=&v^{1/2}\nabla(W_0\ast h_{\lambda}+W_1\ast\nabla h_{\lambda}+Q_{\lambda})+(v^{1/2}\nabla\Rm(g))\ast h_{\lambda}+\Rm(g)\ast(v^{1/2}\nabla h_{\lambda})\\
&=&W_1\ast \nabla(v^{1/2}\nabla h_{\lambda})+W_0\ast (v^{1/2}\nabla h_{\lambda}+ h_{\lambda})+v^{1/2}\nabla Q_{\lambda}.
\end{eqnarray*}
Therefore,
\begin{eqnarray*}
\Delta_{-f}\arrowvert h_{\lambda}\arrowvert^2&\gtrsim& \arrowvert \nabla h_{\lambda}\arrowvert^2-\arrowvert W_0\arrowvert\arrowvert h_{\lambda}\arrowvert^2-\arrowvert W_1\arrowvert\arrowvert \nabla h_{\lambda}\arrowvert \arrowvert h_{\lambda}\arrowvert-\arrowvert Q_{\lambda}\arrowvert\arrowvert h_{\lambda}\arrowvert\\
&\gtrsim&\arrowvert\nabla h_{\lambda}\arrowvert^2-v^{-1}\arrowvert h_{\lambda}\arrowvert^2-v\arrowvert Q_{\lambda}\arrowvert^2,\\
\Delta_{-f}\arrowvert v^{1/2}\nabla h_{\lambda}\arrowvert^2&\gtrsim& \arrowvert \nabla (v^{1/2}\nabla h_{\lambda})\arrowvert^2-\arrowvert W_1\arrowvert \arrowvert\nabla (v^{1/2}\nabla h_{\lambda})\arrowvert\arrowvert v^{1/2}\nabla h_{\lambda}\arrowvert\\
&&-\arrowvert W_0\arrowvert\left(\arrowvert v^{1/2}\nabla h_{\lambda}\arrowvert \arrowvert h_{\lambda}\arrowvert+\arrowvert v^{1/2}\nabla h_{\lambda}\arrowvert^2\right)-\arrowvert v^{1/2}\nabla Q_{\lambda}\arrowvert\arrowvert v^{1/2}\nabla h_{\lambda}\arrowvert\\
&\gtrsim&\arrowvert \nabla (v^{1/2}\nabla h_{\lambda})\arrowvert^2-v^{-1}(\arrowvert v^{1/2}\nabla h_{\lambda}\arrowvert ^2+\arrowvert  h_{\lambda}\arrowvert ^2)-v\arrowvert v^{1/2}\nabla Q_{\lambda}\arrowvert^2,
\end{eqnarray*}
where $\gtrsim$ means " not less than " up to a positive multiplicative (universal) constant.
Define $u_1:=\arrowvert h_{\lambda}\arrowvert^2$, $u_2:=v\arrowvert \nabla h_{\lambda}\arrowvert^2$. Then, consider the function $u_2(u_1+a)$ with $a$ a positive number to be defined later. One has,
\begin{eqnarray*}
\Delta_{-f}((u_1+a)u_2)&\gtrsim& \frac{u_2^2}{v}-v^{-1}u_1u_2+2<\nabla u_1,\nabla u_2>\\
&&+\arrowvert\nabla (v^{1/2}\nabla h_{\lambda})\arrowvert^2(u_1+a)-v^{-1}(u_1+u_2)(u_1+a)\\
&&-v\arrowvert Q_{\lambda}\arrowvert^2u_2-v\arrowvert v^{1/2}\nabla Q_{\lambda}\arrowvert^2(u_1+a)\\
&\gtrsim&\frac{u_2^2}{v}-v^{-1}u_1u_2-v^{-1/2}\arrowvert\nabla (v^{1/2}\nabla h_{\lambda})\arrowvert u_2u_1^{1/2}\\
&&+\arrowvert\nabla (v^{1/2}\nabla h_{\lambda})\arrowvert^2(u_1+a)-v^{-1}(u_1+u_2)(u_1+a)\\
&&-v\arrowvert Q_{\lambda}\arrowvert^2u_2-v\arrowvert v^{1/2}\nabla Q_{\lambda}\arrowvert^2(u_1+a)\\
&\gtrsim&\frac{u_2^2}{v}-v^{-1}u_1u_2+\arrowvert\nabla (v^{1/2}\nabla h_{\lambda})\arrowvert^2(-u_1+a)\\
&&-v^{-1}(u_1+u_2)(u_1+a)-v\arrowvert Q_{\lambda}\arrowvert^2u_2-v\arrowvert v^{1/2}\nabla Q_{\lambda}\arrowvert^2(u_1+a),\\
\end{eqnarray*}
for any positive $a$. Hence, by choosing $a$ proportional to $\sup_M\arrowvert h_{\lambda}\arrowvert^2=\sup_M u_1$ accordingly,
\begin{eqnarray*}
v\Delta_{-f}((u_1+a)u_2)&\gtrsim& u_2^2-(u_1+a+\sup_M\arrowvert v Q_{\lambda}\arrowvert^2)u_2-(u_1+a)\left[(u_1+a)+\sup_M\arrowvert v^{1/2}\nabla (v Q_{\lambda})\arrowvert^2\right].
\end{eqnarray*}
Consider now the following cutoff function $\psi(x):=\eta(v(x)/t)$ for some positive $t$ where $\eta:\mathbb{R}_+\rightarrow\mathbb{R}_+$ is a smooth positive function such that
\begin{eqnarray*}
\eta|_{[0,1]}\equiv 1,\quad\eta|_{[2,+\infty)}\equiv 0,\quad \frac{(\eta')^2}{\eta}\leq c,\quad\eta'\leq 0,\quad\eta''\geq-c,
\end{eqnarray*}
for some positive constant $c$. Consider the function $\psi u_2(u_1+a)$ defined on $M$ and apply the maximum principle at a point where this function attains its maximum :  after multiplying the previous inequality by $\psi^2$,
\begin{eqnarray*}
0&\gtrsim& (\psi u_2)^2-(a+\sup_M\arrowvert v Q_{\lambda}\arrowvert^2)(\psi u_2)-a\left[a+\sup_M\arrowvert v^{1/2}\nabla (v Q_{\lambda})\arrowvert^2\right]\\
&&-a\frac{v\arrowvert\nabla \psi\arrowvert^2}{\psi}(\psi u_2)+(v\Delta_{-v}\psi)(\psi u_2(u_1+a))\\
&\gtrsim&  (\psi u_2)^2-(a+\sup_M\arrowvert v Q_{\lambda}\arrowvert^2)(\psi u_2)-a\left[a+\sup_M\arrowvert v^{1/2}\nabla (v Q_{\lambda})\arrowvert^2\right],
\end{eqnarray*}
 since
\begin{eqnarray*}
\Delta_{-f}\psi&=&\eta''\frac{\arrowvert\nabla f\arrowvert^2}{t^2}+\eta'\frac{\Delta f}{t}-\eta'\frac{\arrowvert\nabla f\arrowvert^2}{t}\\
&\geq&-\frac{C}{t}.
\end{eqnarray*}
Now, as the maxima of $\psi u_2(u_1+a)$ and of $(\psi u_2)a$ are comparable, one deduces that
\begin{eqnarray*}
\sup_M\arrowvert v^{1/2}\nabla h_{\lambda}\arrowvert\leq C\left[\sup_M\arrowvert h_{\lambda}\arrowvert+\sup_M\arrowvert v Q_{\lambda}\arrowvert^2+\sup_M\arrowvert v^{1/2}\nabla( v Q_{\lambda})\arrowvert^2)\right],
\end{eqnarray*}
where $C$ is a positive constant independent of $h$ and $Q$.

\end{proof}

Proposition \ref{prop-well-def-ric-tens-infty} is a straightforward application of theorem \ref{a-prio-point-bd-orns-op} in case $(M^n,g,\nabla^g f)$ is an asymptotically Ricci flat expanding gradient Ricci soliton. Indeed, apply theorem \ref{a-prio-point-bd-orns-op} to equation (\ref{equ:6}) satisfied by $h:=\Ric(g)$ where $\lambda=1$, $V_0:=\Rm(g)$, $V_1\equiv 0$ and $Q\equiv 0$.

\section{ Carleman inequalities for the weighted laplacian}\label{sec-car-ine-wei-lap}

We derive first general algebraic commutator identities for the weighted laplacian in the spirit of \cite{Don-Ale-Spe}.

Let $(M^n,g)$ be a complete Riemannian manifold and let $f:M\rightarrow\mathbb{R}$ be a smooth function.

Let $$A=\nabla_{\nabla\phi}+\frac{\Delta_f\phi}{2}.$$ Then $A$ is antisymmetric with respect to the measure $e^fd\mu(g)$ . Let $H:=-\Delta_f$.
\begin{lemma}\label{lemma-comm-est-don}
Let $T$ be a smooth tensor. Then,
\begin{eqnarray}\label{com-id}
[H,A]T&=&-2<\nabla^2\phi,\nabla^2T>-\frac{\Delta_f(\Delta_f\phi)}{2}T-2\nabla_{\Delta_f\nabla\phi} T\\
&&+\Rm(g)(\nabla \phi,\cdot,\cdot,\cdot)\ast \nabla T+\nabla\Rm(g)\ast\nabla\phi\ast T,
\end{eqnarray}
where 
\begin{eqnarray*}
<\nabla^2\phi,\nabla^2T>:=\nabla^2_{ij}\phi\nabla^2_{ij}T.
\end{eqnarray*}

\end{lemma}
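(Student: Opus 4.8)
The plan is to regard $A=\nabla_{\nabla\phi}+\tfrac{\Delta_f\phi}{2}$ as the sum of a first–order (vector–field) operator and a zeroth–order (multiplication) operator, and to compute the two resulting commutators against $\Delta_f$ separately. Since $H=-\Delta_f$, one has the purely formal reduction $[H,A]=-[\Delta_f,A]=-[\Delta_f,\nabla_{\nabla\phi}]-[\Delta_f,\tfrac{\Delta_f\phi}{2}\,\cdot\,]$, where the last bracket is the commutator of $\Delta_f$ with multiplication by the scalar $\tfrac{\Delta_f\phi}{2}$. (The antisymmetry of $A$ recorded just before the lemma is not needed for the identity itself; it only explains why this particular zeroth–order term is attached to $\nabla_{\nabla\phi}$.)

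The multiplication piece is elementary. For a scalar $\psi$ and a tensor $T$ the Bakry–\'Emery product rule reads $\Delta_f(\psi T)=(\Delta_f\psi)T+2\nabla_{\nabla\psi}T+\psi\,\Delta_fT$, so that $[\Delta_f,\psi\,\cdot\,]T=(\Delta_f\psi)T+2\nabla_{\nabla\psi}T$. Taking $\psi=\tfrac{\Delta_f\phi}{2}$ gives $[\Delta_f,\tfrac{\Delta_f\phi}{2}\,\cdot\,]T=\tfrac{\Delta_f(\Delta_f\phi)}{2}T+\nabla_{\nabla(\Delta_f\phi)}T$, which already supplies the term $-\tfrac{\Delta_f(\Delta_f\phi)}{2}T$ of the statement and contributes one first–order drift term $-\nabla_{\nabla(\Delta_f\phi)}T$ to be reconciled at the end.

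The heart of the matter is the vector–field commutator $[\Delta_f,\nabla_{\nabla\phi}]T$, which I would compute in a local frame after writing $\Delta_f=\Delta+\nabla_{\nabla f}$. Expanding $\Delta(\nabla^k\phi\,\nabla_kT)=g^{ij}\nabla_i\nabla_j(\nabla^k\phi\,\nabla_kT)$ by Leibniz yields exactly three groups of terms: $(\Delta\nabla\phi)^k\nabla_kT$ (the rough Laplacian of the vector field $\nabla\phi$), the Hessian contraction $2(\nabla^2\phi)^{ik}\nabla^2_{ik}T=2\langle\nabla^2\phi,\nabla^2T\rangle$, and $\nabla^k\phi\,\Delta(\nabla_kT)$; in the last one I would commute $\nabla_k$ through the rough Laplacian by applying the Ricci identity twice, recovering $\nabla_{\nabla\phi}(\Delta T)$ together with curvature remainders of the schematic form $\Rm(g)(\nabla\phi,\cdot,\cdot,\cdot)\ast\nabla T+\nabla\Rm(g)\ast\nabla\phi\ast T$. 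For the drift part, the Ricci identity gives $[\nabla_{\nabla f},\nabla_{\nabla\phi}]T=\nabla_{[\nabla f,\nabla\phi]}T+R(\nabla f,\nabla\phi)\cdot T$. Collecting the first–order contributions, the rough Laplacian of $\nabla\phi$ together with $\nabla_{\nabla f}\nabla\phi$ assembles into the weighted connection Laplacian $\Delta_f\nabla\phi$, leaving $[\Delta_f,\nabla_{\nabla\phi}]T=\nabla_{\Delta_f\nabla\phi-\nabla_{\nabla\phi}\nabla f}T+2\langle\nabla^2\phi,\nabla^2T\rangle+R(\nabla f,\nabla\phi)\cdot T+(\text{curvature remainders})$.

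The final step is to substitute into $[H,A]T=-[\Delta_f,\nabla_{\nabla\phi}]T-\tfrac{\Delta_f(\Delta_f\phi)}{2}T-\nabla_{\nabla(\Delta_f\phi)}T$ and reconcile all first–order terms into $-2\nabla_{\Delta_f\nabla\phi}T$; this bookkeeping is where I expect the only real difficulty to lie. It rests on the weighted Bochner identity $\Delta_f\nabla\phi=\nabla(\Delta_f\phi)+(\Ric-\Hess f)(\nabla\phi)$, combined with $\nabla_{\nabla\phi}\nabla f=\Hess f(\nabla\phi)$: a short calculation then shows the collected drift terms equal $-2\nabla_{\Delta_f\nabla\phi}T+\nabla_{\Ric(\nabla\phi)}T$, and since $\Ric$ is a contraction of $\Rm$, the leftover $\nabla_{\Ric(\nabla\phi)}T$ is itself of the form $\Rm(g)(\nabla\phi,\cdot,\cdot,\cdot)\ast\nabla T$ and is absorbed into the curvature remainder. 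The one genuinely delicate contribution is $R(\nabla f,\nabla\phi)\cdot T$; for the radial weights $\phi=\phi(f)$ used in the Carleman inequalities of Section \ref{sec-car-ine-wei-lap} one has $\nabla\phi\parallel\nabla f$, so this term vanishes by antisymmetry of the curvature operator, while in general it is grouped with the curvature remainders. Thus careful sign and index tracking in this reconciliation, rather than any conceptual obstruction, is what the proof ultimately demands.
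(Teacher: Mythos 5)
Your proposal is correct and follows essentially the same route as the paper: both arguments reduce to a Leibniz expansion of the weighted Laplacian against $A$, the weighted Bochner identity $\Delta_f\nabla\phi=\nabla(\Delta_f\phi)+\Ric_f(\nabla\phi)$ with $\Ric_f=\Ric(g)-\Hess f$, and the Ricci commutation identities, with the leftover drift $\nabla_{\Ric(\nabla\phi)}T$ absorbed into the schematic term $\Rm(g)(\nabla\phi,\cdot,\cdot,\cdot)\ast\nabla T$ exactly as the paper does. The only cosmetic difference is that you split $A$ into its first-order and zeroth-order parts and commute each with $\Delta_f$, whereas the paper computes $HAT$ and $AHT$ wholesale and subtracts, organizing the second-order terms through $\left<\nabla\phi,[\Delta_f,\nabla]T\right>$; your explicit flagging of the contribution $R(\nabla f,\nabla\phi)\cdot T$ (which vanishes when $\nabla\phi\parallel\nabla f$, and otherwise is converted by the soliton identity $\div\Rm(g)=\Rm(g)(\cdot,\cdot,\nabla f,\cdot)$ into a $\nabla\Rm(g)\ast$ term) is, if anything, more careful than the paper's, which silently absorbs it into its stated formula for $[\Delta_f,\nabla]T$.
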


\begin{proof}
On one hand,
\begin{eqnarray*}
HAT&=&-\Delta_f\left[\nabla_{\nabla \phi} T+\frac{(\Delta_f\phi)}{2}T\right]\\
&=&-2<\nabla^2\phi,\nabla^2T>-\nabla_{\Delta_f\nabla\phi} T-<\nabla\phi,\Delta_f\nabla T>\\
&&-\frac{\Delta_f(\Delta_f\phi)}{2}T-\frac{\Delta_f\phi}{2}\Delta_fT-\nabla_{\nabla\Delta_f\phi} T,\\
\end{eqnarray*}
where $<\nabla^2\phi,\nabla^2T>:=\nabla^2_{ij}\phi\nabla^2_{ij}T,$ and $<\nabla\phi,\Delta_f\nabla T>:=\nabla_i\phi\Delta_f\nabla_i T.$
On the other hand,
\begin{eqnarray*}
AHT=\nabla_{\nabla\phi}(-\Delta_fT)-\frac{\Delta_f\phi}{2}\Delta_fT.
\end{eqnarray*}
Therefore,
\begin{eqnarray*}
[H,A]T&=&-2<\nabla^2\phi,\nabla^2T>-\frac{\Delta_f(\Delta_f\phi)}{2}T-2\nabla_{\nabla\Delta_f\phi} T\\
&&-\nabla_{\Ric_f(\nabla\phi)} T-<\nabla\phi,[\Delta_f,\nabla] T>,
\end{eqnarray*}
where $\Ric_f:=\Ric(g)+\nabla^{g,2}(-f).$
Indeed,
\begin{eqnarray*}
\Delta_f\nabla \phi&=&\nabla\Delta \phi+\Ric(g)(\nabla \phi)+\nabla_{\nabla f}\nabla \phi=\nabla(\Delta \phi+\nabla_{\nabla f}\phi)+\Ric_f(\nabla \phi),\\
\left[\Delta_f,\nabla\right]T&=&\Ric_f(\nabla T)+\Rm(g)\ast \nabla T+\nabla\Rm(g)\ast T,\\
\Ric_f(\nabla T)_i&:=&{\Ric_f}_{ik}\nabla_kT,\\
\left<\nabla \phi,\left[\Delta_f,\nabla\right]T\right>&=&\nabla_{\Ric_f(\nabla\phi)} T+\Rm(g)(\nabla \phi,\cdot,\cdot,\cdot)\ast\nabla T+\nabla\Rm(g)\ast \nabla \phi\ast T.
\end{eqnarray*}
Therefore,
\begin{eqnarray*}
[H,A]T&=&-2<\nabla^2\phi,\nabla^2T>-\frac{\Delta_f(\Delta_f\phi)}{2}T-2\nabla_{\Delta_f\nabla\phi} T\\
&&+\Rm(g)(\nabla \phi,\cdot,\cdot,\cdot)\ast\nabla T+\nabla\Rm(g)\ast\nabla\phi\ast T.
\end{eqnarray*}

\end{proof}
Now, we integrate the result of lemma \ref{lemma-comm-est-don} to get a so called Mourre estimate.

\begin{lemma}\label{Mourre-type-est}
Let $T$ be a smooth tensor on $M$ with compact support. Then, the following estimate holds : 
\begin{eqnarray*}
<[H,A]T,T>_{L^2_f}&=&\int_{M}2\nabla^2\phi(\nabla T,\nabla T)+\frac{1}{2}<\nabla\Delta_f\phi,\nabla\arrowvert T\arrowvert^2>d\mu_f\\
&&+\int_M\Rm(g)(\nabla\phi,\cdot,\cdot,\cdot)\ast \nabla T\ast T+\nabla\Rm(g)\ast\nabla\phi\ast T^{*2} d\mu_f,
\end{eqnarray*}
where
\begin{eqnarray*}
\nabla^2\phi(\nabla T,\nabla T):=\nabla^2_{ij}\phi\left<\nabla_iT,\nabla_jT\right>.
\end{eqnarray*}

\end{lemma}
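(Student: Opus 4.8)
The plan is to prove Lemma \ref{Mourre-type-est} by taking the $L^2_f$ inner product of the commutator formula \eqref{com-id} from Lemma \ref{lemma-comm-est-don} against $T$ and integrating each of the five resulting terms by parts with respect to the weighted measure $d\mu_f=e^fd\mu(g)$. Since $T$ has compact support, no boundary terms arise, and the key computational tool is the weighted divergence theorem $\int_M \div_f(W)\,d\mu_f=0$ for any compactly supported vector field $W$, where $\div_f W:=\div W+\langle\nabla f,W\rangle$ is the weighted divergence adjoint to $\nabla$ on $L^2_f$. Indeed the symmetry of $\Delta_f=\div_f\nabla$ on $L^2(d\mu_f)$ is exactly what justifies all the integrations by parts below.

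First I would handle the Hessian term. The claim is that
\begin{eqnarray*}
\int_M -2\langle\nabla^2\phi,\nabla^2 T\rangle\ast T\,d\mu_f=\int_M 2\nabla^2\phi(\nabla T,\nabla T)\,d\mu_f+(\text{lower order}),
\end{eqnarray*}
which follows by integrating by parts the index contracted against a $\nabla$ falling on $\nabla^2\phi\ast T$; writing $\langle\nabla^2\phi,\nabla^2T\rangle=\nabla^2_{ij}\phi\,\nabla^2_{ij}T$ and moving one $\nabla_i$ onto $\nabla^2_{jk}\phi\,\nabla_j T$ via $\div_f$ produces the positive quadratic form $\nabla^2_{ij}\phi\langle\nabla_iT,\nabla_jT\rangle$ together with third-derivative-of-$\phi$ terms. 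Next, the zeroth-order term $-\tfrac12\Delta_f(\Delta_f\phi)T$ paired with $T$ gives $-\tfrac12\Delta_f(\Delta_f\phi)|T|^2$, which I would rewrite after one integration by parts as $\tfrac12\langle\nabla\Delta_f\phi,\nabla|T|^2\rangle$, matching the stated second term. The transport term $-2\nabla_{\Delta_f\nabla\phi}T$ paired with $T$ equals $-\langle\Delta_f\nabla\phi,\nabla|T|^2\rangle$; using the Bochner-type identity $\Delta_f\nabla\phi=\nabla(\Delta_f\phi)+\Ric_f(\nabla\phi)$ recorded in the proof of Lemma \ref{lemma-comm-est-don}, and the soliton equation $\Ric_f=-g/2$, this recombines with the previous term so that the net coefficient of $\langle\nabla\Delta_f\phi,\nabla|T|^2\rangle$ becomes $\tfrac12$, while the $\Ric_f$ contribution folds into the stated curvature remainders. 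Finally the two explicitly curvature-linear terms $\Rm(g)(\nabla\phi,\cdot,\cdot,\cdot)\ast\nabla T+\nabla\Rm(g)\ast\nabla\phi\ast T$ pair with $T$ to give precisely the last integral $\int_M\Rm(g)(\nabla\phi,\cdot,\cdot,\cdot)\ast\nabla T\ast T+\nabla\Rm(g)\ast\nabla\phi\ast T^{\ast 2}\,d\mu_f$, with no manipulation needed beyond recording them.

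I expect the main obstacle to be the careful bookkeeping in the Hessian term, where integrating $\nabla^2_{ij}\phi\,\nabla^2_{ij}T$ against $T$ generates genuinely third-order data: one must commute covariant derivatives, producing curvature terms $\Rm\ast\nabla\phi\ast\nabla T$ and $\nabla\Rm\ast\nabla\phi\ast T$ that should be absorbed into the final curvature integral rather than left as separate terms, and one must verify that the third-derivative-of-$\phi$ pieces arrange themselves into the transport and gradient terms with the correct signs and coefficients. The delicate point is ensuring the algebra of integration by parts on $L^2_f$ does not leave stray terms, and that every curvature commutator is of the schematic form already allowed on the right-hand side. The other three terms are essentially direct once the weighted divergence theorem and the identity $\Delta_f\nabla\phi=\nabla(\Delta_f\phi)+\Ric_f(\nabla\phi)$ are in hand.
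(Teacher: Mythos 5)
Your overall route is the paper's own: pair the commutator identity \eqref{com-id} with $T$ in $L^2_f$ and integrate by parts, with no boundary terms since $T$ is compactly supported. But your explicit ledger does not close, and the step you use to dispose of the $\Ric_f$ contribution would fail. The correct accounting is this: integrating the Hessian term by parts (apply the weighted Stokes theorem to the vector field $\nabla^2_{ij}\phi\,\nabla_i\arrowvert T\arrowvert^2$ and use $\div_f(\nabla^2\phi)=\Delta_f\nabla\phi$) gives
\begin{eqnarray*}
-2\int_{M}\left<T,<\nabla^2\phi,\nabla^2T>\right> d\mu_f&=&\int_{M}<\Delta_f\nabla\phi,\nabla\arrowvert T\arrowvert^2>+2\nabla^2\phi(\nabla T,\nabla T)\,d\mu_f,
\end{eqnarray*}
and it is this $+\left<\Delta_f\nabla\phi,\nabla\arrowvert T\arrowvert^2\right>$ which cancels the transport term $-\left<\Delta_f\nabla\phi,\nabla\arrowvert T\arrowvert^2\right>$ \emph{exactly}, with no Bochner splitting of $\Delta_f\nabla\phi$ ever needed; the zeroth-order term alone then supplies the $\tfrac12\left<\nabla\Delta_f\phi,\nabla\arrowvert T\arrowvert^2\right>$ in the statement, and the curvature integrals come solely from the last two terms of \eqref{com-id}. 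Your ledger instead declares the zeroth-order term final and then ``recombines'' the transport term with it after splitting $\Delta_f\nabla\phi=\nabla\Delta_f\phi+\Ric_f(\nabla\phi)$: that arithmetic yields net coefficient $\tfrac12-1=-\tfrac12$ on $\left<\nabla\Delta_f\phi,\nabla\arrowvert T\arrowvert^2\right>$, not $+\tfrac12$, together with a stray term $-\left<\Ric_f(\nabla\phi),\nabla\arrowvert T\arrowvert^2\right>$; the Hessian term's by-parts contribution, which is what actually restores the balance, is left out of your recombination as an unspecified ``third-derivative-of-$\phi$'' remainder.

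The stray term is fatal as you handle it. First, you may not invoke $\Ric_f=-g/2$: the lemma is stated (and must hold) for an arbitrary complete Riemannian manifold $(M^n,g)$ and an arbitrary smooth $f$ --- the soliton identity enters only afterwards, in corollary \ref{coro-mourre-est-egs}, where $\phi=f$. Second, even granting the expander equation, $\Ric_f(\nabla\phi)=-\tfrac12\nabla\phi$ turns the leftover into $\tfrac12\left<\nabla\phi,\nabla\arrowvert T\arrowvert^2\right>=\left<\nabla_{\nabla\phi}T,T\right>$, a pure transport term with no curvature factor: it is not of the form $\Rm(g)(\nabla\phi,\cdot,\cdot,\cdot)\ast\nabla T\ast T$ nor $\nabla\Rm(g)\ast\nabla\phi\ast T^{*2}$, so it cannot be ``folded into the stated curvature remainders'' and the claimed identity is never reached. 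Finally, the difficulty you anticipate in the Hessian term --- curvature commutators acting on $T$ --- does not in fact arise: the only commutation needed is on third derivatives of $\phi$, namely $\div(\nabla^2\phi)=\nabla\Delta\phi+\Ric(g)(\nabla\phi)$, which is already packaged inside $\Delta_f\nabla\phi$. Once the Hessian term is integrated by parts as displayed above and set against the transport term, the proof closes in two lines; that is precisely the paper's argument.
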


\begin{proof}
By integrating the identity (\ref{com-id}), we get
\begin{eqnarray*}
<[H,A]T,T>_{L^2_f}&=&\int_{M}-2\left<T,<\nabla^2\phi,\nabla^2T>\right>-\frac{\Delta_f(\Delta_f\phi)}{2}\arrowvert T\arrowvert^2d\mu_f\\
&&-\int_{M}<\Delta_f\nabla\phi,\nabla \arrowvert T\arrowvert^2>+\Rm(g)(\nabla\phi,\cdot,\cdot,\cdot)\ast \nabla T\ast T d\mu_f\\
&&+\int_M\nabla\Rm(g)\ast\nabla \phi\ast T^{*2} d\mu_f.\\
\end{eqnarray*}
Now, applying the Stokes theorem to $\nabla^2\phi\left(\nabla\arrowvert T\arrowvert^2\right)$ with respect to the weighted measure $d\mu_f$,
\begin{eqnarray*}
-2\int_{M}\left<T,<\nabla^2\phi,\nabla^2T>\right> d\mu_f&=&\int_{M}<\Delta_f\nabla\phi,\nabla\arrowvert T\arrowvert^2>+2\nabla^2\phi(\nabla T,\nabla T)d\mu_f.
\end{eqnarray*}
Hence the result.
\end{proof}
By applying the previous lemma to $\phi=f$ together with the soliton identity (\ref{equ:4}), we get the
\begin{coro}\label{coro-mourre-est-egs}
Let $(M^n,g,\nabla^g f)$ be an expanding gradient Ricci soliton. Then, if $T$ is a smooth tensor with compact support,
\begin{eqnarray*}
&&<[H,A]T,T>_{L^2_f}=\int_{M}<H T,T>-\frac{v}{2}\arrowvert T\arrowvert^2d\mu_f\\
&&+\int_M2\Ric(g)(\nabla T,\nabla T)+\div\Rm(g)\ast\nabla T\ast T +\nabla\Rm(g)\ast\nabla f\ast T^{*2} d\mu_f.\\
\end{eqnarray*}
\end{coro}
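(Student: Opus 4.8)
The goal is Corollary~\ref{coro-mourre-est-egs}, which specializes Lemma~\ref{Mourre-type-est} to the case $\phi=f$ for an expanding gradient Ricci soliton. The plan is to substitute $\phi=f$ directly into the Mourre-type identity and then simplify each of the resulting terms using the soliton structure equations of Lemma~\ref{id-EGS} (in particular the identity \eqref{equ:4}).

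\begin{proof}
First I would apply Lemma~\ref{Mourre-type-est} with $\phi=f$, giving
\begin{eqnarray*}
<[H,A]T,T>_{L^2_f}&=&\int_{M}2\nabla^2f(\nabla T,\nabla T)+\frac{1}{2}<\nabla\Delta_f f,\nabla\arrowvert T\arrowvert^2>d\mu_f\\
&&+\int_M\Rm(g)(\nabla f,\cdot,\cdot,\cdot)\ast \nabla T\ast T+\nabla\Rm(g)\ast\nabla f\ast T^{*2} d\mu_f.
\end{eqnarray*}
The Hessian term is the place to invoke the soliton equation. Since $\Ric(g)+\nabla^{g,2}f=-g/2$, one has $\nabla^2f=-\Ric(g)-g/2$, so $2\nabla^2f(\nabla T,\nabla T)=-2\Ric(g)(\nabla T,\nabla T)-\arrowvert\nabla T\arrowvert^2$. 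The key observation is that $\arrowvert\nabla T\arrowvert^2$ recombines with $H$: integrating by parts against the weighted measure gives $\int_M\arrowvert\nabla T\arrowvert^2\,d\mu_f=\int_M<-\Delta_fT,T>\,d\mu_f=\int_M<HT,T>\,d\mu_f$. This is exactly what produces the leading $<HT,T>$ term in the statement, while the sign flip on $-2\Ric(g)(\nabla T,\nabla T)$ turns it into the desired $+2\Ric(g)(\nabla T,\nabla T)$ after I track the overall sign carefully.

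Next I would treat the middle term $\tfrac12<\nabla\Delta_f f,\nabla\arrowvert T\arrowvert^2>$. Here one uses the soliton identities for $\Delta_f f$; recalling that for an expander the trace of the soliton equation and the standard contracted identities yield an expression of the form $\Delta_f f=\tfrac{n}{2}-R$ (up to normalization), and more usefully the auxiliary potential $v$ satisfies $\arrowvert\nabla v\arrowvert^2$-type relations from Lemma~\ref{id-EGS}. The identity \eqref{equ:4} referenced in the corollary is precisely what lets me rewrite $\nabla\Delta_f f$ so that, after integrating this term by parts once more against $d\mu_f$, it contributes the $-\tfrac{v}{2}\arrowvert T\arrowvert^2$ term. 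Concretely I expect $\tfrac12\int_M<\nabla\Delta_f f,\nabla\arrowvert T\arrowvert^2>\,d\mu_f=-\tfrac12\int_M\Delta_f(\Delta_f f)\arrowvert T\arrowvert^2\,d\mu_f$, and the soliton identity evaluates $-\tfrac12\Delta_f(\Delta_f f)$ as $-v/2$.

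Finally, the curvature terms require only a cosmetic rewriting. Using the soliton contracted second Bianchi identity $\div\Rm(g)=\Rm(g)(\nabla f,\cdot,\cdot,\cdot)$ (equivalently $\nabla_i R_{jk}-\nabla_j R_{ik}=R_{ijk\ell}\nabla_\ell f$, which for expanders identifies the full divergence of the curvature tensor with its contraction against $\nabla f$), the term $\Rm(g)(\nabla f,\cdot,\cdot,\cdot)\ast\nabla T\ast T$ becomes $\div\Rm(g)\ast\nabla T\ast T$, matching the statement; the term $\nabla\Rm(g)\ast\nabla f\ast T^{*2}$ is already in the required $\ast$-notation form. Collecting the $<HT,T>$, $-v\arrowvert T\arrowvert^2/2$, $2\Ric(g)(\nabla T,\nabla T)$, and two curvature terms yields the claimed identity. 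The main obstacle I anticipate is purely bookkeeping: keeping the signs consistent through the two integrations by parts and verifying that the soliton identity \eqref{equ:4} produces exactly the factor $v$ rather than $v$ up to a lower-order error, so I would double-check the normalization of $\Delta_f f$ and of $v$ from the appendix before finalizing the constants.
\end{proof}
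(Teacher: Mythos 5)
Your overall strategy is exactly the paper's: apply Lemma \ref{Mourre-type-est} with $\phi=f$ and simplify using the soliton identities of Lemma \ref{id-EGS}. However, there is a genuine error at the central step. The paper's convention is $\Ric(g)+\nabla^{g,2}(-f)=-\frac{g}{2}$, which means
\begin{eqnarray*}
\nabla^{g,2}f=\Ric(g)+\frac{g}{2},
\end{eqnarray*}
consistent with identity (\ref{equ:1}), $\Delta_g f=\R_g+\frac{n}{2}$; your version $\nabla^2f=-\Ric(g)-\frac{g}{2}$ would instead give $\Delta f=-\R_g-\frac{n}{2}$. With the correct sign one gets $2\nabla^2f(\nabla T,\nabla T)=2\Ric(g)(\nabla T,\nabla T)+\arrowvert\nabla T\arrowvert^2$, and since $\int_M\arrowvert\nabla T\arrowvert^2d\mu_f=\int_M<HT,T>d\mu_f$ by integration by parts, both the $<HT,T>$ term and the $2\Ric(g)(\nabla T,\nabla T)$ term appear immediately with the correct positive signs. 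With your sign you obtain exactly the negatives of these two terms, and the appeal to a ``sign flip'' after ``tracking the overall sign carefully'' is not an argument: nothing else in the computation changes sign, so as written you would prove the identity with $-<HT,T>-2\Ric(g)(\nabla T,\nabla T)$, which is false. The fix is not bookkeeping but using the paper's convention for the soliton equation.

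A secondary confusion concerns which identity does what. Identity (\ref{equ:4}) is the divergence identity $\div_g\Rm(g)(Y,Z,T)=\Rm(g)(Y,Z,\nabla f,T)$; its only role here is to convert $\Rm(g)(\nabla f,\cdot,\cdot,\cdot)\ast\nabla T\ast T$ into $\div\Rm(g)\ast\nabla T\ast T$ (which you do correctly in your last paragraph, under the name contracted second Bianchi identity), and it has nothing to do with rewriting $\nabla\Delta_ff$. For the middle term the relevant facts are (\ref{equ:1}) and (\ref{equ:3}): they give $\Delta_ff=\Delta f+\arrowvert\nabla f\arrowvert^2=\R_g+\frac{n}{2}+(f+\mu(g)-\R_g)=v$ exactly, so that $\Delta_f(\Delta_ff)=\Delta_fv=v$; your quoted formula ``$\Delta_ff=\frac{n}{2}-\R$ up to normalization'' is a shrinker-type formula and is wrong here, even though the displayed integration by parts you then write down, $\frac12\int_M<\nabla\Delta_ff,\nabla\arrowvert T\arrowvert^2>d\mu_f=-\frac12\int_M\Delta_f(\Delta_ff)\arrowvert T\arrowvert^2d\mu_f=-\frac12\int_Mv\arrowvert T\arrowvert^2d\mu_f$, is correct. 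So the middle and curvature terms are sound once the references are straightened out; the Hessian sign issue is the real gap.
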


Define for $\alpha\geq0$ and some real number $\lambda$,
\begin{eqnarray*}
F_{\alpha}:=\frac{v}{2}+\frac{\alpha-2\lambda+n/2}{2}\ln v.
\end{eqnarray*}

\begin{defn}
Define for any nonnegative integer $k$, and any tensor $h$ (with compact support),
\begin{eqnarray*}
I_{\alpha}^k(h):=I_{\alpha}^0(v^{k/2}\nabla^k h)=\int_Mv^k\arrowvert \nabla^kh\arrowvert^2e^{2F_{\alpha}}d\mu_f,\\
J^0_{\alpha}(h):=\int_M\arrowvert\Delta_fh\arrowvert^2e^{2F_{\alpha}}d\mu_f.
\end{eqnarray*}
\end{defn}

The next proposition controls the weighted integral norms of the two first covariant derivatives  of a tensor $h$ with compact support in terms of the weighted integral norms of $h$ and $\Delta_fh$.
 
\begin{prop}\label{prop-cov-der-est-wei-nor}
Let $h$ be a compactly supported smooth tensor. Then,
\begin{eqnarray*}
&&\int_M\arrowvert\nabla h\arrowvert^2e^{2F_{\alpha}}d\mu_f=I^1_{\alpha-1}(h)\lesssim J^0_{\alpha}(h)+I^0_{\alpha+1}(h)+\alpha I^0_{\alpha}(h)+\alpha^2 I^0_{\alpha-1}(h),\\
&&\int_M\arrowvert\nabla^2h\arrowvert^2e^{2F_{\alpha}}d\mu_f=I^2_{\alpha-2}(h)\lesssim J^0_{\alpha+1}(h)+\alpha J^0_{\alpha}(h)+\alpha^2 J^0_{\alpha-1}(h)+\sum_{i=-2}^2\alpha^{2-i}I^0_{\alpha+i}(h).
\end{eqnarray*}

\end{prop}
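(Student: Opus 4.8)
The plan is to integrate by parts, using the weighted measure $d\mu_f$, to trade one factor of $\Delta_f$ against two covariant derivatives, and to keep careful track of the weights $e^{2F_\alpha}$ and the powers of $v$ that arise when derivatives hit the weight. For the first estimate I would start from the identity
$$\int_M \arrowvert\nabla h\arrowvert^2 e^{2F_\alpha}\,d\mu_f = -\int_M \langle h,\Delta_f h\rangle e^{2F_\alpha}\,d\mu_f - \int_M \langle h,\nabla h\rangle\ast\nabla(e^{2F_\alpha})\,d\mu_f,$$
obtained by applying the weighted Stokes theorem (integration against $d\mu_f=e^f d\mu_g$ turns $\Delta$ into $\Delta_f$) to the test object $\langle h,\nabla h\rangle e^{2F_\alpha}$. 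Since $\nabla F_\alpha = \tfrac{1}{2}\nabla v + \tfrac{\alpha-2\lambda+n/2}{2}\nabla\ln v$, the weight-derivative term produces a factor controlled by $\arrowvert\nabla v\arrowvert$ together with an explicit $\alpha$ coefficient from the $\ln v$ part. Bounding the first term on the right by Cauchy--Schwarz gives $J^0_\alpha(h)$ paired against $I^0_\alpha(h)$, and bounding the cross term by Cauchy--Schwarz (splitting the $\alpha$-free and $\alpha$-linear pieces of $\nabla F_\alpha$ separately) absorbs the gradient term $\int\arrowvert\nabla h\arrowvert^2 e^{2F_\alpha}$ back into the left-hand side, leaving exactly the weighted norms $I^0_{\alpha+1}(h)$, $\alpha I^0_{\alpha}(h)$ and $\alpha^2 I^0_{\alpha-1}(h)$ on the right after using the soliton identities (lemma \ref{id-EGS}) that give $\arrowvert\nabla v\arrowvert^2\asymp v$ and the asymptotics $\nabla^k\bar V_0=\textit{O}(v^{-1-k/2})$.

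For the second estimate I would iterate the same scheme one level up, applying the $k=1$ case to each component of $\nabla h$ or, more cleanly, integrating by parts twice so that $\int\arrowvert\nabla^2 h\arrowvert^2 e^{2F_\alpha}$ is compared to $\int\arrowvert\Delta_f h\arrowvert^2 e^{2F_{\alpha+1}}$ through the Bochner--Weitzenböck formula. The key identity here is that $\arrowvert\nabla^2 h\arrowvert^2$ and $\arrowvert\Delta h\arrowvert^2$ differ by curvature terms $\Rm(g)\ast\nabla h\ast\nabla h$, $\nabla\Rm(g)\ast h\ast\nabla h$, and total-divergence terms; on an expander the curvature is controlled at infinity by $\HypI$ or the general decay so these contribute lower-order weighted integrals already accounted for by the first estimate applied at shifted parameters. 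The weight shift from $F_\alpha$ to $F_{\alpha+1}$ is what produces the extra factor of $v$ in $J^0_{\alpha+1}(h)$, while the $\alpha$ and $\alpha^2$ coefficients again come from differentiating the $\ln v$ part of the weight, and the full sum $\sum_{i=-2}^2 \alpha^{2-i}I^0_{\alpha+i}(h)$ collects all the resulting zeroth-order remainders after repeated Cauchy--Schwarz absorptions.

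The main obstacle I expect is bookkeeping rather than conceptual: one must organize the Cauchy--Schwarz splittings so that every gradient-of-weight factor is dispatched into the correct weighted norm with the correct power of $\alpha$, and ensure that the highest-derivative term on the right ($\int\arrowvert\nabla^2 h\arrowvert^2 e^{2F_\alpha}$ in the second estimate) can genuinely be absorbed into the left-hand side with a constant independent of $\alpha$. This absorption is the delicate point, because the Bochner identity a priori returns $\arrowvert\nabla^2 h\arrowvert^2$ rather than $\arrowvert\Delta_f h\arrowvert^2$; I would handle it by using the expander equation $\Ric_f = -g/2$ to control the curvature contractions, so that the difference $\arrowvert\nabla^2 h\arrowvert^2 - \arrowvert\Delta h\arrowvert^2$ is bounded by terms of strictly lower weight-order and the leading term closes. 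Throughout, the soliton identities of the appendix and the asymptotics $\arrowvert\nabla v\arrowvert^2\asymp v$ are what convert the geometric error terms into the clean powers of $v$ that define the functionals $I^0_{\alpha+i}$ and $J^0_{\alpha+j}$.
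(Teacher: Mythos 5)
Your proposal is correct and follows essentially the paper's own proof: the first estimate comes from integrating $\langle -\Delta_f h,h\rangle e^{2F_\alpha}$ by parts and controlling the weight terms via the soliton identities (the paper handles the cross term by a second integration by parts, turning it into $\tfrac{1}{2}\arrowvert h\arrowvert^2\Delta_f e^{2F_\alpha}$, rather than by your Cauchy--Schwarz absorption --- an immaterial difference), and the second comes from the Bochner/commutation identity $\Delta_f\arrowvert\nabla h\arrowvert^2=2\arrowvert\nabla^2h\arrowvert^2+2\langle\Delta_f\nabla h,\nabla h\rangle$ with $[\Delta_f,\nabla]h=\Ric_f(\nabla h)+\Rm(g)\ast\nabla h+\nabla\Rm(g)\ast h$ and $\Ric_f=-g/2$, followed by the first estimate applied at the shifted parameters $\alpha+1,\alpha,\alpha-1$, exactly as you describe. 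One caveat: of your two suggested variants for the second estimate, only the Bochner route closes --- applying the $k=1$ \emph{statement} to $\nabla h$ would produce $J^0_\alpha(\nabla h)=\int_M\arrowvert\Delta_f\nabla h\arrowvert^2e^{2F_\alpha}d\mu_f$, which after commutation contains $\int_M\arrowvert\nabla\Delta_f h\arrowvert^2e^{2F_\alpha}d\mu_f$ and cannot be bounded by the admissible terms $J^0_\bullet(h)$, $I^0_\bullet(h)$ without escalating to higher derivatives; the paper avoids this because the term $\langle\nabla\Delta_f h,\nabla h\rangle$ is integrated by parts onto $\arrowvert\Delta_f h\arrowvert^2$ \emph{before} any Cauchy--Schwarz is applied.
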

\begin{proof}By integration by parts,
\begin{eqnarray*}
\int_M<-\Delta_fh,h>e^{2F_{\alpha}}d\mu_f&=&\int_M\left<\nabla h,\nabla (e^{2F_{\alpha}}h)\right>d\mu_f\\
&=&\int_M\arrowvert\nabla h\arrowvert^2e^{2F_{\alpha}}d\mu_f+\int_M\left<\frac{\nabla \arrowvert h\arrowvert^2}{2},\nabla e^{2F_{\alpha}}\right>d\mu_f\\
&=&\int_M\arrowvert\nabla h\arrowvert^2e^{2F_{\alpha}}d\mu_f-\int_M\frac{ \arrowvert h\arrowvert^2}{2}\Delta_f e^{2F_{\alpha}}d\mu_f,
\end{eqnarray*}
which implies,
\begin{eqnarray*}
\int_M\arrowvert\nabla h\arrowvert^2e^{2F_{\alpha}}d\mu_f&\lesssim& \int_M\arrowvert\Delta_fh\arrowvert^2e^{2F_{\alpha}}d\mu_f+I^0_{\alpha+1}(h)+\alpha I^0_{\alpha}(h)+\alpha^2 I^0_{\alpha-1}(h)\\
&\lesssim&J^0_{\alpha}(h)+I^0_{\alpha+1}(h)+\alpha I^0_{\alpha}(h)+\alpha^2 I^0_{\alpha-1}(h).
\end{eqnarray*}

Now,
\begin{eqnarray*}
\Delta_f\arrowvert\nabla h\arrowvert^2&=&2\arrowvert\nabla^2h\arrowvert^2+2<\Delta_f\nabla h,\nabla h>\\
&=&2\arrowvert\nabla^2h\arrowvert^2+2<[\Delta_f,\nabla] h,\nabla h>+<\nabla\Delta_fh,\nabla h>\\
&=&2\arrowvert\nabla^2h\arrowvert^2+< (-1+\textit{O}(v^{-1}))\nabla h+\textit{O}(v^{-3/2})h,\nabla h>+<\nabla\Delta_fh,\nabla h>.
\end{eqnarray*}
Therefore,
\begin{eqnarray*}
\int_M\arrowvert\nabla^2h\arrowvert^2e^{2F_{\alpha}}d\mu_f&\lesssim&\int_M\left(v+\alpha+\frac{\alpha^2}{v}\right)\arrowvert\nabla h\arrowvert^2e^{2F_{\alpha}}d\mu_f+\int_M\arrowvert\Delta_fh\arrowvert^2e^{2F_{\alpha}}d\mu_f+I^0_{\alpha}(h)\\
&\lesssim&\left[J^0_{\alpha+1}(h)+I^0_{\alpha+2}(h)+\alpha I^0_{\alpha+1}(h)+\alpha^2 I^0_{\alpha}(h)\right]\\
&&+\alpha\left[J^0_{\alpha}(h)+I^0_{\alpha+1}(h)+\alpha I^0_{\alpha}(h)+\alpha^2 I^0_{\alpha-1}(h)\right]\\
&&+\alpha^2\left[J^0_{\alpha-1}(h)+I^0_{\alpha}(h)+\alpha I^0_{\alpha-1}(h)+\alpha^2 I^0_{\alpha-2}(h)\right]\\
&&+J^0_{\alpha}(h)+I^0_{\alpha}(h)\\
&\lesssim&J^0_{\alpha+1}(h)+\alpha J^0_{\alpha}(h)+\alpha^2 J^0_{\alpha-1}(h)+\sum_{i=-2}^2\alpha^{2-i}I^0_{\alpha+i}(h).
\end{eqnarray*}

\end{proof}
The next lemma is the key result to prove unique continuation results at infinity for Ricci expanders.

\begin{lemma}\label{lemm-cruc-C^0}
Let $(M^n,g,\nabla^g f)$ be an expanding gradient Ricci soliton asymptotically conical satisfying $\HypI$ (respectively $\HypII$). Then there exist positive constants $c_1$ and $c_2$ and a compact set $K\subset M$ such that for $\alpha\gtrsim \A^0_g(\Ric(g))$ (respectively for any positive $\alpha$) and for any tensor $h$ compactly supported outside $K$,
\begin{eqnarray*}
\int_M\left(c_1\alpha+\frac{c_1^2\alpha^2}{v^2}\right)\arrowvert h\arrowvert^2e^{2F_{\alpha}}d\mu_f\leq \left(1+\frac{c_2}{\alpha}\right) \|[(H-\lambda)h]e^{F_{\alpha}}\|^2_{L^2_f},
\end{eqnarray*}
i.e.
\begin{eqnarray*}
\alpha I^0_{\alpha}(h)+\alpha^2 I^0_{\alpha-2}(h)\lesssim (1+\alpha^{-1}) \|[(H-\lambda)h]e^{F_{\alpha}}\|^2_{L^2_f}.
\end{eqnarray*}

\end{lemma}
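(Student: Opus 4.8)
The plan is to run the conjugated positive–commutator scheme of Donnelly \cite{Don-Ale-Spe}, as announced in the introduction. Set $u:=e^{F_{\alpha}}h$ and introduce the conjugated operator $P:=e^{F_{\alpha}}(H-\lambda)e^{-F_{\alpha}}$, so that the right–hand side of the claimed inequality is $\|Pu\|_{L^2_f}^2$, while $I^0_{\alpha}(h)=\|u\|_{L^2_f}^2$ and $I^0_{\alpha-2}(h)=\int_M v^{-2}|u|^2\,d\mu_f$. A direct computation, using that $F_{\alpha}$ is a scalar and $\Delta_f e^{-F_{\alpha}}=(|\nabla F_{\alpha}|^2-\Delta_f F_{\alpha})e^{-F_{\alpha}}$, yields the decomposition $P=P_s+P_a$ into its symmetric and antisymmetric parts for $d\mu_f$, namely $P_s=H-|\nabla F_{\alpha}|^2-\lambda$ and $P_a=2A$ with $A=\nabla_{\nabla F_{\alpha}}+\tfrac12\Delta_f F_{\alpha}$ as in Lemma \ref{lemma-comm-est-don}. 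Since $P_s$ is symmetric and $P_a$ antisymmetric, the cross term is a commutator and $$\|Pu\|_{L^2_f}^2=\|P_su\|_{L^2_f}^2+\|P_au\|_{L^2_f}^2+\langle[P_s,P_a]u,u\rangle_{L^2_f}.$$ Everything reduces to bounding this combination below by $\int_M(\alpha+\alpha^2 v^{-2})|u|^2\,d\mu_f$, up to the factor $1+\alpha^{-1}$.

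The core is the evaluation of $\langle[P_s,P_a]u,u\rangle$. Writing $[P_s,P_a]=2[H,A]-2[|\nabla F_{\alpha}|^2,A]$ and using $[|\nabla F_{\alpha}|^2,A]=-\langle\nabla F_{\alpha},\nabla|\nabla F_{\alpha}|^2\rangle$, the first bracket is supplied by the Mourre identity of Lemma \ref{Mourre-type-est} with $\phi=F_{\alpha}$. I would then feed in the soliton identities of appendix \ref{sol-equ-sec} — $\nabla^2 v=\Ric(g)+\tfrac12 g$, $\Delta_f v=v+\tfrac n2$, $|\nabla v|^2=v-\R_g$ — together with $\nabla F_{\alpha}=\bigl(\tfrac12+\tfrac{\alpha-2\lambda+n/2}{2v}\bigr)\nabla v$, which shows in particular that $\nabla F_{\alpha}$ is radial (parallel to $\nabla f$) and that $|\nabla F_{\alpha}|^2=\tfrac v4+\tfrac{\alpha-2\lambda+n/2}{2}+O(\alpha^2 v^{-1})$. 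The commutator then takes the shape $$\langle[P_s,P_a]u,u\rangle=\int_M 4\,\nabla^2 F_{\alpha}(\nabla u,\nabla u)+V_{\alpha}|u|^2\,d\mu_f+(\text{curvature errors}),$$ where, exactly as in Corollary \ref{coro-mourre-est-egs}, the potential $V_{\alpha}$ has a wrong–sign leading part of virial type (the analogue of the $-\tfrac v2|T|^2$ there) but a good cross term of size $\simeq\alpha$ coming from the cross product in $|\nabla F_{\alpha}|^2$. The decisive algebraic point, which I would verify by a careful expansion, is that the wrong–sign potential and the tangential gradient contributions are matched by the two squared terms $\|P_su\|^2$ and $\|P_au\|^2$: the tuning of the coefficient $\tfrac{\alpha-2\lambda+n/2}{2}$ of $\ln v$ in $F_{\alpha}$ (adapted to the borderline decay rate $f^{\lambda-n/2}e^{-f}$ of the homogeneous equation) makes the whole sum a nonnegative form bounded below by $\int_M(\alpha+\alpha^2 v^{-2})|u|^2$, once the support of $h$ is pushed into a region $\{v\ge v_K\}$ where the $O(v^{-1})$ remainders are negligible; this is precisely why a compact set $K$ must be excised. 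The surviving gradient terms $\int_M|\nabla u|^2$ are reabsorbed through Proposition \ref{prop-cov-der-est-wei-nor}, after translating between the weighted norms of $\nabla u$ and of $\nabla h$, which differ only by the lower–order factor $\nabla F_{\alpha}$.

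The curvature errors are those of Corollary \ref{coro-mourre-est-egs}, namely $\Rm(g)(\nabla F_{\alpha},\cdot,\cdot,\cdot)\ast\nabla u\ast u$, $\div\Rm(g)\ast\nabla u\ast u$ and $\nabla\Rm(g)\ast\nabla F_{\alpha}\ast u^{*2}$; since $\nabla F_{\alpha}$ is radial, these involve only radial contractions of $\Rm(g)$ and $\nabla\Rm(g)$, which converge at the rate $f_k$ of Definition \ref{defn-asy-con-egs} to those of the asymptotic cone and hence tend to $0$, a metric cone being flat in its radial planes. The genuinely surviving contribution is the Ricci term $2\int_M\Ric(g)(\nabla u,\nabla u)$ hidden in the Hessian, whose size is governed by $\A^0_g(\Ric(g))$, and here the hypotheses split; this is the main obstacle. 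Under $\HypI$ the Ricci term is bounded by $\A^0_g(\Ric(g))$ times the positive terms already produced, so it is absorbed precisely when $\alpha\gtrsim\A^0_g(\Ric(g))$, giving the stated threshold. Under $\HypII$ no smallness is available and one argues by signs, with genuinely different inputs: when $\Ric(g)\ge0$ outside a compact set the term $2\int_M\Ric(g)(\nabla u,\nabla u)$ is kept as a bonus and any $\alpha>0$ works; in the Einstein case the condition $\R_{g_{C(X)}}>0$ supplies, through the Weitzenböck structure of the cross–sections $\{v\}\times X$, a strictly positive tangential operator dominating the error for all $\alpha>0$; and the three–dimensional case reduces to these since there $\Rm(g)$ is an algebraic function of $\Ric(g)$ and $\R_g$. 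I expect the hardest part to be making the positivity of the combined quadratic form quantitative and uniform in $\alpha$ — turning the qualitative Mourre/pseudoconvexity positivity into the explicit lower bound $\alpha I^0_{\alpha}(h)+\alpha^2 I^0_{\alpha-2}(h)$ — while keeping every curvature error subordinate to it in each of the three regimes of $\HypII$.
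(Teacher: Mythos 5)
Your conjugation algebra is correct and in fact coincides with the paper's: writing $T_{\alpha}=e^{F_{\alpha}}h$, your $P_a=2\nabla_{\nabla F_{\alpha}}+\Delta_fF_{\alpha}$ is exactly the antisymmetric operator $B=2w_{\alpha}A+\langle\nabla f,\nabla w_{\alpha}\rangle$ of the paper's proof, $P_s=H-\arrowvert\nabla F_{\alpha}\arrowvert^2-\lambda$ is its symmetric complement, and the identity $\|Pu\|^2_{L^2_f}=\|P_su\|^2_{L^2_f}+\|P_au\|^2_{L^2_f}+\langle[P_s,P_a]u,u\rangle_{L^2_f}$ is valid. The genuine gap is that the quantitative heart of the lemma is precisely the ``careful expansion'' you defer, and it does not go the way you describe. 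Using the soliton identities ($\nabla^2v=\Ric(g)+\tfrac{g}{2}$, $\Delta_fv=v$, $\arrowvert\nabla v\arrowvert^2=v-\R_g-\tfrac n2$), the zeroth-order potential of your commutator is
\begin{eqnarray*}
2\langle\nabla F_{\alpha},\nabla\arrowvert\nabla F_{\alpha}\arrowvert^2\rangle-\Delta_f(\Delta_fF_{\alpha})=-\frac v4+\frac c4-\frac{c^2}{4v}+\dots,\qquad c:=\alpha-2\lambda+\frac n2,
\end{eqnarray*}
so $\langle[P_s,P_a]u,u\rangle$ is \emph{negative} at leading order: the wrong-sign virial term is not cancelled inside the commutator, and the deficit $-\tfrac v4\arrowvert u\arrowvert^2$, far larger than $\alpha$, must be recovered from $\|P_su\|^2$ and $\|P_au\|^2$ by further completions of squares and integrations by parts that you never perform. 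Worse, the Hessian term $4\nabla^2F_{\alpha}(\nabla u,\nabla u)$ contains $-\tfrac{2c}{v^2}\arrowvert\nabla_{\nabla v}u\arrowvert^2$, an $\alpha$-dependent negative radial gradient term which, for radial gradients, overwhelms the positive part $(1+\tfrac cv)\arrowvert\nabla u\arrowvert^2$ on the region $\{v< c\}$; that region cannot be excised, because the compact set $K$ must be independent of $\alpha$ (theorems \ref{theo-fin-step}, \ref{theo-sharp-dec-eig-tens} and \ref{theo-main-S} send $\alpha\to\infty$ with a fixed cutoff). Extracting the uniform coefficients $\alpha$ and $\alpha^2/v^2$ from this bookkeeping is the whole proof, and it is missing. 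The paper's argument is arranged so that none of these difficulties arise: it commutes $H$ with the $\alpha$-independent soliton field $A=\nabla_{\nabla f}+\tfrac12\Delta_ff$, so the Hessian contribution is $\arrowvert\nabla T_{\alpha}\arrowvert^2+2\Ric(g)(\nabla T_{\alpha},\nabla T_{\alpha})$ (no $\alpha$, no negative radial part); it evaluates $\langle[H,A]T_{\alpha},T_{\alpha}\rangle=2\langle HT_{\alpha},AT_{\alpha}\rangle$, absorbs the Cauchy--Schwarz term by $-4\int_Mw_{\alpha}\arrowvert AT_{\alpha}\arrowvert^2d\mu_f\leq-\|AT_{\alpha}\|^2_{L^2_f}$ (using $w_{\alpha}>\tfrac14$), and concentrates all the $\alpha$-dependence in one scalar potential computed exactly: $\arrowvert\nabla F_{\alpha}\arrowvert^2+\nabla f\cdot(\arrowvert\nabla F_{\alpha}\arrowvert^2-\nabla f\cdot\nabla w_{\alpha})-\tfrac v2=-\lambda+\tfrac\alpha2(1+\textit{O}(v^{-1}))+\tfrac n4\tfrac{\alpha^2}{v^2}(1+\textit{O}(v^{-1}))$.

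Two further inaccuracies. In case (\ref{con-1-hyp-I}) of $\HypII$ the mechanism is not a ``Weitzenb\"ock structure of the cross-sections'': the paper uses the decay estimate of \cite{Der-Asy-Com-Egs} for Einstein links, $\Ric(g)-\tfrac{\R_g}{n-1}(g-\mathbf{n}\otimes\mathbf{n})=\textit{O}(v^{-2})$, together with $\R_g\geq0$ outside a compact set, to obtain $\Ric(g)(\nabla T_{\alpha},\nabla T_{\alpha})\gtrsim-v^{-2}\arrowvert\nabla T_{\alpha}\arrowvert^2$; it is the same decay statement (not the algebraic expression of $\Rm(g)$ through $\Ric(g)$) that is invoked in dimension three. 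Finally, the curvature errors do not simply ``tend to zero'': they have to be converted, through proposition \ref{prop-cov-der-est-wei-nor}, into $J^0_{\alpha-1}(h)+\epsilon I^0_{\alpha}(h)+\alpha I^0_{\alpha-1}(h)+\epsilon\alpha^2I^0_{\alpha-2}(h)$ and absorbed quantitatively into the main terms; this absorption is exactly where the threshold $\alpha\gtrsim\A^0_g(\Ric(g))$ under $\HypI$ and the factor $1+c_2/\alpha$ in the statement come from.
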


\begin{proof}
Consider $T_{\alpha}:=e^{F_{\alpha}}h$ where $h$ is a tensor compactly supported outside a sufficiently large compact set (independent of $\alpha$). 
First of all, applying corollary \ref{coro-mourre-est-egs} to $T_{\alpha}$ gives
\begin{eqnarray*}
&&<[H,A]T_{\alpha},T_{\alpha}>_{L^2_f}=\int_{M}<H T_{\alpha},T_{\alpha}>-\frac{v}{2}\arrowvert T_{\alpha}\arrowvert^2d\mu_f\\
&&+2\int_M\Ric(g)(\nabla T_{\alpha},\nabla T_{\alpha})+\div\Rm(g)\ast\nabla T_{\alpha}\ast T_{\alpha} +\nabla\Rm(g)\ast\nabla f\ast T_{\alpha}^{*2} d\mu_f.\\
\end{eqnarray*}

Define as in \cite{Don-Ale-Spe},
\begin{eqnarray*}
\nabla F_{\alpha}=\left(\frac{1}{2}+\frac{\alpha+n/2-2\lambda}{2v}\right)\nabla v=:w_{\alpha}\nabla v.
\end{eqnarray*}
Note that $w_{\alpha}>1/4$ for any nonnegative $\alpha$, outside a compact set independent of $\alpha$ but depending on $\lambda$.
\begin{eqnarray*}
H T_{\alpha}&=&(Hh)e^{F_{\alpha}}-2\nabla_{\nabla F_{\alpha}} T_{\alpha}+2\arrowvert\nabla F_{\alpha}\arrowvert^2T_{\alpha}-(\Delta_fe^{F_{\alpha}})h\\
&=&(Hh)e^{F_{\alpha}}+\arrowvert\nabla F_{\alpha}\arrowvert^2T_{\alpha}-2w_{\alpha}AT_{\alpha}-<\nabla f,\nabla w_{\alpha}>T_{\alpha}.
\end{eqnarray*}
Note that the operator $BT_{\alpha}:=2w_{\alpha}AT_{\alpha}+<\nabla f,\nabla w_{\alpha}>T_{\alpha}$ is antisymmetric.

On the other hand,
\begin{eqnarray*}
&&2<HT_{\alpha},AT_{\alpha}>_{L^2_f}\leq2 \left<AT_{\alpha},(Hh)e^{F_{\alpha}}+\arrowvert\nabla F_{\alpha}\arrowvert^2T_{\alpha}-<\nabla f,\nabla w_{\alpha}>T_{\alpha}\right>_{L^2_f}-\arrowvert AT_{\alpha}\arrowvert_{L^2_f},\\
&&2\left<AT_{\alpha},\arrowvert\nabla F_{\alpha}\arrowvert^2T_{\alpha}-<\nabla f,\nabla w_{\alpha}>T_{\alpha}\right>_{L^2_f}=<T_{\alpha},\nabla f\cdot(\nabla f\cdot\nabla w_{\alpha}-\arrowvert\nabla F_{\alpha}\arrowvert^2)T_{\alpha}>_{L^2_f}.
\end{eqnarray*}
Therefore,
\begin{eqnarray*}
2<HT_{\alpha},AT_{\alpha}>_{L^2_f}\leq \|[(H-\lambda)h]e^{F_{\alpha}}\|^2_{L^2_f}+<T_{\alpha},\nabla f\cdot(\nabla f\cdot\nabla w_{\alpha}-\arrowvert\nabla F_{\alpha}\arrowvert^2)T_{\alpha}>_{L^2_f}.
\end{eqnarray*}
Finally,
\begin{eqnarray*}
&&\int_M<Hh,h>e^{2F_{\alpha}}+\left(\arrowvert\nabla F_{\alpha}\arrowvert^2+\nabla f\cdot(\arrowvert\nabla F_{\alpha}\arrowvert^2-\nabla f\cdot\nabla w_{\alpha})-\frac{v}{2}\right)\arrowvert T_{\alpha}\arrowvert^2d\mu_f\leq\\
&&  \|[(H-\lambda)h]e^{F_{\alpha}}\|^2_{L^2_f}-2\int_M\Ric(g)(\nabla T_{\alpha},\nabla T_{\alpha})d\mu_f\\
&&+c(n)\int_M\arrowvert\nabla\Rm(g)\arrowvert\arrowvert\nabla T_{\alpha}\arrowvert\arrowvert T_{\alpha}\arrowvert +\arrowvert\nabla\Rm(g)\arrowvert\arrowvert\nabla f\arrowvert\arrowvert T_{\alpha}\arrowvert^{2} d\mu_f.
\end{eqnarray*}

We compute the left hand term as follows by using the soliton identities from lemma \ref{id-EGS} :
\begin{eqnarray*}
&&\nabla F_{\alpha}=\frac{1}{2}\left(1+\frac{c(\alpha,n,\lambda)}{v}\right)\nabla v,\quad c(\alpha,n,\lambda):=\alpha+\frac{n}{2}-2\lambda,\\
&&\arrowvert\nabla F_{\alpha}\arrowvert^2=\frac{1}{4}\left[1+\frac{2c(\alpha,n,\lambda)}{v}+\frac{c(\alpha,n,\lambda)^2}{v^2}\right]\arrowvert\nabla f\arrowvert^2,\\
&&\nabla f\cdot\arrowvert\nabla F_{\alpha}\arrowvert^2=\frac{\nabla^2f(\nabla f,\nabla f)}{2}\left[1+\frac{2c(\alpha,n,\lambda)}{v}+\frac{c(\alpha,n,\lambda)^2}{v^2}\right]-\frac{c(\alpha,n,\lambda)\arrowvert\nabla f\arrowvert^4}{2v^2}\left[1+\frac{c(\alpha,n,\lambda)}{v}\right],\\
&&\nabla f\cdot\nabla f\cdot\nabla w_{\alpha} =-\frac{c(\alpha,n,\lambda)}{2}\nabla f\cdot\left(\frac{\arrowvert\nabla f\arrowvert^2}{v^2}\right)=-\frac{c(\alpha,n,\lambda)}{v^2}\left[\nabla^2f(\nabla f,\nabla f)-\frac{\arrowvert\nabla f\arrowvert^4}{v}\right]\\
&&\arrowvert\nabla F_{\alpha}\arrowvert^2+\nabla f\cdot(\arrowvert\nabla F_{\alpha}\arrowvert^2-\nabla f\cdot\nabla w_{\alpha})-\frac{v}{2}=\arrowvert\nabla f\arrowvert^2\frac{1+\Ric(\mathbf{n},\mathbf{n})}{2}\left[1+\frac{2c(\alpha,n,\lambda)}{v}+\frac{c(\alpha,n,\lambda)^2}{v^2}\right]\\
&&-\frac{c(\alpha,n,\lambda)\arrowvert\nabla f\arrowvert^4}{2v^2}\left[1+\frac{c(\alpha,n,\lambda)}{v}\right]+\arrowvert\nabla f\arrowvert^2\frac{c(\alpha,n,\lambda)}{v^2}\left[\frac{1}{2}+\Ric(\mathbf{n},\mathbf{n})-\frac{\arrowvert\nabla f\arrowvert^2}{v}\right]-\frac{(\arrowvert\nabla f\arrowvert^2+\Delta f)}{2}\\
&&=\frac{c(\alpha,n,\lambda)\arrowvert\nabla f\arrowvert^2}{v}\left(1-\frac{\arrowvert\nabla f\arrowvert^2}{2v}\right)-\frac{n}{4}+\textit{O}(v^{-1})\\
&&+\frac{c(\alpha,n,\lambda)\arrowvert\nabla f\arrowvert^2}{v^2}\left(\frac{c(\alpha,n,\lambda)}{2}-\frac{c(\alpha,n,\lambda)\arrowvert\nabla f\arrowvert^2}{2v}+\left(\frac{1}{2}-\frac{\arrowvert\nabla f\arrowvert^2}{v}\right)\right)\\
&&+\textit{O}(v^{-2})\arrowvert\nabla f\arrowvert^2\left[1+\frac{2c(\alpha,n,\lambda)}{v}+\frac{c(\alpha,n,\lambda)^2+c(\alpha,n,\lambda)}{v^2}\right]\\
&&=\frac{c(\alpha,n,\lambda)}{2}\left(1-\frac{\Delta f}{v}\right)\left(1+\frac{\Delta f}{v}\right)-\frac{n}{4}+\textit{O}(v^{-1})\\
&&+\frac{c(\alpha,n,\lambda)\arrowvert\nabla f\arrowvert^2}{v^2}\left(\frac{c(\alpha,n,\lambda)}{2}\frac{\Delta f}{v}-\left(\frac{1}{2}-\frac{\Delta f}{v}\right)\right)\\
&&+\textit{O}(v^{-2})\arrowvert\nabla f\arrowvert^2\left[1+\frac{2c(\alpha,n,\lambda)}{v}+\frac{c(\alpha,n,\lambda)^2+c(\alpha,n,\lambda)}{v^2}\right]\\
&&=\frac{c(\alpha,n,\lambda)}{2}-\frac{n}{4}+(c(\alpha,n,\lambda)+1)\textit{O}(v^{-1})+\frac{c(\alpha,n,\lambda)^2\arrowvert\nabla f\arrowvert^2\Delta f}{2v^3}+c(\alpha,n,\lambda)^2\textit{O}(v^{-3}),\\
&&=-\lambda+\frac{\alpha}{2}(1+\textit{O}(v^{-1}))+\frac{n}{4}\frac{\alpha^2}{v^2}(1+\textit{O}(v^{-1})).
\end{eqnarray*}

Therefore, there exists a compact set $K\subset M$ such that for any positive $\alpha$ and for any tensor compactly supported outside $K$,
\begin{eqnarray*}
&&\int_M<(H-\lambda)h,h>e^{2F_{\alpha}}d\mu_f+\alpha I^0_{\alpha}+\frac{n}{4}\alpha^2I^0_{\alpha-2}(h)\lesssim\|[(H-\lambda)h]e^{F_{\alpha}}\|^2_{L^2_f}\\&&+\int_M-2\Ric(g)(\nabla T_{\alpha},\nabla T_{\alpha})+\arrowvert\nabla\Rm(g)\arrowvert\arrowvert\nabla T_{\alpha}\arrowvert\arrowvert T_{\alpha}\arrowvert +\arrowvert\nabla\Rm(g)\arrowvert\arrowvert\nabla f\arrowvert\arrowvert T_{\alpha}\arrowvert^{2} d\mu_f.
\end{eqnarray*}
or, by the Cauchy-Schwarz inequality,
\begin{eqnarray*}
&&\alpha I^0_{\alpha}+\alpha^2I^0_{\alpha-2}(h)\lesssim \left(1+\alpha^{-1}\right) \|[(H-\lambda)h]e^{F_{\alpha}}\|^2_{L^2_f}\\
&&+\int_M-2\Ric(g)(\nabla T_{\alpha},\nabla T_{\alpha})+\arrowvert\nabla\Rm(g)\arrowvert\arrowvert\nabla T_{\alpha}\arrowvert\arrowvert T_{\alpha}\arrowvert +\arrowvert\nabla\Rm(g)\arrowvert\arrowvert\nabla f\arrowvert\arrowvert T_{\alpha}\arrowvert^{2} d\mu_f.
\end{eqnarray*}

Now, we handle the term involving $\Ric(g)(\nabla T_{\alpha},\nabla T_{\alpha})$ by using $\HypI$ or $\HypII$ as follows : 
Assume $\HypI$ holds. Then,
\begin{eqnarray*}
\int_M\arrowvert\Ric(g)\arrowvert\arrowvert\nabla T_{\alpha}\arrowvert^{2}d\mu_f&\lesssim& \A^0_g(\Ric(g))\int_M\arrowvert\nabla T_{\alpha}\arrowvert^2v^{-1}d\mu_f\\
&\lesssim&\A^0_g(\Ric(g))\int_M\left[\arrowvert\nabla h\arrowvert^2+\arrowvert\nabla F_{\alpha}\arrowvert^2\arrowvert h\arrowvert^2+\left<\nabla\arrowvert h\arrowvert^2,\nabla F_{\alpha}\right>\right]e^{2F_{\alpha}}v^{-1}d\mu_f\\
&=&\A^0_g(\Ric(g))\int_M\left[\arrowvert\nabla h\arrowvert^2+\arrowvert \nabla F_{\alpha}\arrowvert^2\arrowvert h\arrowvert^2\right]e^{2F_{\alpha-1}}d\mu_f\\
&&-\A^0_g(\Ric(g))\frac{1}{2}\int_M\arrowvert h\arrowvert^2\Delta_{f-\ln v} \left(e^{2F_{\alpha}}\right)v^{-1}d\mu_f\\
&=&\A^0_g(\Ric(g))\int_M\left[\arrowvert\nabla h\arrowvert^2-(\arrowvert \nabla F_{\alpha}\arrowvert^2+\Delta_{f-\ln v}F_{\alpha})\arrowvert h\arrowvert^2\right]e^{2F_{\alpha-1}}d\mu_f.
\end{eqnarray*}

Hence, by proposition \ref{prop-cov-der-est-wei-nor}, we get
\begin{eqnarray}
\int_M\arrowvert\Ric(g)\arrowvert\arrowvert\nabla T_{\alpha}\arrowvert^{2}d\mu_f&\lesssim&J^0_{\alpha-1}(h)+\A^0_g(\Ric(g))\left( I^0_{\alpha}(h)+\alpha^2 I^0_{\alpha-2}(h)\right)+\alpha I^0_{\alpha-1}(h)\\
&\lesssim&J^0_{\alpha-1}(h)+\epsilon\left( I^0_{\alpha}(h)+\alpha^2 I^0_{\alpha-2}(h)\right)+\alpha I^0_{\alpha-1}(h),\label{ineq-ric-curv-term}
\end{eqnarray}
where $\epsilon\geq \A^0_g(\Ric(g))$.

Assume $\HypII$ holds.
\begin{itemize}
\item If (\ref{con-1-hyp-I}) is satisfied then in particular, as $(X,g_X)$ is Einstein, according to \cite{Der-Asy-Com-Egs}, $T(g):=\Ric(g)-\frac{\R_g}{n-1}(g-\mathbf{n}\otimes\mathbf{n})=\textit{O}(v^{-2})$. Now,  as $\R_g\geq 0$ outside a compact set,
\begin{eqnarray*}
\int_M\Ric(g)(\nabla T_{\alpha},\nabla T_{\alpha})d\mu_f&\geq& \int_MT(g)(\nabla T_{\alpha},\nabla T_{\alpha})d\mu_f\\
&\gtrsim& -\int_Mv^{-2}\arrowvert\nabla T_{\alpha}\arrowvert^2d\mu_f.
\end{eqnarray*}
The same argument works in case (\ref{con-1-hyp-III}) thanks to \cite{Der-Asy-Com-Egs}. In any case, one has a similar estimate similar to (\ref{ineq-ric-curv-term}) with $\epsilon$ arbitrarily small.
\item Assume (\ref{con-1-hyp-II}) holds then, obviously,
\begin{eqnarray*}
\int_M\Ric(g)(\nabla T_{\alpha},\nabla T_{\alpha})d\mu_f&\geq&0,
\end{eqnarray*}
if $T_{\alpha}$ is supported outside a sufficiently large compact set of $M$.
  
\end{itemize}

Similarly, using the behavior of the curvature tensor at infinity,
\begin{eqnarray*}
\int_M\arrowvert\nabla\Rm(g)\arrowvert\arrowvert\nabla T_{\alpha}\arrowvert\arrowvert T_{\alpha}\arrowvert  d\mu_f&\lesssim& \int_M\arrowvert\nabla T_{\alpha}\arrowvert^2e^{2F_{\alpha-2}}d\mu_f+I_{\alpha-1}^0(h)\\
&\lesssim& \epsilon\int_M\arrowvert\nabla T_{\alpha}\arrowvert^2e^{2F_{\alpha-1}}d\mu_f+I_{\alpha-1}^0(h)\\
&\lesssim&J^0_{\alpha-1}(h)+\epsilon I^0_{\alpha}+\alpha I^0_{\alpha-1}(h)+\epsilon\alpha^2 I^0_{\alpha-2}(h),\\
\int_M\arrowvert\nabla\Rm(g)\arrowvert\arrowvert\nabla f\arrowvert\arrowvert T_{\alpha}\arrowvert^{2}d\mu_f&\lesssim&I_{\alpha-1}^0(h),
\end{eqnarray*}
where, again, $\epsilon$ can be chosen arbitrarily small.

Therefore, there exists a compact set $K\subset M$ such that for any positive $\alpha$ (if $\HypII$ holds) or for any positive $\alpha$ large enough compared to $\epsilon\geq\A^0_g(\Ric(g))$ (if $\HypI$ holds) and for any tensor compactly supported outside $K$,

\begin{eqnarray*}
\alpha I^0_{\alpha}+\alpha^2I^0_{\alpha-2}(h)\lesssim \left(1+\alpha^{-1}\right) \|[(H-\lambda)h]e^{F_{\alpha}}\|^2_{L^2_f}.
\end{eqnarray*}

\end{proof}

We end this section by estimating the semi norms $\left(I^k_{\alpha}(h)\right)_{\alpha}^k$ of a tensor $h$, compactly supported at infinity, by the semi norms $\left(I^k_{\alpha}(\nabla_{\nabla f}h)\right)_{\alpha}^k$.

\begin{prop}\label{prop-control-rad-der}
For any nonnegative integer $k$, and  any smooth tensor $h$ such that $\supp(h)\subset M\setminus K$, where $K$ might depend on $k$,
\begin{eqnarray}
&&\sum_{i=0}^kI^i_{\alpha+1}(h)\lesssim \sum_{i=0}^kI^{i}_{\alpha-1}(\nabla_{\nabla f}h),\quad\forall \alpha\in \mathbb{R}_+,\label{est-sec-fun-form-I}\\
&&\sum_{i=0}^k\alpha^2I^i_{\alpha-1}(h)+\alpha I^i_{\alpha}(h)+I^i_{\alpha+1}(h)\lesssim \sum_{i=0}^kI^{i}_{\alpha-1}(\nabla_{\nabla f}h), \label{est-sec-fun-form-II}
\end{eqnarray}
for $\alpha$ large enough, independent of $h$.
\end{prop}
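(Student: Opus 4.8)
The plan is to prove Proposition \ref{prop-control-rad-der} by an integration-by-parts argument that trades the radial derivative $\nabla_{\nabla f}h$ for the undifferentiated tensor $h$, exploiting the exponential growth of the weight $e^{2F_\alpha}$ in the radial direction. First I would treat the base case $k=0$ and establish the key inequality \eqref{est-sec-fun-form-II} for $h$ itself. The essential identity is that $\nabla_{\nabla f}h$ appears naturally when one computes $\div(\arrowvert h\arrowvert^2\nabla f\, e^{2F_\alpha})$ with respect to $d\mu_f$: expanding this divergence yields $2\langle\nabla_{\nabla f}h,h\rangle e^{2F_\alpha}$ together with a zeroth-order term carrying the factor $\Delta_f f+2\langle\nabla f,\nabla F_\alpha\rangle$. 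Using the soliton identities from lemma \ref{id-EGS} one has $\arrowvert\nabla f\arrowvert^2=v+\textit{O}(1)$ and $\nabla F_\alpha=w_\alpha\nabla f$ with $w_\alpha>1/4$, so that $2\langle\nabla f,\nabla F_\alpha\rangle=2w_\alpha\arrowvert\nabla f\arrowvert^2\gtrsim v/2$ is strictly positive and grows linearly in $v$ outside a compact set. Integrating the divergence against the compactly supported $h$ makes the boundary term vanish, leaving a coercive zeroth-order term $\gtrsim\int_M v\arrowvert h\arrowvert^2 e^{2F_\alpha}d\mu_f$ on one side, controlled by $\int_M\arrowvert\nabla_{\nabla f}h\arrowvert\arrowvert h\arrowvert e^{2F_\alpha}d\mu_f$ on the other.

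The second step is to turn this into the weighted semi-norm statement by tracking the powers of $v$ encoded in the subscripts of $I^0_\alpha$. Recall $I^0_\beta(h)=\int_M\arrowvert h\arrowvert^2 e^{2F_\beta}d\mu_f$ and that $e^{2F_\beta}=v^{\beta-2\lambda+n/2}e^v$, so multiplying the weight by $v$ shifts $\alpha\mapsto\alpha+1$. After applying Cauchy--Schwarz to $\int_M\arrowvert\nabla_{\nabla f}h\arrowvert\arrowvert h\arrowvert e^{2F_\alpha}d\mu_f$ and absorbing the resulting $I^0_{\alpha+1}(h)$ term into the coercive left-hand side (valid once $\alpha$ is large enough that the linear-in-$v$ gain dominates), I would arrive at $I^0_{\alpha+1}(h)\lesssim I^0_{\alpha-1}(\nabla_{\nabla f}h)$, which is \eqref{est-sec-fun-form-I} for $k=0$. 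The stronger estimate \eqref{est-sec-fun-form-II} then follows because the interpolation inequalities $\alpha^2 I^0_{\alpha-1}(h)\lesssim I^0_{\alpha+1}(h)+\alpha^2(\text{lower})$ and $\alpha I^0_\alpha(h)\lesssim I^0_{\alpha+1}(h)$ hold trivially once $v\gtrsim\alpha$ on the support, and on the complementary region $v\lesssim\alpha$ one absorbs the extra factors of $\alpha$ against the genuine radial-derivative gain; concretely, the linear term $\int v\arrowvert h\arrowvert^2 e^{2F_\alpha}$ already dominates $\alpha I^0_\alpha(h)$ and, combined with the $\arrowvert\nabla F_\alpha\arrowvert^2\sim\alpha^2/v^2$ contribution that appears when one differentiates the weight, controls $\alpha^2 I^0_{\alpha-1}(h)$ as well.

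The third step is the induction on $k$. For the higher-order semi-norms I would apply the $k=0$ estimate not to $h$ but to $v^{i/2}\nabla^i h$ for each $0\le i\le k$, using the definition $I^i_\beta(h)=I^0_\beta(v^{i/2}\nabla^i h)$. The only new ingredient is commuting $\nabla_{\nabla f}$ past covariant derivatives: the commutator $[\nabla_{\nabla f},\nabla^i]$ produces terms of the schematic form $\nabla^2 f\ast\nabla^i h$ together with curvature contractions $\Rm(g)\ast\nabla^{i-1}h$ and lower, and by the asymptotically conical hypothesis these carry the favorable decay $\nabla^2 f=g/2+\Ric(g)$ with $\Ric(g)=\textit{O}(v^{-1})$ and $\nabla^j\Rm(g)=\textit{O}(v^{-1-j/2})$. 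Consequently $\nabla_{\nabla f}(v^{i/2}\nabla^i h)$ differs from $v^{i/2}\nabla^i(\nabla_{\nabla f}h)$ by terms already bounded by lower-order semi-norms $\sum_{j<i}I^j_{\alpha-1}(\nabla_{\nabla f}h)$ plus benignly weighted copies of $I^i_{\alpha-1}(h)$ itself, which the inductive hypothesis and the base-case coercivity absorb. I expect the main obstacle to be precisely the bookkeeping of these commutator error terms: one must verify that every term generated by $[\nabla_{\nabla f},\nabla^i]$ and by the differentiation of $F_\alpha$ either falls to a strictly lower order in $k$ (so the induction closes) or comes with an extra power of $v^{-1}$ that is reabsorbed by enlarging the excluded compact set $K$ and taking $\alpha$ sufficiently large, rather than contaminating the leading semi-norm $I^k_{\alpha+1}(h)$ one is trying to bound.
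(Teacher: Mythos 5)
Your overall architecture is the same as the paper's: the weighted integration by parts
$-\int_M\langle\nabla_{\nabla f}h,h\rangle e^{2F_{\alpha}}d\mu_f=\int_M\tfrac{1}{2}\arrowvert h\arrowvert^2\div_f(e^{2F_{\alpha}}\nabla f)\,d\mu_f$
for $k=0$, then induction on $k$ via the commutator $[\nabla_{\nabla f},\nabla^k]$, with the $\textit{O}(v^{-1})$-weighted errors absorbed by enlarging $K$; and your derivation of (\ref{est-sec-fun-form-I}) is correct. The gap is in the $\alpha$-weighted estimate (\ref{est-sec-fun-form-II}). First, by keeping only $w_{\alpha}>1/4$ you reduce the coercive factor to $\gtrsim v$ and thereby discard the $\alpha$-dependence; the full factor is $\Delta_ff+2\nabla_{\nabla f}F_{\alpha}=v+\bigl(1+\tfrac{\alpha+n/2-2\lambda}{v}\bigr)\arrowvert\nabla f\arrowvert^2=2v+\alpha+\textit{O}(1)+\alpha\,\textit{O}(v^{-1})$, which is \emph{linear} in $\alpha$ and is what produces the term $\alpha I^0_{\alpha}(h)$ — that part is recoverable. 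What your argument cannot produce is the term $\alpha^2I^0_{\alpha-1}(h)$. Your first mechanism (interpolation where $v\gtrsim\alpha$, absorption elsewhere) fails because $\supp(h)$ lies outside a \emph{fixed} compact set $K$ depending only on $k$, while (\ref{est-sec-fun-form-II}) must hold as $\alpha\rightarrow+\infty$; on $\{v\lesssim\alpha\}\cap\supp(h)$, which is nonempty and large for large $\alpha$, one has $\alpha^2/v\not\lesssim v+\alpha$, so the claimed pointwise domination is false (in particular $I^0_{\alpha+1}(h)$ does \emph{not} dominate $\alpha I^0_{\alpha}(h)$ there). Your second mechanism invokes a term that does not exist in this computation: the weight $e^{2F_{\alpha}}$ is differentiated exactly once, along $\nabla f$, yielding $2\nabla_{\nabla f}F_{\alpha}=2w_{\alpha}\arrowvert\nabla f\arrowvert^2$, again linear in $\alpha$. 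The quadratic quantity $\arrowvert\nabla F_{\alpha}\arrowvert^2$ (which indeed carries $\alpha^2/v$) arises only when one conjugates the second-order operator $H$ by $e^{F_{\alpha}}$, as in the Carleman lemma \ref{lemm-cruc-C^0}; no first-order integration by parts of this type can generate it.

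The missing step — and it is exactly how the paper concludes — is a \emph{second} application of Cauchy--Schwarz with equal weights. Besides the split bound $\int_M\arrowvert\nabla_{\nabla f}h\arrowvert\arrowvert h\arrowvert e^{2F_{\alpha}}d\mu_f\leq\tfrac{1}{2}\bigl(I^0_{\alpha+1}(h)+I^0_{\alpha-1}(\nabla_{\nabla f}h)\bigr)$, which gives (\ref{est-sec-fun-form-I}) and, together with the corrected coercivity, $\alpha I^0_{\alpha}(h)\lesssim I^0_{\alpha-1}(\nabla_{\nabla f}h)$, one also bounds the same integral by $\bigl(I^0_{\alpha}(h)\bigr)^{1/2}\bigl(I^0_{\alpha}(\nabla_{\nabla f}h)\bigr)^{1/2}$. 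Combining this with the coercive lower bound $\geq\tfrac{\alpha}{2}I^0_{\alpha}(h)$, dividing by $\bigl(I^0_{\alpha}(h)\bigr)^{1/2}$ and squaring yields $\alpha^2I^0_{\alpha}(h)\lesssim I^0_{\alpha}(\nabla_{\nabla f}h)$; applying this at level $\alpha-1$ gives precisely the missing term $\alpha^2I^0_{\alpha-1}(h)\lesssim I^0_{\alpha-1}(\nabla_{\nabla f}h)$ for $\alpha$ large. With this replacement your base case is complete, and your inductive step then goes through as you outline: it coincides with the paper's identity $\nabla_{\nabla f}(v^{k/2}\nabla^kh)=\nabla^k(v^{k/2}\nabla_{\nabla f}h)+\sum_{i=0}^{k}\textit{O}(v^{-1})(v^{i/2}\nabla^ih)$, whose error terms are absorbed by the sums over $i\leq k$ on both sides after enlarging $K$.
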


\begin{proof}
By integration by parts,
\begin{eqnarray*}
-\int_M<\nabla_{\nabla f}h,h>e^{2F_{\alpha}}d\mu_f&=&-\int_M\left<\nabla\frac{\arrowvert h\arrowvert^2}{2},\nabla f\right>e^{2F_{\alpha}}d\mu_f\\
&=&\int_M\frac{\arrowvert h\arrowvert^2}{2}\div_f(e^{2F_{\alpha}}\nabla f)d\mu_f=\int_M\frac{\arrowvert h\arrowvert^2}{2}\left(v+2\nabla_{\nabla f} F_{\alpha}\right)e^{2F_{\alpha}}d\mu_f\\
&=&\int_M\frac{\arrowvert h\arrowvert^2}{2}\left(v+\left(1+\frac{\alpha+n/2-2\lambda}{v}\right)\arrowvert\nabla f\arrowvert^2\right)e^{2F_{\alpha}}d\mu_f\\
&=&\int_M\left(v+\alpha(1+\textit{O}(v^{-1})\right)\arrowvert h\arrowvert^2e^{2F_{\alpha}}d\mu_f\geq I^0_{\alpha+1}(h)+\frac{\alpha}{2} I^0_{\alpha}(h).
\end{eqnarray*}
On the other hand,
\begin{eqnarray*}
-\int_M<\nabla_{\nabla f}h,h>e^{2F_{\alpha}}d\mu_f\leq \frac{1}{2}\left(I^0_{\alpha+1}(h)+I^0_{\alpha-1}(\nabla_{\nabla f}h)\right),\\
-\int_M<\nabla_{\nabla f}h,h>e^{2F_{\alpha}}d\mu_f\leq (I^0_{\alpha}(h)I^0_{\alpha}(\nabla_{\nabla f}h))^{1/2}.
\end{eqnarray*}
Hence the result for $k=0$. Then we proceed by induction by using commutation identities that hold on expanding gradient Ricci solitons : 
\begin{eqnarray*}
[\nabla_{\nabla f},\nabla^k]h=-\left(\frac{k}{2}+\Ric(g)\ast\right)\nabla^kh+\sum_{i=1}^k\nabla^{k-i}h\ast\nabla^i\Ric(g).
\end{eqnarray*}
In particular, if the geometry at infinity is conical,
\begin{eqnarray*}
\nabla_{\nabla f}(v^{k/2}\nabla^kh)=\nabla^k(v^{k/2}\nabla_{\nabla f}h)+\sum_{i=0}^{k}\textit{O}(v^{-1})(v^{i/2}\nabla^ih).
\end{eqnarray*}

\end{proof}
\section{Proof of theorem \ref{theo-sharp-dec-eig-tens-I}}\label{sec-theo-sharp-dec-eig-tens-I}
\subsection{Final step in the proof of theorem \ref{theo-sharp-dec-eig-tens-I}}

We first need a general lemma ensuring that both the gradient and the weighted laplacian of some tensor satisfying the elliptic equation in theorem \ref{theo-sharp-dec-eig-tens-I} lie in some exponentially weighted space as soon as this tensor does. More precisely,
\begin{lemma}\label{lemma-a-prio-dec-der-tens}
Let $(M^n,g,\nabla^gf)$ be an expanding gradient Ricci soliton asymptotically conical. Assume $h$ is a tensor satisfying 
\begin{eqnarray*}
&&-\Delta_fh-\lambda h=V_1\ast h+V_2\ast\nabla h,\quad\lambda\in\mathbb{R},\\
&&he^{F_{\alpha}}\in L^2(d\mu_f), \quad\mbox{for some real number $\alpha$},
\end{eqnarray*}
where $V_1$ and $V_2$ are as in theorem \ref{theo-sharp-dec-eig-tens-I}.

Then, $(\nabla h)e^{F_{\alpha-1}}\in L^2(d\mu_f) $ and $(\Delta_fh)e^{F_{\alpha}}\in L^2(d\mu_f)$. 
\end{lemma}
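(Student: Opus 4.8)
The plan is to prove the gradient bound first, by a weighted integration-by-parts argument tested against $e^{2F_{\alpha-1}}$, and then to read off the weighted Laplacian bound directly from the elliptic equation. Since $h$ is not assumed compactly supported, I would carry everything out against the family of cutoffs $\psi_t:=\eta(v/t)$ used in the proof of theorem \ref{a-prio-point-bd-orns-op}, and conclude by monotone convergence once a bound uniform in $t$ is in hand.

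The starting point is the identity obtained by integrating by parts twice against $d\mu_f$ (valid since $\psi_t^2 e^{2F_{\alpha-1}}h$ has compact support):
\[
\int_M \psi_t^2|\nabla h|^2 e^{2F_{\alpha-1}}\,d\mu_f
=-\int_M \langle \Delta_f h, h\rangle\,\psi_t^2 e^{2F_{\alpha-1}}\,d\mu_f
+\frac12\int_M |h|^2\,\Delta_f\!\bigl(\psi_t^2 e^{2F_{\alpha-1}}\bigr)\,d\mu_f.
\]
Into the first term I substitute $\Delta_f h=-\lambda h-V_1\ast h-V_2\ast\nabla h$, turning it into $\int_M(\lambda|h|^2+V_1\ast h\cdot h+V_2\ast\nabla h\cdot h)\psi_t^2 e^{2F_{\alpha-1}}\,d\mu_f$. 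The only dangerous contribution is $V_2\ast\nabla h\cdot h$, which I would split by Young's inequality with a fixed parameter $\delta<1$ into a piece $\tfrac{\delta}{2}\int\psi_t^2|\nabla h|^2 e^{2F_{\alpha-1}}$ absorbed on the left and a remainder $\lesssim\delta^{-1}\int|V_2|^2|h|^2 e^{2F_{\alpha-1}}$; using $|V_2|\lesssim v^{-3/2}$ and $e^{2F_{\alpha-1}}=v^{-1}e^{2F_{\alpha}}$ the remainder is $\lesssim I^0_{\alpha}(h)<+\infty$. The terms $\lambda|h|^2$ and $V_1\ast h\cdot h$, with $|V_1|\lesssim v^{-1}$, are likewise controlled by $I^0_{\alpha}(h)$.

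For the second term I would compute, via the soliton identities of lemma \ref{id-EGS} exactly as in the proof of lemma \ref{lemm-cruc-C^0}, that to leading order $\Delta_f(e^{2F_{\alpha-1}})=\bigl(2v+\textit{O}(\alpha)\bigr)e^{2F_{\alpha-1}}$; since $v\,e^{2F_{\alpha-1}}=e^{2F_{\alpha}}$, the main part of this term is a finite multiple of $I^0_{\alpha}(h)$, which is finite by hypothesis. All remaining pieces of $\Delta_f(\psi_t^2 e^{2F_{\alpha-1}})$ carry a factor $\nabla\psi_t$ or $\Delta_f\psi_t$ and are supported in the shell $\{t\le v\le 2t\}$. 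The subtle point, and the main obstacle, is that the worst weight there, $|\nabla\psi_t|\,|\nabla F_{\alpha-1}|$, is only $\textit{O}(1)$ and does not decay; nevertheless each such error is dominated by $\int_{\{t\le v\le 2t\}}|h|^2 e^{2F_{\alpha-1}}\,d\mu_f$, a tail of the convergent integral $I^0_{\alpha}(h)\ge\int_M|h|^2 e^{2F_{\alpha-1}}\,d\mu_f$, and hence tends to $0$ as $t\to+\infty$. Collecting these estimates yields $\int_M \psi_t^2|\nabla h|^2 e^{2F_{\alpha-1}}\,d\mu_f\lesssim I^0_{\alpha}(h)+\textit{o}(1)$ uniformly in $t$, and monotone convergence gives $(\nabla h)e^{F_{\alpha-1}}\in L^2(d\mu_f)$.

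Finally the weighted Laplacian bound is immediate from the equation: pointwise $|\Delta_f h|\lesssim|h|+v^{-1}|h|+v^{-3/2}|\nabla h|$, so
\[
\int_M|\Delta_f h|^2 e^{2F_{\alpha}}\,d\mu_f
\lesssim I^0_{\alpha}(h)+\int_M v^{-3}|\nabla h|^2 e^{2F_{\alpha}}\,d\mu_f
= I^0_{\alpha}(h)+\int_M v^{-2}|\nabla h|^2 e^{2F_{\alpha-1}}\,d\mu_f,
\]
which is finite by the gradient bound just proved. Here the decay $|V_2|\lesssim v^{-3/2}$ is exactly what is needed to convert the $e^{2F_{\alpha}}$ weight on $\nabla h$ back into the integrable $e^{2F_{\alpha-1}}$ weight, so $(\Delta_f h)e^{F_{\alpha}}\in L^2(d\mu_f)$. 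I expect no curvature sign hypothesis (i.e. neither $\HypI$ nor $\HypII$) to be needed, only the asymptotically conical structure entering through lemma \ref{id-EGS}; the whole difficulty is the non-compactness of $\supp h$, handled by the shell estimate above.
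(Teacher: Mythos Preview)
Your proposal is correct and follows essentially the same route as the paper's proof: integrate $\langle -\Delta_f h,h\rangle$ against a cutoff times the exponential weight, use the equation to bound the right-hand side, absorb the $V_2\ast\nabla h$ contribution via Young's inequality, and read off the Laplacian bound directly from the equation. The only cosmetic differences are that the paper works with the weight $e^{2F_{\alpha}}$ and concludes ``$(\nabla h)e^{F_{\alpha}}\in L^2$ if $he^{F_{\alpha+1}}\in L^2$'' (your statement after an index shift), and that you are more explicit about the passage to the limit in the cutoff---the paper simply writes ``let $\phi$ be a standard smooth cut-off function'' and omits the shell/monotone-convergence discussion you supply.
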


\begin{proof}

Let $\phi$ be a standard smooth cut-off function. Then, on one hand,
\begin{eqnarray*}
\int_M<-\Delta_fh,\phi^2h>e^{2F_{\alpha}}d\mu_f&=&\int_M\phi^2\arrowvert\nabla h\arrowvert^2e^{2F_{\alpha}}d\mu_f+2\int_M<\nabla_{\nabla \phi}h,\phi h>e^{2F_{\alpha}}d\mu_f\\
&&+\frac{1}{2}\int_M<\nabla \arrowvert h\arrowvert^2,\phi^2\nabla e^{2F_{\alpha}}>d\mu_f\\
&\gtrsim&\frac{1}{2}\int_M\phi^2\arrowvert\nabla h\arrowvert^2e^{2F_{\alpha}}d\mu_f\\
&&-\int_M\left(\arrowvert\nabla \phi\arrowvert^2+e^{-2F_{\alpha}}\div_f\left(\phi^2 \nabla e^{2F_{\alpha}}\right)\right)\arrowvert h\arrowvert^2e^{2F_{\alpha}}d\mu_f.
\end{eqnarray*}
On the other hand, 
\begin{eqnarray*}
\int_M<-\Delta_fh,\phi^2h>e^{2F_{\alpha}}d\mu_f\lesssim \int_M\phi^2\arrowvert h\arrowvert^2e^{2F_{\alpha}}d\mu_f+\int_M\phi^2\arrowvert\nabla h\arrowvert^2e^{2F_{\alpha-1}}d\mu_f.
\end{eqnarray*}
These two estimates show then that $(\nabla h) e^{F_{\alpha}}\in L^2(d\mu_f)$ if $he^{F_{\alpha+1}}\in L^2(d\mu_f)$.
As $h$ satisfies an elliptic equation of the form given as above, we conclude by inspecting each term that $(\Delta_fh)e^{F_{\alpha}}\in L^2(d\mu_f)$ if  $he^{F_{\alpha}}\in L^2(d\mu_f)$.

\end{proof}

\begin{rk}
Lemma \ref{lemma-a-prio-dec-der-tens} is far from being optimal : one only needs $V_1=\textit{O}(1)$ and $V_2=\textit{O}(v^{-1/2})$.
\end{rk}
We are in a position to prove theorem \ref{theo-sharp-dec-eig-tens-I} in the case one knows a priori that a solution to equation (\ref{ell-equ-sharp-dec-eig-tens-I}) lies in $L^2(e^{2F_{\alpha}}d\mu_f)$ for any positive $\alpha$.
\begin{theo}\label{theo-fin-step}
Let $(M^n,g,\nabla^gf)$ be an expanding gradient Ricci soliton asymptotically conical satisfying $\HypI$ or $\HypII$. Assume $h$ is a tensor satisfying 
\begin{eqnarray*}
&&-\Delta_fh+\lambda h=V_1\ast h+V_2\ast\nabla h,\quad V_1=\textit{O}(1),\quad V_2=\textit{o}(v^{-1/2}),\quad \lambda\in\mathbb{R},\\
&&he^{F_{\alpha}}\in L^2(d\mu_f), \quad\forall \alpha>0.
\end{eqnarray*}
Then $h\equiv 0$.

\end{theo}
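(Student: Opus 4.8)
The plan is to run a Carleman-type argument built on Lemma \ref{lemm-cruc-C^0}, exploiting the super-exponential decay hypothesis $he^{F_\alpha}\in L^2(d\mu_f)$ for \emph{all} $\alpha>0$ by letting $\alpha\to+\infty$. First I would reduce the statement: it suffices to prove $h\equiv 0$ on an end $\{v\geq 3a\}$ for some large $a$, since $h$ then vanishes on a nonempty open set and solves a second-order elliptic system whose principal part is the connection Laplacian (scalar symbol $|\xi|^2\Id$) with bounded lower-order coefficients, so the Aronszajn strong unique continuation property propagates the vanishing to the whole connected manifold $M$. Two preliminaries are needed before applying the Carleman estimate. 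On one hand, Lemma \ref{lemma-a-prio-dec-der-tens} (whose relaxed hypotheses $V_1=\textit{O}(1)$, $V_2=\textit{o}(v^{-1/2})$ are met, cf.\ the remark following it) propagates the decay to the derivatives, so that $I^0_\alpha(h)$, $\int_M|\nabla h|^2e^{2F_\alpha}d\mu_f$ and $J^0_\alpha(h)$ are finite for every $\alpha$. On the other hand, writing $H=-\Delta_f$, I would rewrite the equation as $(H-\lambda)h=\widetilde V_1\ast h+V_2\ast\nabla h$ with $\widetilde V_1:=V_1-2\lambda=\textit{O}(1)$, so that it matches the operator of Lemma \ref{lemm-cruc-C^0} irrespective of the sign of $\lambda$.

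Next I would introduce two cutoffs: an inner one $\chi$, equal to $0$ on $\{v\leq a\}$ and to $1$ on $\{v\geq 2a\}$ with $\{v\geq a\}\subset M\setminus K$, and an outer one $\chi_R=\eta(v/R)$ truncating at radius $R$. Applying Lemma \ref{lemm-cruc-C^0} to the compactly supported tensor $\chi\chi_R h$ yields
\[
\alpha I^0_\alpha(\chi\chi_R h)+\alpha^2 I^0_{\alpha-2}(\chi\chi_R h)\lesssim(1+\alpha^{-1})\big\|[(H-\lambda)(\chi\chi_R h)]e^{F_\alpha}\big\|^2_{L^2_f}.
\]
I would then expand $(H-\lambda)(\chi\chi_R h)=\chi\chi_R(\widetilde V_1\ast h+V_2\ast\nabla h)+E$, where $E$ collects the commutator terms carrying $\nabla\chi,\Delta_f\chi$ (supported in the inner annulus $\{a\leq v\leq 2a\}$) and $\nabla\chi_R,\Delta_f\chi_R$ (supported in $\{R\leq v\leq 2R\}$). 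The main terms are absorbed into the left-hand side: since $\widetilde V_1=\textit{O}(1)$ one has $\|\widetilde V_1\chi\chi_R h\,e^{F_\alpha}\|^2\lesssim I^0_\alpha(\chi\chi_R h)$, controlled by $\tfrac12\alpha I^0_\alpha$ once $\alpha$ exceeds a fixed constant; and since $V_2=\textit{o}(v^{-1/2})$ and the support lies in $\{v\geq a\}$, one gets $\|V_2\chi\chi_R\nabla h\,e^{F_\alpha}\|^2\leq\epsilon(a)^2\int|\nabla(\chi\chi_R h)|^2e^{2F_{\alpha-1}}$ with $\epsilon(a)\to0$, and Proposition \ref{prop-cov-der-est-wei-nor} (together with the equation, which turns the arising $J^0$ terms back into $I^0$ terms) bounds this by $\epsilon(a)^2$ times a combination of $\alpha I^0_\alpha$, $\alpha^2 I^0_{\alpha-2}$ and lower-weight semi-norms. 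Taking $a$ large makes $\epsilon(a)$ small enough to absorb all of these on the left.

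It then remains to pass to the limit. Fixing $\alpha$ and letting $R\to+\infty$, the outer part of $\|Ee^{F_\alpha}\|^2$, namely $\int_{R\leq v\leq 2R}(|\nabla\chi_R|^2+|\Delta_f\chi_R|^2)(|h|^2+|\nabla h|^2)e^{2F_\alpha}d\mu_f$, is a tail of the finite integrals $\int(|h|^2+|\nabla h|^2)e^{2F_{\alpha+1}}d\mu_f$ (using $|\nabla v|^2\sim v$, so $|\nabla\chi_R|^2\lesssim v/R^2$), hence tends to $0$. With only $\chi$ surviving this leaves
\[
\alpha\int_{\{v\geq 3a\}}|h|^2e^{2F_\alpha}d\mu_f\lesssim \|E_{\mathrm{in}}e^{F_\alpha}\|^2\lesssim C(a)\,e^{2F_\alpha(2a)},
\]
where $C(a)$ bounds $|h|,|\nabla h|$ on the fixed annulus $\{a\leq v\leq 2a\}$ and is independent of $\alpha$, because $F_\alpha$ is increasing in $v$. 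Dividing by $e^{2F_\alpha(3a)}$ and using $e^{2F_\alpha(v)}=v^{\alpha+n/2-2\lambda}e^v$ gives
\[
\alpha\int_{\{v\geq 3a\}}|h|^2d\mu_f\lesssim C(a)\,\Big(\tfrac{2}{3}\Big)^{\alpha+n/2-2\lambda}e^{-a}\xrightarrow[\alpha\to+\infty]{}0,
\]
which forces $h\equiv 0$ on $\{v\geq 3a\}$ and completes the reduction.

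The main obstacle I anticipate is the bookkeeping in the absorption step: because the Carleman inequality couples the weights $\alpha,\alpha\pm1,\alpha\pm2$ and Proposition \ref{prop-cov-der-est-wei-nor} reintroduces $J^0$ (hence, through the equation, first derivatives at a shifted weight), one must check that the smallness $\epsilon(a)$ coming from $V_2=\textit{o}(v^{-1/2})$ genuinely dominates \emph{every} power of $\alpha$ produced, so that the gradient terms are absorbed uniformly in $\alpha$ \emph{before} the limit $\alpha\to+\infty$. The borderline hypothesis $V_2=\textit{o}(v^{-1/2})$, rather than $\textit{O}(v^{-1/2})$, is exactly what guarantees this uniformity.
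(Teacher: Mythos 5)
Your proof is correct and follows essentially the same route as the paper: apply the Carleman inequality of lemma \ref{lemm-cruc-C^0} to a cutoff of $h$, absorb the zeroth-order term by taking $\alpha$ large and the gradient term via proposition \ref{prop-cov-der-est-wei-nor} combined with the smallness furnished by $V_2=\textit{o}(v^{-1/2})$, then let $\alpha\to+\infty$ using the monotonicity of $F_{\alpha}$ in $v$ to force $h\equiv 0$ outside a compact set. The only (harmless) deviations are that you make explicit the outer truncation $\chi_R$ and the limit $R\to+\infty$ which the paper leaves implicit behind lemma \ref{lemma-a-prio-dec-der-tens}, and that you conclude by Aronszajn-type unique continuation for systems with scalar principal symbol where the paper invokes analyticity of solutions of elliptic equations with analytic coefficients \cite{Ban-Ana}.
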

\begin{proof}
Let $K$ be a compact subset of $M$ such that lemma \ref{lemm-cruc-C^0} holds.

Let $\eta$ be a smooth nonnegative function on $M$ such that $\eta\equiv 0$ on $K$ and $\eta\equiv 1$ on $M\setminus K'$ with $K\subset K'$. Thanks to lemma \ref{lemma-a-prio-dec-der-tens}, one can apply lemma \ref{lemm-cruc-C^0} to $\tilde{h}:=\eta h$ for any positive $\alpha$ : 

\begin{eqnarray*}
\alpha I^0_{\alpha}(\tilde{h})+\alpha^2I^0_{\alpha-2}(\tilde{h})\lesssim \left(1+\frac{1}{\alpha}\right) \|[(H-\lambda)\tilde{h}]e^{F_{\alpha}}\|^2_{L^2_f}.
\end{eqnarray*}

Now,
\begin{eqnarray*}
H\tilde{h}-\lambda\tilde{h}&=&\left(hH\eta-2<\nabla h,\nabla\eta>\right)+V_0\ast(\eta h)+V_1\ast \eta\nabla h\\
&=&\eta(h)+V_0\ast(\eta h)+V_1\ast \nabla(\eta h),
\end{eqnarray*}
where $\eta(h)$ is a compactly supported tensor depending on $h$ and $\nabla h$. Therefore,
\begin{eqnarray}\label{est-J-0-alpha}
\|[(H-\lambda)\tilde{h}]e^{F_{\alpha}}\|_{L^2_f}\leq \|[V_0\ast\tilde{h}]e^{F_{\alpha}}\|_{L^2_f}+\|[V_1\ast\nabla\tilde{h}]e^{F_{\alpha}}\|_{L^2_f}+C(K',h)e^{\sup_{K'}F_{\alpha}+f/2}.
\end{eqnarray}
Since $V_0$ is bounded, the first term on the right hand side can be absorbed if $\alpha$ is large enough so that : 
\begin{eqnarray*}
\alpha I^0_{\alpha}(\tilde{h})+\alpha^2I^0_{\alpha-2}(\tilde{h})\lesssim \|[V_1\ast\nabla\tilde{h}]e^{F_{\alpha}}\|^2_{L^2_f}+C(K',h)e^{\sup_{K'}2F_{\alpha}+f}.
\end{eqnarray*}
Now, as $V_1=\textit{O}(v^{-1/2})$, we can bound the remaining term on the right hand side as follows : 
\begin{eqnarray*}
\|[V_1\ast\nabla\tilde{h}]e^{F_{\alpha}}\|^2_{L^2_f}\lesssim\int _M\arrowvert\nabla \tilde{h}\arrowvert^2e^{2F_{\alpha-1}}d\mu_f = I^1_{\alpha-2}(\tilde{h}).
\end{eqnarray*}

By proposition \ref{prop-cov-der-est-wei-nor},
\begin{eqnarray*}
I^1_{\alpha-2}(\tilde{h})\lesssim J^0_{\alpha-1}(\tilde{h})+I^0_{\alpha+1-1}(\tilde{h})+\alpha I^0_{\alpha-1}(\tilde{h})+\alpha^2 I^0_{\alpha-1-1}(\tilde{h}).
\end{eqnarray*}
Similarly to (\ref{est-J-0-alpha}), one has,
\begin{eqnarray*}
J^0_{\alpha-1}(\tilde{h})\lesssim I^0_{\alpha-1}(\tilde{h})+I^1_{\alpha-3}(\tilde{h})+C(K',h)e^{\sup_{K'}2F_{\alpha-1}+f},
\end{eqnarray*}
which implies,
\begin{eqnarray*}
I^1_{\alpha-2}(\tilde{h})\lesssim I^0_{\alpha+1-1}(\tilde{h})+\alpha I^0_{\alpha-1}(\tilde{h})+\alpha^2 I^0_{\alpha-1-1}(\tilde{h})+C(K',h)e^{\sup_{K'}2F_{\alpha-1}+f}.
\end{eqnarray*}
We can conclude by concatenating all the previous inequalities by using now that $V_1=\textit{o}(v^{-1/2})$ :
\begin{eqnarray*}
\alpha I^0_{\alpha}(\tilde{h}) + \alpha^2 I^0_{\alpha-2}(\tilde{h})\lesssim C(K',h)e^{\sup_{K'}2F_{\alpha}+f},
\end{eqnarray*}
which implies in particular that for any positive $\alpha$ : 
\begin{eqnarray*}
\alpha \int_{M\setminus K'}\arrowvert h\arrowvert^2d\mu_f\lesssim C(K',h)e^{\sup_{K'}(2F_{\alpha}+f)-\inf_{M\setminus K'}(2F_{\alpha}+f)}\lesssim C(K',h),
\end{eqnarray*}
if $K'=\{f\leq t_0\}$ for some $t_0$ large enough. Hence $h\equiv 0$ outside a compact set. As $h$ is analytic, since it is a solution of an elliptic equation with analytic coefficients \cite{Ban-Ana}, $h$ vanishes identically on $M$.

\end{proof}

\subsection{A priori faster than polynomial decay}\label{subsec-pol-dec-a-prio-I}

Define $$S:=\{\alpha\in\mathbb{R}\quad|\quad he^{F_{\alpha}}\in L^2(d\mu_f)\}.$$

\begin{theo}\label{theo-sharp-dec-eig-tens}
Let $(M^n,g,\nabla^gf)$ be an asymptotically conical expanding gradient Ricci soliton. Assume $h$ is a smooth tensor satisfying for some $\lambda\in\mathbb{R}$,
\begin{eqnarray*}
&&-\Delta_fh-\lambda h=V_0\ast h+V_1\ast\nabla h,\\
&& \nabla^kV_0=\textit{O}(v^{-1-k/2}), \quad k\in\{0,1,2\},\\
&& \nabla^kV_1=\textit{O}(v^{-3/2-k/2}), \quad k\in\{0,1,2\},\\
&&\lim_{+\infty}v^{-\lambda+\frac{n}{2}}e^vh=\lim_{+\infty}e^{F_0}h=\lim_{+\infty}h_{\lambda}=0.
\end{eqnarray*}
Then,
$$S=\mathbb{R}.$$
\end{theo}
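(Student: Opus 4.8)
The plan is to prove that $S=\mathbb{R}$ by a bootstrap on the decay exponent $\alpha$, driven by the Carleman inequality of lemma \ref{lemm-cruc-C^0}. I first record two structural facts. Since $e^{F_\alpha}=e^{F_\beta}v^{(\alpha-\beta)/2}$ and $v\geq 1$ at infinity, the condition $he^{F_\alpha}\in L^2(d\mu_f)$, i.e. $I^0_\alpha(h)<+\infty$, is \emph{monotone} in $\alpha$; hence $S$ is an interval of the form $(-\infty,\alpha_0)$ or $(-\infty,\alpha_0]$ and it suffices to prove $\sup S=+\infty$. For the base case I use the pointwise hypothesis $h_\lambda\to 0$: writing $h=v^{\lambda-n/2}e^{-v}h_\lambda$ and $d\mu_f=e^{f}d\mu_g$ with $f=v-\mu(g)$, one computes
\[
\int_M |h e^{F_\alpha}|^2\,d\mu_f = e^{-\mu(g)}\int_M |h_\lambda|^2\, v^{\alpha-\frac n2}\,d\mu_g .
\]
Because the geometry is conical, $\vol\{v\leq t\}\sim t^{n/2}$, so by the coarea formula the tail of this integral behaves like $\int^{\infty}|h_\lambda|^2\,t^{\alpha-1}\,dt$; as $h_\lambda$ is bounded this converges for every $\alpha<0$. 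Thus $S\supseteq(-\infty,0)$, and in particular $\sup S\geq 0$.

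The heart of the matter is an inductive step producing a definite gain in the exponent. Let $\eta$ be a cut-off vanishing on the compact set $K$ of lemma \ref{lemm-cruc-C^0} and equal to $1$ far out, and apply that lemma to $\eta h$ at weight $\alpha$. On the right-hand side I rewrite $(H-\lambda)(\eta h)=V_0\ast(\eta h)+V_1\ast\nabla(\eta h)+\eta(h)$, where $\eta(h)$ is compactly supported. The decisive point is the decay of the potentials: since $V_0=\textit{O}(v^{-1})$ and $V_1=\textit{O}(v^{-3/2})$ one has $\|[V_0\ast\eta h]e^{F_\alpha}\|^2_{L^2_f}\lesssim I^0_{\alpha-2}(\eta h)$ and $\|[V_1\ast\nabla(\eta h)]e^{F_\alpha}\|^2_{L^2_f}\lesssim I^1_{\alpha-4}(\eta h)$, so every term lands at a weight \emph{strictly below} $\alpha$. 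Feeding the gradient term through proposition \ref{prop-cov-der-est-wei-nor} trades $I^1_{\alpha-4}(\eta h)$ for $I^0$- and $J^0$-seminorms at weights $\leq\alpha-2$, and each $J^0$ is converted back, via the equation, into $I^0$ at comparably low weight. Lemma \ref{lemm-cruc-C^0} then yields $\alpha I^0_\alpha(\eta h)+\alpha^2 I^0_{\alpha-2}(\eta h)$ bounded by a finite sum of $I^0_\beta(\eta h)$ with $\beta\leq\alpha-1$ together with a compactly supported error $C(K',h)$, the a priori finiteness of the $\nabla(\eta h)$- and $\Delta_f(\eta h)$-seminorms needed to justify these manipulations being supplied by lemma \ref{lemma-a-prio-dec-der-tens}. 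Under the inductive hypothesis $(-\infty,\alpha-1]\subseteq S$ the right-hand side is finite, whence $I^0_\alpha(\eta h)<+\infty$, i.e. $\alpha\in S$. Starting from $S\supseteq(-\infty,-1]$ and iterating this unit gain gives $S\supseteq(-\infty,-1+k]$ for every $k\in\mathbb{N}$, that is $S=\mathbb{R}$.

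The step I expect to be the genuine obstacle is the justification of lemma \ref{lemm-cruc-C^0} for the \emph{non}-compactly supported tensor $\eta h$. I would insert a second cut-off $\psi_R=\eta(v/R)$, run the Carleman inequality on $\psi_R\eta h$, and let $R\to+\infty$. This creates annular error terms of the schematic shapes $\int_{A_R}|\nabla\psi_R|^2(|\eta h|^2+|\nabla(\eta h)|^2)e^{2F_\alpha}d\mu_f$ and $\int_{A_R}|\Delta_f\psi_R|^2|\eta h|^2 e^{2F_\alpha}d\mu_f$ on $A_R=\{R\leq v\leq 2R\}$, all carrying the full weight $\alpha$ that is not yet controlled. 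Here the soliton identities $|\nabla v|^2\sim v$ and $\Delta_f v\sim v$ give $|\nabla\psi_R|^2\lesssim v/R^2\sim R^{-1}$, so the gradient-type errors are dominated by the tails of the convergent seminorms $I^0_{\alpha-1}(\eta h)$ and $I^1_{\alpha-3}(\eta h)$ and vanish as $R\to+\infty$ — which is precisely why the induction must assume $\alpha-1\in S$ rather than merely $\alpha\in S$. The term generated by $\Delta_f\psi_R$, which is only $\textit{O}(1)$ on $A_R$, is the delicate one; the borderline solution $h_\lambda\to c\neq 0$ (for which $S=(-\infty,0)$) shows that without extra decay this term genuinely does not vanish, so it is exactly here that the strict hypothesis $h_\lambda\to 0$, reinforced by the a priori rescaled derivative bounds of theorem \ref{a-prio-point-bd-orns-op}, must be invoked. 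I would dispose of it either by an integration by parts transferring the derivative off the cut-off (as in the proof of theorem \ref{a-prio-point-bd-orns-op}) or by absorbing it into the left-hand side using the same annular weight gain. With $S=\mathbb{R}$ in hand, theorem \ref{theo-fin-step} then applies and yields $h\equiv 0$, completing the proof of theorem \ref{theo-sharp-dec-eig-tens-I}.
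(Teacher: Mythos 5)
Your reduction to showing $\sup S=+\infty$, your base case $S\supseteq(-\infty,0)$, and your accounting of the gradient-type annular errors are all correct, and you have located the crux accurately: lemma \ref{lemm-cruc-C^0} is proved only for compactly supported tensors, and an exhaustion by cutoffs $\psi_R$ produces the error $\int_{A_R}\arrowvert\Delta_f\psi_R\arrowvert^2\arrowvert\eta h\arrowvert^2e^{2F_{\alpha}}d\mu_f$ with $\Delta_f\psi_R=\textit{O}(1)$ on $A_R$ (since $\Delta_f v=v\sim R$ there), i.e.\ an error carrying the full, not yet controlled, weight $\alpha$. But neither of your proposed fixes closes this gap, and no fix using only your inductive hypothesis can. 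Absorption into the left-hand side fails for a support reason: $\Delta_f\psi_R$ is largest exactly where $\psi_R$ is small (the outer part of the annulus), so $\alpha I^0_{\alpha}(\psi_R\eta h)$ cannot dominate it; and integrating by parts moves derivatives around without removing the weight-$\alpha$ annular mass. Quantitatively the error is comparable to $R\int_{A_R}\arrowvert h\arrowvert^2e^{2F_{\alpha-1}}d\mu_f$, i.e.\ $R$ times the tail of a convergent integral, which tends to zero precisely when $I^0_{\alpha}(h)<+\infty$ --- the very thing you are trying to prove. The decisive counterexample is the borderline eigentensor with $h_{\lambda}\rightarrow c\neq 0$ (the generic $L^2(e^fd\mu_g)$ behaviour recalled in the introduction): it satisfies the equation, the decay of $V_0,V_1$, and boundedness of $h_{\lambda}$ --- which is all that your base case and your inductive step actually use --- and for it $S=(-\infty,0)$, so $(-\infty,-1]\subseteq S$ while $0\notin S$. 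Hence the unit-gain implication ``$(-\infty,\alpha-1]\subseteq S\Rightarrow\alpha\in S$'' is false as stated; the hypothesis $h_{\lambda}\rightarrow 0$ must enter quantitatively, and your proposal gestures at this but never implements it.

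The paper handles both points differently. First, the hypothesis $h_{\lambda}\rightarrow0$ is spent entirely in the base case: theorem \ref{a-prio-point-bd-orns-op} gives $\nabla^kh_{\lambda}=\textit{O}(v^{-k/2})$ for $k=1,2$, the rescaled equation (\ref{eq-back-wei-eigen-ten}) then controls the radial derivative, and integrating $\partial_t\arrowvert h_{\lambda}\arrowvert^2=\textit{O}(v^{-2})$ along the Morse flow of $\nabla f/\arrowvert\nabla f\arrowvert^2$ \emph{from infinity} (this is where $h_{\lambda}\rightarrow0$ is indispensable) yields $h_{\lambda}=\textit{O}(v^{-1})$, hence $(-\infty,1]\subseteq S$ --- strictly past the borderline value $0$, which your base case never crosses. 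Second, the gain in $\alpha$ is not obtained by exhaustion at a higher weight at all, but by Donnelly's bounded-weight approximation: for $\alpha\in S$ one runs the Carleman machinery with $F_{\alpha,\epsilon,s}=F_{\alpha}+\frac{\epsilon}{2}\chi_s(\ln v)$, $\chi_s(t)=\arctan(st)/s$. For fixed $s>0$ this weight is comparable to $F_{\alpha}$, so every integral is finite a priori and the compact-support issue evaporates; the price is an $\textit{O}(\epsilon)$ perturbation of the commutator terms (lemma \ref{lem-est-ad-hoc-app-id}), absorbed for $\epsilon$ small; the constants are uniform in $s$, and letting $s\rightarrow0$ gives $\alpha+\epsilon\in S$. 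Together with closedness (uniform constants plus Fatou along $\alpha_i\rightarrow\alpha_{\infty}>0$) and connectedness of $(0,+\infty)$, this gives $S=\mathbb{R}$. In short, the $\chi_s$ regularization is not a technicality that a cutoff argument can replace; it is the mechanism that makes any gain possible, and your scheme is missing it.
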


\begin{proof}[Proof of theorem \ref{theo-sharp-dec-eig-tens} ]

We proceed by showing that $S$ is a non empty open and closed set of $\mathbb{R}$.

\begin{claim}\label{S-non-emp}
$S$ is non empty.
\end{claim}
\begin{proof}[Proof of claim \ref{S-non-emp}]

We are in a position to apply theorem \ref{a-prio-point-bd-orns-op} to $h$. Therefore, the two first covariant derivatives of $h_{\lambda}$ decay like the following :
\begin{eqnarray*}
\nabla^kh_{\lambda}=\textit{O}(v^{-k/2}),\quad k=1,2.
\end{eqnarray*}
Now, from the proof of theorem \ref{a-prio-point-bd-orns-op}, $h_{\lambda}$ satisfies the following elliptic equation : 
\begin{eqnarray}\label{eq-back-wei-eigen-ten}
\Delta_{-f}h_{\lambda}&=&W_0\ast h_{\lambda}+W_1\ast\nabla h_{\lambda},
\end{eqnarray}
where $W_0$ behaves like $V_0$ at infinity and $W_1$ is such that
\begin{eqnarray*}
\mathscr{W}_{1,k}:=\sup_Mv^{k/2}\arrowvert\nabla^k (v^{1/2} W_1)\arrowvert<+\infty,\quad k=0,1,2.
\end{eqnarray*}

We conclude by integrating equation (\ref{eq-back-wei-eigen-ten}) along the Morse flow generated by $\nabla f/\arrowvert\nabla f\arrowvert^2$ : 
\begin{eqnarray*}
\partial_t\arrowvert h_{\lambda}\arrowvert^2=2<\nabla_{\frac{\nabla f}{\arrowvert\nabla f\arrowvert^2}}h_{\lambda},h_{\lambda}>=\textit{O}(v^{-2}),
\end{eqnarray*}
i.e. $h_{\lambda}=\textit{O}(v^{-1})$. In particular, $he^{F_{1}}\in L^{2}(d\mu_f)$, i.e. $1\in S$. We even get $(-\infty,1]\subset S$.

\end{proof}

\begin{claim}\label{S-clo}
$S$ is closed.
\end{claim}
\begin{proof}[Proof of claim \ref{S-clo}]
Indeed, let $(\alpha_i)_i$ be a sequence of positive numbers in $S$ converging to $\alpha_{\infty}\in\mathbb{R}_+^*$. Then, by using the proof of theorem \ref{theo-fin-step} and its notations, and by using the assumptions on $V_0$ and $V_1$ at infinity, one has, for every index $i$,
\begin{eqnarray*}
\alpha_i I^0_{\alpha_i}(\tilde{h}) \lesssim_i C(K',h)e^{\sup_{K'}2F_{\alpha_i}+f},
\end{eqnarray*}
where $ \lesssim_i $ means "less than" up to a multiplicative constant independent of $i$ and where $K'$ is a sufficiently large compact subset of $M$ independent of $i$ : as $(\alpha_i)_i$ lies in a compact set, the lower order terms $\alpha_i^2I^0_{\alpha_i-k}(\tilde{h})$, for $k>0$ can be absorbed by $I^0_{\alpha_i}(\tilde{h})$ independently of $i$. Hence, by letting $i$ go to $+\infty$,
 \begin{eqnarray*}
\alpha_{\infty} I^0_{\alpha_{\infty}}(\tilde{h}) \lesssim C(K',h)e^{\sup_{K'}2F_{\alpha_{\infty}}+f}<+\infty,
\end{eqnarray*}
i.e. $\alpha_{\infty}\in S$.

\end{proof}

\begin{claim}\label{S-ope} $S$ is open.
\end{claim}
\begin{proof}[Proof of claim \ref{S-ope}]
Analogously to \cite{Don-Ale-Spe}, define for $\epsilon>0$ and $\alpha\in S$,
\begin{eqnarray}
F_{\alpha,\epsilon,s}:=F_{\alpha}+\frac{\epsilon}{2}\chi_s(\ln v),
\end{eqnarray}
where, if $s$ is positive,
\begin{eqnarray*}
\chi_s(t):=\int_0^t\frac{dx}{1+s^2x^2}=\frac{\arctan(st)}{s}.
\end{eqnarray*}

First, we gather several technical remarks concerning $\chi_s$ in the following lemma :
\begin{lemma}\label{lem-est-ad-hoc-app-id} 
\begin{eqnarray*}
&&0\leq\chi_s(t)\leq c(s) \quad\mbox{on $[0,+\infty)$},\\
 &&\sup_{[0,+\infty)}t^{i-1}\arrowvert\chi_s^{(i)}(t)\arrowvert\leq c,\quad \mbox{for $i=1,2,3$}, \\
&&\lim_{s\rightarrow 0}\chi_s(t)=t,\quad\lim_{s\rightarrow 0}F_{\alpha,\epsilon,s}=F_{\alpha+\epsilon},\\
&&\arrowvert\nabla (F_{\alpha,\epsilon,s}-F_{\alpha})\arrowvert\leq\frac{c\epsilon}{v^{1/2}},\quad \arrowvert\nabla^2 (F_{\alpha,\epsilon,s}-F_{\alpha})\arrowvert\leq \frac{c\epsilon}{v},\\
&&\arrowvert\nabla f\cdot\nabla f\cdot\nabla (w_{\alpha,\epsilon,s}-w_{\alpha})\arrowvert=\frac{\epsilon}{2}\arrowvert\nabla f\cdot\nabla f\cdot\nabla\chi_s(\ln v)\arrowvert\leq c\epsilon.
\end{eqnarray*}
\end{lemma}
 One gets as in the proof of lemma \ref{lemm-cruc-C^0}, for any positive $s$ and $\alpha\in S$ and any tensor $h$ supported outside a sufficiently large compact set $K\subset M$,
\begin{eqnarray*}
&&\int_M<Hh,h>e^{2F_{\alpha,\epsilon,s}}+\left(\arrowvert\nabla F_{\alpha,\epsilon,s}\arrowvert^2+\nabla f\cdot(\arrowvert\nabla F_{\alpha,\epsilon,s}\arrowvert^2-\nabla f\cdot\nabla w_{\alpha,\epsilon,s})-\frac{v}{2}\right)\arrowvert h\arrowvert^2e^{2F_{\alpha,\epsilon,s}}d\mu_f\\
&&\lesssim  \|[(H-\lambda)h]e^{F_{\alpha,\epsilon,s}}\|^2_{L^2_f}\\
&&+\int_M-2\Ric(g)(\nabla T_{\alpha,\epsilon,s},\nabla T_{\alpha,\epsilon,s})+\arrowvert\nabla\Rm(g)\arrowvert\arrowvert\nabla T_{\alpha,\epsilon,s}\arrowvert\arrowvert T_{\alpha,\epsilon,s}\arrowvert +\arrowvert\nabla\Rm(g)\arrowvert\arrowvert\nabla f\arrowvert\arrowvert T_{\alpha,\epsilon,s}\arrowvert^{2} d\mu_f,\\
\end{eqnarray*}
where $T_{\alpha,\epsilon,s}:=he^{F_{\alpha,\epsilon,s}}.$
By the previous estimates on $F_{\alpha,\epsilon,s}$, one has for any positive $s$ and $\alpha\in S$ and any tensor $h$ supported outside a sufficiently large compact set $K\subset M$,
\begin{eqnarray*}
&&\int_M(<Hh,h>-c\epsilon)e^{2F_{\alpha,\epsilon,s}}+\left(\arrowvert\nabla F_{\alpha}\arrowvert^2+\nabla f\cdot(\arrowvert\nabla F_{\alpha}\arrowvert^2-\nabla f\cdot\nabla w_{\alpha})-\frac{v}{2}\right)\arrowvert h\arrowvert^2e^{2F_{\alpha,\epsilon,s}}d\mu_f\\
&&\lesssim  \|[(H-\lambda)h]e^{F_{\alpha,\epsilon,s}}\|^2_{L^2_f}\\
&&+\int_M-2\Ric(g)(\nabla T_{\alpha,\epsilon,s},\nabla T_{\alpha,\epsilon,s})+\arrowvert\nabla\Rm(g)\arrowvert\arrowvert\nabla T_{\alpha,\epsilon,s}\arrowvert\arrowvert T_{\alpha,\epsilon,s}\arrowvert +\arrowvert\nabla\Rm(g)\arrowvert\arrowvert\nabla f\arrowvert\arrowvert T_{\alpha,\epsilon,s}\arrowvert^{2} d\mu_f.\\
\end{eqnarray*}
We define the corresponding quantities $I^k_{\alpha,\epsilon,s}(h)$, $J^k_{\alpha,\epsilon,s}(h)$ depending on $\alpha$, $\epsilon$ and $s$ with obvious notations. Again, by the proof of lemma \ref{lemm-cruc-C^0}, there exists a compact set $K\subset M$ such that for any $\alpha\in S$, for any tensor $h$ compactly supported outside $K$ and any positive $s$, 
\begin{eqnarray}\label{ineq-openness}
(\alpha-\epsilon)I^0_{\alpha,\epsilon,s}(h)+\alpha^2I^0_{\alpha-2,\epsilon,s}(h)\lesssim_s \left(1+\frac{1}{\alpha}\right) \|[(H-\lambda)h]e^{F_{\alpha,\epsilon,s}}\|^2_{L^2_f}.
\end{eqnarray}
Mimicking the proof of theorem \ref{theo-fin-step}, with the same notations, define $\tilde{h}:=\eta h$, where $\eta$ is a smooth nonnegative function such that $\eta\equiv 0$ on $K$ and $\eta\equiv 1$ on $M\setminus K'$ with $K\subset K'$, one gets,
\begin{eqnarray*}
\|[(H-\lambda)\tilde{h}]e^{F_{\alpha,\epsilon,s}}\|_{L^2_f}\leq \|[V_0\ast\tilde{h}]e^{F_{\alpha,\epsilon,s}}\|_{L^2_f}+\|[V_1\ast\nabla\tilde{h}]e^{F_{\alpha,\epsilon,s}}\|_{L^2_f}+C(K',h)e^{\sup_{K'}F_{\alpha,\epsilon,s}+f/2}.
\end{eqnarray*}
As $V_0=\textit{o}(1)$ at infinity, $K$ can be chosen sufficiently large such that $\sup_{M\setminus K}\arrowvert V_0\arrowvert\leq\epsilon$. Therefore, the term $\|[V_0\ast\tilde{h}]e^{F_{\alpha,\epsilon,s}}\|_{L^2_f}$ can be absorbed by the left hand side of inequality (\ref{ineq-openness}). It suffices to handle the term involving the first covariant derivative of $\tilde{h}$. To do so, we proceed as in the proof of proposition \ref{prop-cov-der-est-wei-nor}, and we estimate as follows
\begin{eqnarray*}
\int_M\arrowvert\nabla \tilde{h}\arrowvert^2e^{2F_{\alpha-1,\epsilon,s}}d\mu_f&\lesssim_s&J^0_{\alpha-1,\epsilon,s}(\tilde{h})+I^0_{\alpha,\epsilon,s}(\tilde{h})+(\alpha+\epsilon) I^0_{\alpha-1,\epsilon,s}(\tilde{h})+(\alpha+\epsilon)^2 I^0_{\alpha-2,\epsilon,s}(\tilde{h}).
\end{eqnarray*}
As $V_1=\textit{o}(v^{-1/2})$, one can assume that $\sup_{M\setminus K}\arrowvert v^{1/2}V_1\arrowvert\leq \epsilon.$ Therefore, if $\epsilon$ is chosen small enough (depending on $\alpha$),
\begin{eqnarray*}
(\alpha-2\epsilon)I^0_{\alpha,\epsilon,s}\lesssim_s C(K',h)e^{\sup_{K'}(2F_{\alpha,\epsilon,s}+f)},
\end{eqnarray*}
for any positive $s$ which implies in particular that $he^{F_{\alpha+\epsilon}}\in L^2(d\mu_f)$, i.e. $S$ is open.

\end{proof}

\end{proof}

\section{Proof of theorem \ref{Main-theo-uni-inf-0}}\label{section-proof-main-theo}

We proceed as in the proof of theorem \ref{theo-sharp-dec-eig-tens-I}. Let's make some remarks first to simplify the analysis. Let $(M_i^n,g_i,\nabla^{g_i}f_i)_{i=0,1}$ be two normalized expanding gradient Ricci solitons that are asymptotically conical with isometric asymptotic cones $(C(X_i),dr^2+r^2g_{X_i},r\partial_r/2)_{i=0,1}$. Let $(\phi_{i,t})_{i=0,1}$ be the flows generated by the vector fields $(\nabla^{g_i}f_i/\arrowvert\nabla^{g_i}f_i\arrowvert^2)_{i=0,1}$. Then, by \cite{Der-Asy-Com-Egs}, one defines the following diffeomorphisms at infinity for $i=0,1$ :
\begin{eqnarray*}
\phi_i:(t_0,+\infty)\times X_i &\rightarrow& \{f_i+\mu(g_i)> t_0^2/4\}\\
(t,x)&\rightarrow& \phi_{i,\frac{t^2}{4}-\frac{t_0^2}{4}}(x),
\end{eqnarray*}
 where $X_i:= \{f_i+\mu(g_i)=t_0^2/4\}$, for $t_0$ large enough. These diffeomorphisms preserves the expanding structure in the sense of definition \ref{defn-asy-con-egs}. By pulling back tensors defined outside $ \{f_i+\mu(g_i)> t_0^2/4\}$ via the diffeomorphisms $\phi_i$, and up to another pull-back coming from the fact that the asymptotic cones are isometric, one is reduced to consider the following setting with some slight abuse of notations : let $(M^n\setminus K,g_i,\nabla^{g_i}f)_{i=0,1}$, where $K\subset M$ is compact, be two (incomplete) expanding  gradient Ricci solitons with the same asymptotic cone, same potential function and vanishing entropy, i.e. the following soliton identities hold : 
 \begin{eqnarray}
&&\Delta_{g_i}f=R_{g_i}+\frac{n}{2},\label{poisson-egs}\\
&&\arrowvert\nabla^{g_i}f\arrowvert^2+\R_{g_i}=f,\label{first-ham-egs}\\
&&\nabla^{g_i}\R_{g_i}+2\Ric(g_i)(\nabla^{g_i}f)=0.\label{Bianchi-egs}
\end{eqnarray}

 Now, as explained in the introduction, the metrics $(g_i)_{i=0,1}$ do not satisfy a nice strictly elliptic equation due to the invariance of the equation of an expanding gradient Ricci soliton under the action of the diffeomorphisms. Nonetheless, the Ricci tensor of an expander does satisfy a nice elliptic equation, more precisely, we have, in this setting, for $i=0,1$,
 \begin{eqnarray}\label{eq-ric-pull-back}
\Delta_{g_i,f}\Ric(g_i)+2\Rm(g_i)\ast\Ric(g_i)=-\Ric(g_i), \quad\mbox{on $M\setminus K$.}
\end{eqnarray}

Now, we consider the difference of the Ricci tensors $h:=\Ric(g_1)-\Ric(g_0)$ and the difference of the metrics  $H:=g_1-g_0$ and compute the evolution equation satisfied by $h$ with respect to the metric $g_0$ denoted once and for all by $g$. 

\begin{prop}\label{prop-mise-en-equ}
With the notations as above,
\begin{eqnarray}
\Delta_{g,f}h+h+\Rm(g)\ast h&=& \nabla^{g,2}H\ast W_0+ \nabla^{g}H\ast W_1+H\ast W_2,\label{eq-ric-I}\\
h\ast V_0&=&\nabla_{\nabla^gf}H+H\ast V_1,\label{eq-ric-II}
\end{eqnarray}
where $(W_i)_{i=0,1,2}$, $(V_i)_{i=1,2}$ are tensors made out of the potential function $f$ and the curvatures of $g$ and $g_1$ satisfying 
\begin{eqnarray*}
&&\nabla^{g,k}W_0=\textit{O}(v^{-1-k/2}),\quad \nabla^{g,k}W_1=\textit{O}(v^{-1/2-k/2}),\quad \nabla^{g,k}W_2=\textit{O}(v^{-1-k/2}),\\&&\nabla^{g,k}V_0=\textit{O}(v^{-k/2}),\quad \nabla^{g,k}V_1=\textit{O}(v^{-1-k/2}),
\end{eqnarray*}
for any nonnegative integer $k$.

\end{prop}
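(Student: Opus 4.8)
The plan is to obtain both identities by subtracting the corresponding structural equations for $g_1$ and $g=g_0$ and by expanding every metric-dependent object around the reference metric $g$ through the difference of Levi-Civita connections
$$A:=\nabla^{g_1}-\nabla^{g},\qquad A^k_{ij}=\tfrac12 g_1^{kl}\left(\nabla_iH_{jl}+\nabla_jH_{il}-\nabla_lH_{ij}\right),$$
which is a first-order algebraic expression in $\nabla^{g}H$ (below $\nabla=\nabla^{g}$). The guiding principle is that whenever a derivative falls on $\Ric(g_1)$, $\Rm(g_1)$ or on $\nabla^{g_1}f=g_1^{-1}df$, the resulting object is kept as a \emph{coefficient}: by $\HypI$ and the asymptotically conical structure one has $\nabla^k\Ric(g_1)=\textit{O}(v^{-1-k/2})$ and $\nabla^k\Rm(g_1)=\textit{O}(v^{-1-k/2})$, while $\arrowvert\nabla f\arrowvert^2=f-\R_g=\textit{O}(v)$ by (\ref{first-ham-egs}); the variable is always $H$ and its covariant derivatives. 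Higher powers of $H$ generated by the nonlinearity of $g\mapsto\Delta_g$ are reabsorbed into the coefficients $W_i,V_i$, which is legitimate since these are allowed to depend on $A$ and on the curvatures of both $g$ and $g_1$.

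For (\ref{eq-ric-I}) I would subtract the two copies of the Ricci equation (\ref{eq-ric-pull-back}). Writing
$$\Delta_{g_1,f}\Ric(g_1)-\Delta_{g,f}\Ric(g)=\Delta_{g,f}h+\left[\Delta_{g_1,f}-\Delta_{g,f}\right]\Ric(g_1),$$
the common operator $\Delta_{g,f}h$, the common curvature piece $2\Rm(g)\ast h$, and the relation $\Ric(g_1)-\Ric(g)=h$ assemble exactly the left-hand side $\Delta_{g,f}h+h+\Rm(g)\ast h$. The remaining operator difference is expanded through $A$: the difference of rough Laplacians produces $(g_1^{-1}-g^{-1})\ast\nabla^2\Ric(g_1)$, $g_1^{-1}\ast(\nabla A)\ast\Ric(g_1)$ and $g_1^{-1}\ast A\ast\nabla\Ric(g_1)$; the difference of drift terms produces $A\ast\nabla^{g_1}f\ast\Ric(g_1)$ and $(\nabla^{g_1}f-\nabla^{g}f)\ast\nabla\Ric(g_1)$; and the curvature term produces $(\Rm(g_1)-\Rm(g))\ast\Ric(g_1)$, whose principal part is $\nabla^2H\ast\Ric(g_1)$. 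Collecting the coefficients of $\nabla^2H$, $\nabla H$ and $H$ gives $W_0=\textit{O}(v^{-1})$ (from $\Ric(g_1)$ and the curvature difference), $W_1=\textit{O}(v^{-1/2})$ (dominated by $\nabla^{g_1}f\ast\Ric(g_1)$), and $W_2=\textit{O}(v^{-1})$ (dominated by $\nabla f\ast\nabla\Ric(g_1)$), the bounds on $\nabla^kW_i$ following by differentiating the coefficients.

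For (\ref{eq-ric-II}) I would instead difference the two expander Hessian identities $\nabla^{g_i,2}f=\Ric(g_i)+\tfrac12 g_i$, which gives $\nabla^{g_1,2}f-\nabla^{g,2}f=h+\tfrac12 H$. Since $f$ is common, the left-hand side is purely algebraic in $A$, namely $\nabla^{g_1,2}_{ij}f-\nabla^{g,2}_{ij}f=-A^k_{ij}\partial_kf=-\tfrac12(\nabla^{g_1}f)^m(\nabla_iH_{jm}+\nabla_jH_{im}-\nabla_mH_{ij})$. Isolating the radial term $\tfrac12\nabla_{\nabla f}H$ and rewriting each transverse term as $(\nabla f)^m\nabla_iH_{jm}=\nabla_i(\iota_{\nabla f}H)_j-(\nabla_i\nabla^mf)H_{jm}$, the second summand equals $-(\Ric(g)\ast H+\tfrac12 H)$ by the Hessian identity for $g$, an $H\ast V_1$ contribution, while replacing $\nabla^{g_1}f$ by $\nabla^{g}f$ costs only higher-order-in-$H$ terms reabsorbed into the coefficients. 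After this rearrangement one arrives at $h\ast V_0=\nabla_{\nabla f}H+H\ast V_1$ with $V_0=2\,\Id+\textit{O}(v^{-1})$ and $V_1=\textit{O}(v^{-1})$.

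I expect the decisive difficulty to be the two transverse terms $\nabla_i(\iota_{\nabla f}H)_j$ surviving in the derivation of (\ref{eq-ric-II}): this is \emph{not} an identity for an arbitrary pair of metrics, and its validity hinges on the radial gauge of Section \ref{section-proof-main-theo}, where the diffeomorphisms $\phi_i$ identify both solitons with $(t_0,+\infty)\times X$ through the level sets of the common potential $f$. In this gauge $\nabla f$ is $g_i$-normal to the level sets, and the normal components of $H$ are tied to the scalar-curvature difference via $\arrowvert\nabla^{g_1}f\arrowvert^2-\arrowvert\nabla^{g}f\arrowvert^2=\R_g-\R_{g_1}=\textit{O}(v^{-1})$, coming from (\ref{first-ham-egs}); together with the soliton identities (\ref{poisson-egs})–(\ref{Bianchi-egs}) this forces the transverse contributions into the $H\ast V_1$ form. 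The secondary difficulty is the bookkeeping of the connection-difference expansion itself, namely tracking which factor plays the role of decaying coefficient and which is the genuine variable $H$, so that the prescribed decay of the $W_i$ and $V_i$ is honestly recovered.
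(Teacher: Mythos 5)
Your proposal is correct, and for (\ref{eq-ric-I}) it coincides with the paper's proof: the paper subtracts the two copies of (\ref{eq-ric-pull-back}) and expands $\Delta_{g_1,f}-\Delta_{g,f}$ and $\Rm(g_1)-\Rm(g)$ through $\nabla^{g_1}T=\nabla^{g}T+g_1^{-1}\ast\nabla^{g}H\ast T$, arriving at exactly your coefficients $W_0\sim\Ric(g_1)$, $W_1\sim\nabla^{g}f\ast\Ric(g_1)+\nabla^{g}\Ric(g_1)+\nabla^{g}H\ast\Ric(g_1)$, $W_2\sim\nabla^{g}f\ast\nabla^{g}\Ric(g_1)+\nabla^{g,2}\Ric(g_1)$.

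For (\ref{eq-ric-II}) your route differs in packaging but not in substance, and it works. The paper differences the soliton equations in Lie-derivative form, $2h+H=\Li_{\nabla^{g_1}f}(g_1)-\Li_{\nabla^{g}f}(g)$, and splits this as $\Li_{\nabla^{g}f}(H)+\Li_{Z}(g_1)$ with $Z:=\nabla^{g_1}f-\nabla^{g}f$; the first piece is $\nabla_{\nabla^{g}f}H+H+\Ric(g)\ast H$ (the bare $H$ cancels), so the whole difficulty sits in $\Li_{Z}(g_1)$. The paper then notes, as you anticipated, that the rough estimate $Z=H\ast\nabla^{g}f$ is not enough, and uses precisely the gauge facts you identify: in the radial gauge the pulled-back metrics have no mixed $dt$-components and $g_i(\partial_t,\partial_t)=(1-f^{-1}\R_{g_i})^{-1}$ by (\ref{first-ham-egs}), whence $Z=(\R_g-\R_{g_1})f^{-1/2}\partial_t$; then the Bianchi identity (\ref{Bianchi-egs}) turns $\nabla^{g_1}Z$ into terms algebraic in $h$ and $H$, with \emph{no} derivatives of $H$ appearing at all. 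Your decomposition via $-A^k_{ij}\partial_k f$ produces the same radial term but in addition the transverse terms $\Sym\nabla(\iota_{\nabla f}H)$ and the quadratic terms $Z\ast\nabla H$; since $\iota_{\nabla f}H=-g_1(Z,\cdot)$, your transverse term is the paper's $\Li_{Z}(g_1)$ in disguise, and the same gauge-plus-Bianchi input disposes of it (and makes the $Z\ast\nabla H$ terms absorbable into coefficients, which is legitimate by the paper's own conventions, since its $W_1$ already contains a $\nabla^{g}H\ast\Ric(g_1)$ factor). Two bookkeeping corrections to your sketch, neither of which breaks it: (i) after Bianchi, the leading transverse contribution is $h\ast\nabla^{g}f\ast f^{-1/2}\partial_t$, an $h\ast\textit{O}(1)$ term that belongs on the $h\ast V_0$ side, not "into the $H\ast V_1$ form" as you state; (ii) the bare $\tfrac12 H$ coming from the Hessian identity for $g$ is not an $H\ast V_1$ contribution — it must cancel against the $\tfrac12 H$ on the left of your differenced Hessian identities (it does, with the signs as you set them up), and this cancellation is exactly what allows $V_1=\textit{O}(v^{-1})$ in the final statement.
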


\begin{proof}
Using (\ref{eq-ric-pull-back}),
\begin{eqnarray*}
\Delta_{g,f}h+h+\Rm(g)\ast h&=&\Delta_{g,f}\Ric(g_1)+\Ric(g_1)+\Rm(g)\ast \Ric(g_1)\\
&=&-\left[\left(\Delta_{g_1}-\Delta_g\right)+\left(\nabla^{g_1}_{\nabla^{g_1}f}-\nabla^{g}_{\nabla^{g}f}\right)+\left(\Rm(g_1)-\Rm(g)\right)\ast\right]\Ric(g_1).
\end{eqnarray*}

Now, if $T$ is a tensor,
\begin{eqnarray*}
\nabla^{g_1}T&=&\nabla^gT+g_1^{-1}\ast\nabla^g(g_1-g)\ast T=\nabla^gT+g_1^{-1}\ast\nabla^gH\ast T,\\
\nabla^{g_1,2}T&=&\nabla^{g,2}T+g_1^{-1}\ast\nabla^{g,2}H\ast T+ g_1^{-1}\ast\nabla^{g}H\ast \nabla^gT+g_1^{-2}\ast\nabla^{g}H^{\ast 2}\ast T,\\
\Rm(g_1)&=&\Rm(g)+g_1^{-1}\ast\nabla^{g,2}H+g_1^{-2}\ast\nabla^{g}H^{\ast 2},\\
\nabla^{g_1}_{\nabla^{g_1}f}T&=&\nabla^{g}_{\nabla^{g}f}T+g_1^{-1}\ast H\ast\nabla^g f\ast\nabla^g T+g_1^{-1}\ast\nabla^gH\ast\nabla^gf\ast T.
\end{eqnarray*}
Therefore, after dropping the contractions with $g_1^{-1}$,
\begin{eqnarray*}
\Delta_{g,f}h+h+\Rm(g)\ast h&=& \nabla^{g,2}H\ast \Ric(g_1)\\
&&+ \nabla^{g}H\ast (\nabla^g\Ric(g_1)+\nabla^gf\ast \Ric(g_1)+\nabla^{g}H\ast \Ric(g_1))\\
&&+ H\ast(\nabla^g f\ast\nabla^g \Ric(g_1)+\nabla^{g,2}\Ric(g_1)).\\
\end{eqnarray*}

Finally, we link the two quantities $H$ and $h$ with the help of the soliton equation together with the soliton identities given by lemma \ref{id-EGS}. 
\begin{eqnarray*}
2h=2(\Ric(g_1)-\Ric(g))&=&\Li_{\nabla^{g_1}f}(g_1)-\Li_{\nabla^gf}(g)-(g_1-g)\\
&=&\Li_{\nabla^gf}(g_1-g)+\Li_{\nabla^{g_1}f-\nabla^gf}(g_1)-(g_1-g)\\
&=&\Li_{\nabla^gf}(H)+\Li_{\nabla^{g_1}f-\nabla^gf}(g_1)-H\\
&=&\nabla^g_{\nabla^gf}H+\Ric(g)\ast H+\Li_{\nabla^{g_1}f-\nabla^gf}(g_1).
\end{eqnarray*}
It turns out that the rough estimate $\nabla^{g_1}f-\nabla^gf=H\ast\nabla^gf$ is not enough. Actually, by using the fact that the diffeomorphisms preserve the expanding structure,
\begin{eqnarray*}
\nabla^{g_1}f-\nabla^gf&=&(g_1^{tt}-g^{tt})\left(\sqrt{f}\partial_t\right),\\
(g_1)(\partial_t,\partial_t)&=&f\cdot g_1\left(\frac{\nabla^{g_1}f}{\arrowvert\nabla^{g_1} f\arrowvert^2},\frac{\nabla^{g_1}f}{\arrowvert\nabla^{g_1} f\arrowvert^2}\right)=\frac{f}{\arrowvert\nabla^{g_1} f\arrowvert^2}=\frac{1}{1-f^{-1}\R_{g_1}},\\
g(\partial_t,\partial_t)&=&\frac{1}{1-f^{-1}\R_{g}},\\
\end{eqnarray*}
where $t$ denotes the radial coordinate.
This implies, by using extensively the soliton identities (\ref{poisson-egs}), (\ref{first-ham-egs}), (\ref{Bianchi-egs}),
\begin{eqnarray*}
\nabla^{g_1}f-\nabla^gf&=&(\R_g-\R_{g_1})f^{-1/2}\partial_t,\\
\nabla^{g_1}(\nabla^{g_1}f-\nabla^gf)&=&\nabla^{g_1}(\R_g-\R_{g_1})\ast (f^{-1/2}\partial_t)+(\R_g-\R_{g_1})\nabla^{g_1}(f^{-1/2}\partial_t)\\
&=&\left[2\Ric(g_1)(\nabla^{g_1}f)-2\Ric(g)(\nabla^gf)+H\ast\nabla^g\R_g\right]\ast f^{-1/2}\partial_t\\
&&+[\tr_gh+H\ast\Ric(g_1)]\ast\nabla^{g_1}(f^{-1/2}\partial_t)\\
&=&h\ast\nabla^{g}f\ast f^{-1/2}\partial_t+\tr_gh\ast \nabla^{g_1}(f^{-1/2}\partial_t)\\
&&+H\ast\left([\Ric(g_1)\ast\nabla^g f+\nabla^g\R_g]\ast (f^{-1/2}\partial_t)+\Ric(g_1)\ast\nabla^{g_1}(f^{-1/2}\partial_t)\right).
\end{eqnarray*}
Therefore,
\begin{eqnarray*}
h\ast V_0=\nabla^g_{\nabla^gf}H+H\ast V_1,
\end{eqnarray*}
where $V_0$ is a tensor satisfying $\nabla^{g,k}V_0=\textit{O}(v^{-k/2})$, for any nonnegative integer $k$ and $V_1$ behaves at infinity like the Ricci curvature of $g$ or $g_1$, that is $\nabla^{g,k}V_1=\textit{O}(v^{-1-k/2})$, for any nonnegative integer $k$.

\end{proof}

\subsection{Final step of the proof of theorem \ref{Main-theo-uni-inf-0}}

As in subsection \ref{subsec-pol-dec-a-prio-I}, we prove theorem \ref{Main-theo-uni-inf-0} in case we know a priori that the difference $h:=\Ric(g_1)-\Ric(g)$ decays faster than polynomially at infinity. \begin{theo}\label{theo-fin-step-II}
With the above notations, assume that $he^{F_{\alpha}}\in L^2(d\mu_f)$ and $\left(\nabla^{g,k}H\right)e^{F_{\alpha+2-k}}\in L^2(d\mu_f)$ for any $\alpha >0$ and $k=0,1,2$, then $h\equiv 0$ outside a compact set. In particular, it implies $H=g_1-g=0$ outside a compact set.
\end{theo}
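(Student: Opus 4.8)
The plan is to reproduce the scheme of theorem \ref{theo-fin-step}, now for the coupled system (\ref{eq-ric-I})--(\ref{eq-ric-II}). I would apply the Carleman inequality of lemma \ref{lemm-cruc-C^0} to $\tilde h:=\eta h$, where $\eta$ vanishes on the compact set $K$ on which lemma \ref{lemm-cruc-C^0} holds and equals $1$ outside a slightly larger compact set $K'$, with the fixed eigenvalue $\lambda=1$ prescribed by (\ref{eq-ric-I}). Reading (\ref{eq-ric-I}) as $(-\Delta_f-1)h=\Rm(g)\ast h-\nabla^{g,2}H\ast W_0-\nabla^g H\ast W_1-H\ast W_2$, lemma \ref{lemm-cruc-C^0} yields, for $\alpha$ large,
\begin{eqnarray*}
\alpha I^0_\alpha(\tilde h)+\alpha^2 I^0_{\alpha-2}(\tilde h)\lesssim\left(1+\frac{1}{\alpha}\right)\|[(-\Delta_f-1)\tilde h]e^{F_\alpha}\|^2_{L^2_f}.
\end{eqnarray*}
As in (\ref{est-J-0-alpha}) the commutator $[-\Delta_f,\eta]h$ is supported in $K'\setminus K$ and produces a boundary term $C(K',h)e^{\sup_{K'}2F_\alpha+f}$, while $\Rm(g)\ast\tilde h=\textit{O}(v^{-1})\tilde h$ is absorbed on the left. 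The genuinely new feature, absent in theorem \ref{theo-fin-step}, is that the surviving right-hand side involves $H,\nabla^g H,\nabla^{g,2}H$, which are a priori unrelated to $h$.

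The crucial step is to close the loop using the transport equation (\ref{eq-ric-II}), rewritten as $\nabla_{\nabla f}H=h\ast V_0-H\ast V_1$ with $V_0=\textit{O}(1)$ and $V_1=\textit{O}(v^{-1})$. Applying estimate (\ref{est-sec-fun-form-II}) of proposition \ref{prop-control-rad-der} and expanding the right-hand side by the Leibniz rule, I obtain, for $k\leq2$ and $\beta$ large,
\begin{eqnarray*}
\sum_{i=0}^k\left(\beta^2 I^i_{\beta-1}(H)+\beta I^i_\beta(H)+I^i_{\beta+1}(H)\right)\lesssim\sum_{i=0}^k\left(I^i_{\beta-1}(h)+I^i_{\beta-3}(H)\right),
\end{eqnarray*}
where the decay $V_1=\textit{O}(v^{-1})$ produces the two-unit weight drop in $I^i_{\beta-3}(H)$, which is dominated by $\beta^{-2}$ times the leading term $\beta^2 I^i_{\beta-1}(H)$ and hence absorbed on the left. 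This expresses every weighted norm of $H$ and of its first two covariant derivatives in terms of the corresponding norms of $h$ and $\nabla^i h$ ($i\leq2$), up to a boundary term; the derivatives $\nabla^i h$ are then reduced to $I^0$- and $J^0$-norms of $h$ by proposition \ref{prop-cov-der-est-wei-nor}.

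Substituting these bounds into the Carleman inequality, each $H$-contribution to $\|[(-\Delta_f-1)\tilde h]e^{F_\alpha}\|^2_{L^2_f}$ carries both the decay of the coefficient $W_i$ and the weight margin gained from (\ref{eq-ric-II}); thanks to $W_0,W_2=\textit{O}(v^{-1})$ and $W_1=\textit{O}(v^{-1/2})$, these terms are all dominated by small multiples of $\alpha I^0_\alpha(\tilde h)+\alpha^2 I^0_{\alpha-2}(\tilde h)$ (the smallness coming from the coefficient decay and large $\alpha$), plus lower-order $I^0$- and $J^0$-norms of $h$ and a boundary term. The only point requiring care is that the $J^0$-norms feed back, through (\ref{eq-ric-I}), into the $H$-terms again; this apparent circularity is resolved because each passage through a coefficient $W_i$ or $V_1$ strictly lowers the weight, so the bootstrap terminates after finitely many steps. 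Exactly as in theorem \ref{theo-fin-step} one is left with $\alpha I^0_\alpha(\tilde h)\lesssim C(K',h)e^{\sup_{K'}2F_\alpha+f}$; taking $K'=\{f\leq t_0\}$ and letting $\alpha\to+\infty$ forces $h\equiv0$ outside a compact set, hence $h\equiv0$ on $M\setminus K$. Once $h$ vanishes, (\ref{eq-ric-II}) becomes the homogeneous transport equation $\nabla_{\nabla f}H=H\ast V_1$; integrating $\partial_t|H|^2=\textit{O}(v^{-2})|H|^2$ along the Morse flow of $\nabla f/|\nabla f|^2$ as in the proof of claim \ref{S-non-emp} shows that along each forward flow line $|H|$ cannot decay below a fixed positive multiple of its value, which is incompatible with $He^{F_{\alpha+2}}\in L^2(d\mu_f)$ unless $H\equiv0$ outside a compact set.

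I expect the main obstacle to be the weight bookkeeping in the closing-the-loop step. The Ricci difference $h$ solves the good elliptic equation (\ref{eq-ric-I}), but the metric difference $H$ only satisfies the transport equation (\ref{eq-ric-II}), which controls solely the radial derivative $\nabla_{\nabla f}H$; converting this into control of the full tensor $H$ and its two covariant derivatives in the exponentially weighted spaces forces one to track the precise two-unit weight gain recorded in the hypotheses on $\nabla^{g,k}H$. The delicate part is to verify that, after this conversion, all $H$-contributions to the Carleman right-hand side are genuinely of lower order relative to the leading term $\alpha I^0_\alpha(\tilde h)$, so that the diffeomorphism-invariance degeneracy of the metric equation never obstructs the absorption; this is exactly the reason the optimal decay of the inhomogeneity, flagged in the remark after theorem \ref{a-prio-point-bd-orns-op}, is needed here.
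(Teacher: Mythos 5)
Your proposal is correct and follows essentially the same route as the paper: the Carleman inequality of lemma \ref{lemm-cruc-C^0} applied to $\tilde h=\eta h$ with $\lambda=1$, the transport equation (\ref{eq-ric-II}) combined with propositions \ref{prop-control-rad-der} and \ref{prop-cov-der-est-wei-nor} to convert all $H$-contributions into weighted norms of $h$ that are absorbed (via the weight drop from the decay of $W_i$, $V_1$ and largeness of $\alpha$), and the limit $\alpha\to+\infty$ against the $\alpha$-independent boundary constant $C(K')e^{\sup_{K'}(2F_\alpha+f)}$. The only cosmetic deviations are your phrasing of the absorption of lower-weight $J^0$-terms as a "terminating bootstrap" (the paper absorbs them directly, since each unit drop in weight yields a factor $\sup_{M\setminus K}v^{-1}$) and your final step for $H$, which uses the weighted $L^2$ hypothesis in place of the paper's appeal to $\lim_{+\infty}H=0$; both are equivalent given the Gronwall estimate along the Morse flow.
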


\begin{rk}

The condition on the covariant derivatives of $H$ is consistent with proposition \ref{prop-control-rad-der} and equation (\ref{eq-ric-II}).
\end{rk}

\begin{proof}
Let $K$ be a compact subset of $M$ such that lemma \ref{lemm-cruc-C^0} holds for tensors supported outside $K$. By equation (\ref{eq-ric-I}), we also know that $(\Delta_fh)e^{F_{\alpha}}\in L^2(d\mu_f)$ for any positive $\alpha$ and lemma \ref{prop-cov-der-est-wei-nor} implies $(\nabla^{g,k}h)e^{F_{\alpha-k}}\in L^2(d\mu_f)$ for any $k=1,2$ and any positive $\alpha$.

Let $\eta$ be a smooth nonnegative function on $M$ such that $\eta\equiv 0$ on $K$ and $\eta\equiv 1$ on $M\setminus K'$ with $K\subset K'$. Apply lemma \ref{lemm-cruc-C^0} to $\tilde{h}:=\eta h$ : 
\begin{eqnarray}
\alpha I^0_{\alpha}(\tilde{h})+\alpha^2 I^0_{\alpha-2}(\tilde{h})\l\lesssim \|[(\Delta_{g,f}+1)\tilde{h}]e^{F_{\alpha}}\|^2_{L^2_f}.\label{eq-amorce}
\end{eqnarray}

On the other hand, $\tilde{h}$ and $\tilde{H}:=\eta H$ satisfy the following equations similar to equations (\ref{eq-ric-I}) and (\ref{eq-ric-II}) : 
\begin{eqnarray}
\Delta_{g,f}\tilde{h}+\tilde{h}+2\Rm(g)\ast \tilde{h}&=& \nabla^{g,2}\tilde{H}\ast W_0+ \nabla^{g}\tilde{H}\ast W_1+\tilde{H}\ast W_2+T(h,H,\eta)\label{eq-ric-I-bis}\\
\tilde{h}\ast V_0&=&\nabla_{\nabla^gf}\tilde{H}+\tilde{H}\ast V_1+T(H,\eta),\label{eq-ric-II-bis}
\end{eqnarray}
 where $(W_k)_{k=0,1,2}$ and $(V_i)_{i=1,2}$ are tensors decaying at infinity as in equations (\ref{eq-ric-I}) and (\ref{eq-ric-II}) and where $T(h,H,\eta)$ and $T(H,\eta)$ are compactly supported tensors. 
For $\alpha$ large enough so that propositions \ref{prop-control-rad-der} and \ref{prop-cov-der-est-wei-nor} are applicable, we estimate the righthand side of (\ref{eq-amorce}) as follows using freely equations (\ref{eq-ric-I-bis}) and (\ref{eq-ric-II-bis}) : 
\begin{eqnarray*}
\|[(\Delta_{g,f}+1)\tilde{h}]e^{F_{\alpha}}\|^2_{L^2_f}&\lesssim&e^{\sup_{K'}(2F_{\alpha}+f)}+\sum_{k=0}^2\|[\nabla^{g,2-k}\tilde{H}\ast W_k]e^{F_{\alpha}}\|^2_{L^2_f}+I_{\alpha-1}^0(\tilde{h}).
\end{eqnarray*}
Now,
\begin{eqnarray*}
\|[\tilde{H}\ast W_2]e^{F_{\alpha}}\|^2_{L^2_f}&\lesssim& I^0_{\alpha}(\tilde{H})\l\lesssim I^0_{\alpha-2}(\nabla^g_{\nabla^g f}\tilde{H})\\
&\lesssim&I^0_{\alpha-2}(\tilde{h})+e^{\sup_{K'}(2F_{\alpha}+f)},\\
\|[\nabla^g\tilde{H}\ast W_1]e^{F_{\alpha}}\|^2_{L^2_f}&\lesssim&\int_M\arrowvert\nabla^g\tilde{H}\arrowvert^2e^{2F_{\alpha}}d\mu_f\\
&\lesssim&I^1_{\alpha-1}(\tilde{H})\lesssim\alpha^{-1}(I^1_{\alpha-2}(\tilde{h})+I^0_{\alpha-2}(\tilde{h}))+e^{\sup_{K'}(2F_{\alpha}+f)}\\
&\lesssim&\alpha^{-1}\left[J^0_{\alpha-1}(\tilde{h})+I^0_{\alpha}(\tilde{h})+\alpha I^0_{\alpha-1}(\tilde{h})+\alpha^2 I^0_{\alpha-2}(\tilde{h})\right]+e^{\sup_{K'}(2F_{\alpha}+f)}\\
&\lesssim&\|[(\Delta_{g,f}+1)\tilde{h}]e^{F_{\alpha-1}}\|^2_{L^2_f}+I^0_{\alpha}(\tilde{h})+\alpha I^0_{\alpha-1}(\tilde{h})+e^{\sup_{K'}(2F_{\alpha}+f)},\\
\|[\nabla^{g,2}\tilde{H}\ast W_0]e^{F_{\alpha}}\|^2_{L^2_f}&\lesssim&\int_M\arrowvert\nabla^{g,2}\tilde{H}\arrowvert^2e^{2F_{\alpha-2}}d\mu_f=I^2_{\alpha-4}(\tilde{H})\\
&\lesssim&\alpha^{-2}\left(\sum_{i=0}^2I^i_{\alpha-4}(\tilde{h})\right)+e^{\sup_{K'}(2F_{\alpha}+f)}\\
&\lesssim&\alpha^{-2}\left(J^0_{\alpha-1}(\tilde{h})+\alpha J^0_{\alpha-2}(\tilde{h})+\alpha^2 J^0_{\alpha-3}(\tilde{h})\right)+\sum_{\arrowvert i\arrowvert\leq 2}\alpha^{-i}I^0_{\alpha-2+i}(\tilde{h})\\
&&+\alpha^{-2}(J^0_{\alpha-3}(\tilde{h})+I^0_{\alpha-2}(\tilde{h})+\alpha I^0_{\alpha-3}(\tilde{h})+\alpha^2 I^0_{\alpha-4}(\tilde{h}))\\
&&+\alpha^{-2}I^0_{\alpha-4}(\tilde{h})+e^{\sup_{K'}(2F_{\alpha}+f)}\\
&\lesssim&J^0_{\alpha-1}(\tilde{h})+I^0_{\alpha}(\tilde{h})+\alpha^2I^0_{\alpha-3}(\tilde{h})+e^{\sup_{K'}(2F_{\alpha}+f)}\\
&\lesssim&\|[(\Delta_{g,f}+1)\tilde{h}]e^{F_{\alpha-1}}\|^2_{L^2_f}+I^0_{\alpha}(\tilde{h})+\alpha^2I^0_{\alpha-3}(\tilde{h})+e^{\sup_{K'}(2F_{\alpha}+f)}.
\end{eqnarray*}
Therefore, by the previous estimates, one gets :
\begin{eqnarray*}
\|[(\Delta_{g,f}+1)\tilde{h}]e^{F_{\alpha}}\|^2_{L^2_f}&\lesssim&\|[(\Delta_{g,f}+1)\tilde{h}]e^{F_{\alpha-1}}\|^2_{L^2_f}+I^0_{\alpha}(\tilde{h})+\alpha I^0_{\alpha-1}(\tilde{h})+\alpha^2I^0_{\alpha-3}(\tilde{h})+e^{\sup_{K'}(2F_{\alpha}+f)},
\end{eqnarray*}
that is,
\begin{eqnarray}
\|[(\Delta_{g,f}+1)\tilde{h}]e^{F_{\alpha}}\|^2_{L^2_f}&\lesssim&I^0_{\alpha}(\tilde{h})+\alpha I^0_{\alpha-1}(\tilde{h})+\alpha^2I^0_{\alpha-3}(\tilde{h})+e^{\sup_{K'}(2F_{\alpha}+f)},\label{eq-amorce-ii}
\end{eqnarray}
Inequalities (\ref{eq-amorce}) and (\ref{eq-amorce-ii}) show that, for any $\alpha$ large enough,
\begin{eqnarray*}
\alpha I^0_{\alpha}(\tilde{h})+\alpha^2I^0_{\alpha-2}(\tilde{h})\lesssim e^{\sup_{K'}(2F_{\alpha}+f)},
\end{eqnarray*}
for some compact $K'\subset M$ independent of $\alpha$. By the very definition of $I^0_{\alpha}(\tilde{h})$, one has
\begin{eqnarray*}
\alpha\int_M\arrowvert\tilde{h}\arrowvert^2d\mu_f\leq C(K'),
\end{eqnarray*}
where $C(K')$ is a positive constant independent of $\alpha$. Therefore, $\tilde{h}\equiv 0$, i.e. $h\equiv 0$ outside a compact set. Using equation (\ref{eq-ric-II}) together with the fact that $\lim_{+\infty}H=0$ give $H\equiv 0$ outside a compact set.

\end{proof}

\subsection{A priori faster than polynomial decay}\label{subsec-pol-dec-a-prio-II}

With the notations of section \ref{section-proof-main-theo}, define analogously to \ref{subsec-pol-dec-a-prio-I}, 
\begin{eqnarray*}
S:=\{\alpha\in\mathbb{R}\quad|\quad he^{F_{\alpha}}\in L^2(d\mu_f),\quad \left(\nabla^{g,k}H\right)e^{F_{\alpha+2-k}}\in L^2(d\mu_f), \quad k=0,1,2\}.
\end{eqnarray*}

\begin{theo}\label{theo-main-S}
 With the notations of section \ref{section-proof-main-theo}, $$S=\mathbb{R}.$$
\end{theo}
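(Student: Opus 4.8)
The plan is to follow the three-step scheme used for theorem~\ref{theo-sharp-dec-eig-tens} and show that $S$ is a non-empty, open and closed subset of $\mathbb{R}$, so that $S=\mathbb{R}$ by connectedness. Here $h=\Ric(g_1)-\Ric(g)$ plays the role of the eigentensor, with $\lambda=1$ in view of (\ref{eq-ric-I}), and the genuinely new feature is that $S$ now also records the weighted $L^2$ membership of $H=g_1-g$ together with its first two covariant derivatives. The coupling between $h$ and $H$ is dictated by the system (\ref{eq-ric-I})--(\ref{eq-ric-II}); the relation (\ref{eq-ric-II}), which reads $\nabla_{\nabla^g f}H=h\ast V_0-H\ast V_1$ up to lower order, lets one trade a radial derivative of $H$ against $h$, and proposition~\ref{prop-control-rad-der} upgrades such radial control into control of the full covariant derivatives at the correct weight shift. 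Transmitting a gain of decay from $h$ to $H$ and back through this mechanism is the recurring device of the whole argument.

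For non-emptiness I would invoke the asymptotic analysis of \cite{Der-Asy-Com-Egs}: the isometry of the two asymptotic cones forces the formal expansions of $g$ and $g_1$ to coincide, so that both $H$ and $h$ decay faster than any power of $v$. In particular $h_{\lambda}=v^{n/2-1}e^v h$ is bounded, and since, after identifying $r^2/4$ with $v$, the obstruction tensor equals $\lim_{+\infty}h_{\lambda}$ restricted to the link, the standing assumption that it vanishes reads $\lim_{+\infty}h_{\lambda}=0$. The exponential decay of $H,\nabla H,\nabla^2 H$ makes the inhomogeneity on the right of (\ref{eq-ric-I}) admissible as a $Q$-term for theorem~\ref{a-prio-point-bd-orns-op}; integrating the resulting backward-heat equation for $h_{\lambda}$ along the Morse flow of $\nabla^g f/|\nabla^g f|^2$, exactly as in claim~\ref{S-non-emp}, improves the decay to $h_{\lambda}=O(v^{-1})$, while (\ref{eq-ric-II}) and proposition~\ref{prop-control-rad-der} transmit this to the weighted $L^2$ bounds on $H$ and its derivatives demanded by $S$. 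Hence some half-line $(-\infty,\alpha_0]$ lies in $S$, so $S\neq\emptyset$.

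Closedness would follow as in claim~\ref{S-clo}. Along a sequence $(\alpha_i)_i\subset S$ converging to $\alpha_\infty$ inside a compact interval, the estimate established in theorem~\ref{theo-fin-step-II}, namely $\alpha I^0_{\alpha}(\tilde h)+\alpha^2 I^0_{\alpha-2}(\tilde h)\lesssim e^{\sup_{K'}(2F_\alpha+f)}$ with $\tilde h=\eta h$, holds with a multiplicative constant and a compact set $K'$ independent of $i$, because the lower order terms $\alpha_i^2 I^0_{\alpha_i-k}(\tilde h)$ are absorbed uniformly once $(\alpha_i)_i$ is bounded. Letting $i\to+\infty$ gives $he^{F_{\alpha_\infty}}\in L^2(d\mu_f)$, and (\ref{eq-ric-II}) together with proposition~\ref{prop-control-rad-der} carries the $H$-norms to the limit, so $\alpha_\infty\in S$.

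Openness is the crucial step and is handled as in claim~\ref{S-ope}. For $\alpha\in S$ I would run the Carleman inequality of lemma~\ref{lemm-cruc-C^0} with the regularized weight $F_{\alpha,\epsilon,s}=F_\alpha+\tfrac{\epsilon}{2}\chi_s(\ln v)$, legitimate for each fixed $s>0$ precisely because membership in $S$ guarantees finiteness of all quantities involved, and then reproduce the bootstrap of theorem~\ref{theo-fin-step-II}: the contributions $\nabla^{g,2-k}\tilde H\ast W_k$ in (\ref{eq-ric-I-bis}) are estimated through propositions~\ref{prop-cov-der-est-wei-nor} and~\ref{prop-control-rad-der}, re-expressed in terms of $\tilde h$ by (\ref{eq-ric-II-bis}), and absorbed into the left-hand side once $\epsilon$ is small compared with $\alpha$. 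This yields, uniformly in $s$, an inequality of the form $(\alpha-2\epsilon)I^0_{\alpha,\epsilon,s}(\tilde h)\lesssim_s C(K',h)e^{\sup_{K'}(2F_{\alpha,\epsilon,s}+f)}$, whence $he^{F_{\alpha+\epsilon}}\in L^2(d\mu_f)$ after letting $s\to 0$; propagating the improvement to $H$ through (\ref{eq-ric-II}) and proposition~\ref{prop-control-rad-der} then gives $[\alpha,\alpha+\epsilon]\subset S$. The main obstacle, present at every step, is exactly this simultaneous propagation: one must check that each gain in the decay of $h$ is transmitted, at the weight shift prescribed by the definition of $S$, to $H$, $\nabla H$ and $\nabla^2 H$, and it is the algebraic structure of (\ref{eq-ric-II}) combined with proposition~\ref{prop-control-rad-der} that makes this transmission possible.
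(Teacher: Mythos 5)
Your overall architecture coincides with the paper's: the paper also proves $S=\mathbb{R}$ by a connectedness-type argument (phrased as a contradiction about $\alpha_+:=\sup\{\alpha\,:\,\dots\}$, whose two halves are exactly your closedness and openness steps), with the Carleman inequality of lemma \ref{lemm-cruc-C^0} applied to $\tilde h=\eta h$, the $\tilde H$-terms of (\ref{eq-ric-I-bis}) converted back into $\tilde h$-terms via (\ref{eq-ric-II-bis}) and propositions \ref{prop-cov-der-est-wei-nor}, \ref{prop-control-rad-der}, and the regularized weights $F_{\alpha,\epsilon,s}$ for openness. Those two steps of yours are essentially faithful to the paper; the one imprecision there is that, for bounded $\alpha_i$, the absorption of the top-order contribution coming from $\nabla^{g,2}\tilde H\ast W_0$ cannot be justified ``because $(\alpha_i)_i$ is bounded'': since large-$\alpha$ absorption is unavailable, the paper uses the pinching $\HypI$, i.e. $\limsup_{+\infty}f\arrowvert W_0\arrowvert\leq\epsilon$ with $\epsilon$ small, and this is precisely where that hypothesis enters the closedness argument.

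The genuine gap is in your non-emptiness step. You base the initial decay on the claim that isometry of the asymptotic cones ``forces the formal expansions of $g$ and $g_1$ to coincide'', hence that $H$ and $h$ decay faster than any power of $v$, and you then deduce ``in particular'' that $h_{\lambda}=v^{n/2-1}e^vh$ is bounded. No such formal-expansion uniqueness statement is proved in this paper or invoked from \cite{Der-Asy-Com-Egs}, and even if it were granted it would not suffice: super-polynomial decay of $h$ does not imply boundedness of $v^{n/2-1}e^vh$, which requires the exponential rate $O(v^{1-n/2}e^{-v})$, and super-polynomial decay of $H$ does not yield the rescaled bounds $v^{k/2}\arrowvert\nabla^{g,k}(v^{n/2}e^vH)\arrowvert=O(1)$ needed to treat the right-hand side of (\ref{eq-ric-I}) as an admissible $Q$-term in theorem \ref{a-prio-point-bd-orns-op}. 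The paper's derivation runs the other way round, entirely from the standing hypotheses on the obstruction tensor: its well-definedness gives $h=O(v^{1-n/2}e^{-v})$ directly; integrating (\ref{eq-ric-II}) along the Morse flow of $f$, using $H\to 0$, gives $H=O(v^{-n/2}e^{-v})$; smoothness of the obstruction tensor, via commutation formulae and (\ref{eq-ric-II}) again, gives the rescaled derivative bounds on $H$; only then do the rescaled equation (\ref{eq-ell-resc-ric-curv-diff}), the vanishing $\lim_{+\infty}h_1=0$ and Morse-flow integration yield $h_1=O(v^{-1})$; finally the memberships $(\nabla^{g,k}H)e^{F_{\alpha+2-k}}\in L^2(d\mu_f)$, $k=1,2$, are obtained by interpolation inequalities (with an $\epsilon$-loss, whence only small $\alpha>0$ enters $S$ at this stage), not by proposition \ref{prop-control-rad-der}, which is stated for tensors supported outside a compact set and cannot be invoked before finiteness of the norms in question is known. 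With the first step rewritten along these lines, the rest of your argument matches the paper.
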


\begin{proof}[Proof of theorem \ref{theo-main-S}]
\begin{claim}\label{claim-non-empty-ii}
$S$ is non empty.
\end{claim}

\begin{proof}[Proof of claim \ref{claim-non-empty-ii}]
First of all, we evaluate how fast the tensor $H$ together with its covariant derivatives decay at infinity.

By assumption on the obstruction tensor, i.e. $h=\textit{O}(v^{1-n/2}e^{-v})$, together with the fact that the asymptotic cones are isometric, integrating equation (\ref{eq-ric-II}) along the Morse flow generated by the potential function $f$ leads to 
\begin{eqnarray*}
H=\textit{O}(v^{-n/2}e^{-v}).
\end{eqnarray*}
As the obstruction tensor is smooth, i.e. the rescaled covariant derivative $v^{k/2}\nabla^{g,k}(v^{n/2-1}e^v h)$ is bounded for any nonnegative integer $k$, we have the following estimates by using commutation formulae together with equation (\ref{eq-ric-II}) : 
\begin{eqnarray*}
v^{k/2}\nabla^{g,k}\left(v^{n/2}e^vH\right)=\textit{O}(1),
\end{eqnarray*}
for any nonnegative integer $k\geq 0$. 

Now, going back to equation (\ref{eq-ric-I}), one has
\begin{eqnarray*}
&&\Delta_{g,f}h+h+2\Rm(g)\ast h=Q,\\
\end{eqnarray*}
where $Q$ satisfies 
\begin{eqnarray*}
v(v^{n/2-1}e^vQ)&=&v^{n/2}e^vQ=(v^{n/2}e^vH)\ast W_2+(v^{n/2}e^v\nabla^gH)\ast W_1+(v^{n/2}e^v\nabla^{g,2}H)\ast W_0\\
&=&(v^{n/2}e^vH)\ast \bar{W}_2+\nabla^g(v^{n/2}e^vH)\ast \bar{W}_1+\nabla^{g,2}(v^{n/2}e^vH)\ast \bar{W}_0,
\end{eqnarray*}
where $(\bar{W}_{i})_{i=0,1,2}$ are smooth tensors satisfying 
\begin{eqnarray*}
v^{k/2}\nabla^{g,k}\bar{W}_2=\textit{O}(1),\quad v^{k/2}\nabla^{g,k}\bar{W}_1=\textit{O}(v^{-1/2}),\quad v^{k/2}\nabla^{g,k}\bar{W}_0=\textit{O}(v^{-1}),
\end{eqnarray*}
for any nonnegative integer $k\geq 0$. Recall now that, by equation (\ref{ell-eq-resc-sol}) established in the proof of theorem \ref{a-prio-point-bd-orns-op} that $v^{n/2-1}e^vh=:h_1$ satisfies
\begin{eqnarray}\label{eq-ell-resc-ric-curv-diff}
\Delta_{g,-f}h_1=&\tilde{W}_0\ast h_{1}+\tilde{W}_1\ast\nabla h_{1}+Q_{1},
\end{eqnarray}
where $Q_1:=v^{n/2-1}e^vQ$ and $(\tilde{W}_i)_{i=0,1}$ satisfy
\begin{eqnarray*}
v^{k/2}\nabla^{g,k}(v\tilde{W}_0)=\textit{O}(1),\quad v^{k/2}\nabla^{g,k} (v^{1/2} \tilde{W}_1)=\textit{O}(1),\quad\forall k\geq 0.
\end{eqnarray*}
Finally, by integrating equation (\ref{eq-ell-resc-ric-curv-diff}) along the Morse flow generated by $f$, one has :
\begin{eqnarray*}
h_1=\textit{O}(v^{-1}).
\end{eqnarray*}
In particular, $he^{F_{\alpha}}\in L^2(d\mu_f)$ for any $\alpha<2$. Integrating equation (\ref{eq-ric-II}) again along the  Morse flow generated by $f$ gives 
\begin{eqnarray*}
v^{n/2}e^vH=\textit{O}(v^{-1}),
\end{eqnarray*}
 which implies $He^{F_{\alpha+2}}\in L^2(d\mu_f)$ for any $\alpha<2$. Now, by standard interpolation inequalities applied to $H$, one has $\nabla H=\textit{O}(v^{1/2-\epsilon-n/2}e^{-v})$ and $\nabla^{g,2}H=\textit{O}(v^{1-\epsilon-n/2}e^{-v})$, for some positive $\epsilon$ which implies the result for some positive $\alpha$ sufficiently small compared to $\epsilon$.

\end{proof}

\begin{claim}\label{claim-non-empty-ii}
Define $$\alpha_{+}:=\sup\{\alpha\in\mathbb{R}|\forall \beta\in(-\infty,\alpha),\quad he^{F_{\beta}}\in L^2(d\mu_f),\quad  \left(\nabla^{g,k}H\right)e^{F_{\alpha+2-k}}\in L^2(d\mu_f), \quad k=0,1,2\}.$$ Then $$\alpha_{+}=+\infty.$$
\end{claim}

\begin{proof}[Proof of claim \ref{claim-non-empty-ii}]
Assume on the contrary that $\alpha_{+}<+\infty$. Then $\alpha_+\in S$. 

Indeed, it amounts to show that $S$ is closed. Let $(\alpha_i)_i$ be a sequence of positive number in $S$ converging to $\alpha_+$. The proof consists in adapting the arguments of the proof of theorem \ref{theo-fin-step} : the only difference is that we do not need to take into account the dependence on the multiplicative constants involving the parameter $\alpha$. Therefore, the symbol $\lesssim_i$ will mean up to a multiplicative constant uniform in the indices $i$. Starting from equations (\ref{eq-ric-I-bis}) and (\ref{eq-ric-II-bis}) with the same notations, we get for any index $i$, analogously to inequality (\ref{eq-amorce}),
\begin{eqnarray}\label{eq-amorce-bis}
 I^0_{\alpha_i}(\tilde{h})\lesssim_i \|[(\Delta_{g,f}+1)\tilde{h}]e^{F_{\alpha_i}}\|^2_{L^2_f}.
\end{eqnarray}
Now, we proceed to estimate from above the righthand side of the previous inequality thanks to equations (\ref{eq-ric-I-bis}) and (\ref{eq-ric-II-bis}) : 
\begin{eqnarray*}
\|[(\Delta_{g,f}+1)\tilde{h}]e^{F_{\alpha_i}}\|^2_{L^2_f}&\lesssim_i&C+\sum_{k=0}^2\|[\nabla^{g,2-k}\tilde{H}\ast W_k]e^{F_{\alpha_i}}\|^2_{L^2_f}+I_{\alpha_i-1}^0(\tilde{h}).
\end{eqnarray*}
Now, by assumption $\HypI$, $\limsup_{+\infty}f\arrowvert W_0\arrowvert\leq\epsilon$, for some $\epsilon>0$ sufficiently small. Therefore, there exists a compact set $K_{\epsilon}$ of $M$ sufficiently large but independent of $i$ such that
\begin{eqnarray*}
\|[\tilde{H}\ast W_2]e^{F_{\alpha_i}}\|^2_{L^2_f}&\lesssim_i& I^0_{\alpha_i}(\tilde{H})\l\lesssim_i I^0_{\alpha_i-2}(\nabla^g_{\nabla^g f}\tilde{H})\\
&\lesssim_i&I^0_{\alpha_i-2}(\tilde{h})+C(K_{\epsilon}),\\
\|[\nabla^g\tilde{H}\ast W_1]e^{F_{\alpha_i}}\|^2_{L^2_f}&\lesssim_i&\int_M\arrowvert\nabla^g\tilde{H}\arrowvert^2e^{2F_{\alpha_i-1}}d\mu_f\\
&\lesssim_i&I^1_{\alpha_i-2}(\tilde{H})\lesssim_i I^1_{\alpha_i-3}(\tilde{h})+C(K_{\epsilon})\\
&\lesssim_i&J^0_{\alpha_i-2}(\tilde{h})+I^0_{\alpha_i-1}(\tilde{h})+C(K_{\epsilon})\\
&\lesssim_i&\|[(\Delta_{g,f}+1)\tilde{h}]e^{F_{\alpha_i-2}}\|^2_{L^2_f}+I^0_{\alpha_i-1}(\tilde{h})+C(K_{\epsilon}),\\
\|[\nabla^{g,2}\tilde{H}\ast W_0]e^{F_{\alpha_i}}\|^2_{L^2_f}&\lesssim_i&\epsilon\int_M\arrowvert\nabla^{g,2}\tilde{H}\arrowvert^2e^{2F_{\alpha_i-2}}d\mu_f=\epsilon I^2_{\alpha_i-4}(\tilde{H})\\
&\lesssim_i&\epsilon\sum_{j=0}^2I^j_{\alpha_i-4}(\tilde{h})+C(K_{\epsilon})\\
&\lesssim_i&J^0_{\alpha_i-1}(\tilde{h})+\epsilon I^0_{\alpha_i}(\tilde{h})+C(K_{\epsilon})\\
&\lesssim_i&J^0_{\alpha_i-1}(\tilde{h})+\epsilon I^0_{\alpha_i}(\tilde{h})+C(K_{\epsilon})\\
&\lesssim_i&\|[(\Delta_{g,f}+1)\tilde{h}]e^{F_{\alpha_i-1}}\|^2_{L^2_f}+\epsilon I^0_{\alpha_i}(\tilde{h})+C(K_{\epsilon}).
\end{eqnarray*}

Therefore, by the previous estimates, one gets :
\begin{eqnarray*}
\|[(\Delta_{g,f}+1)\tilde{h}]e^{F_{\alpha_i}}\|^2_{L^2_f}&\lesssim_i&\|[(\Delta_{g,f}+1)\tilde{h}]e^{F_{\alpha_i-1}}\|^2_{L^2_f}+\epsilon I^0_{\alpha_i}(\tilde{h})+C(K_{\epsilon}),
\end{eqnarray*}
that is,
\begin{eqnarray}
\|[(\Delta_{g,f}+1)\tilde{h}]e^{F_{\alpha_i}}\|^2_{L^2_f}&\lesssim_i&\epsilon I^0_{\alpha_i}(\tilde{h})+C(K_{\epsilon}).\label{eq-amorce-ii-bis}
\end{eqnarray}

Inequalities (\ref{eq-amorce-bis}) and (\ref{eq-amorce-ii-bis}) show that, for any positive $\epsilon$ sufficiently small, there exists a compact subset $K_{\epsilon}$ such that for any $\alpha_i$,
\begin{eqnarray*}
 I^0_{\alpha_i}(\tilde{h})\lesssim_i C(K_{\epsilon}).
\end{eqnarray*}
 Hence, $he^{F_{\alpha_+}}\in L^2(d\mu_f)$. Now, by (\ref{eq-amorce-ii-bis}), $(\Delta_fh)e^{F_{\alpha_+}}\in L^2(d\mu_f)$ hence $(\nabla^{g,k}h)e^{F_{\alpha_+-k}}\in L^2(d\mu_f)$ for $k=1,2$ by proposition \ref{prop-cov-der-est-wei-nor}. Finally, proposition \ref{prop-control-rad-der} implies that $(\nabla^{g,k}H)e^{F_{\alpha_++2-k}}\in L^2(d\mu_f)$ for $k=0,1,2$. Hence $\alpha_+\in S$.

We reach a contradiction by showing that there exists some positive $\epsilon$ such that $\alpha_++\epsilon\in S$.

To do so, we proceed as in the proof of claim \ref{S-ope}. With the same notations used in the proof of claim \ref{S-ope}, there exists a compact set $K\subset M$ such that for any $\alpha\in S$, for any tensor $T$ compactly supported outside $K$, any positive $s$ and any positive $\epsilon$ small compared to $\alpha$, 
\begin{eqnarray*}
(\alpha-\epsilon)I^0_{\alpha,\epsilon,s}(T)\lesssim_s \|[(\Delta_{g,f}+1)T]e^{F_{\alpha,\epsilon,s}}\|^2_{L^2_f},
\end{eqnarray*}
where $\lesssim_s$ is uniform in $s$. With the same notations used previously in the proof of claim \ref{claim-non-empty-ii}, we substitute $T$ to $\tilde{h}$ satisfying equations (\ref{eq-ric-I-bis}), (\ref{eq-ric-II-bis}) so that
\begin{eqnarray}\label{ineq-openness-bis}
(\alpha-\epsilon)I^0_{\alpha,\epsilon,s}(\tilde{h})\lesssim_s \|[(\Delta_{g,f}+1)\tilde{h}]e^{F_{\alpha,\epsilon,s}}\|^2_{L^2_f}.
\end{eqnarray}
As before, estimating each term present on the righthand side of equation (\ref{eq-ric-II-bis}) as we already did previously, leads to
\begin{eqnarray*}
\|[\tilde{H}\ast W_2]e^{F_{\alpha,\epsilon,s}}\|^2_{L^2_f}&\lesssim_s&I^0_{\alpha-2,\epsilon,s}(\tilde{h})+C(K_{\alpha}),\\
\|[\nabla^g\tilde{H}\ast W_1]e^{F_{\alpha-1,\epsilon,s}}\|^2_{L^2_f}&\lesssim_s&\|[(\Delta_{g,f}+1)\tilde{h}]e^{F_{\alpha-2,\epsilon,s}}\|^2_{L^2_f}+I^0_{\alpha-1,\epsilon,s}(\tilde{h})+C(K_{\alpha}),\\
\|[\nabla^{g,2}\tilde{H}\ast W_0]e^{F_{\alpha,\epsilon,s}}\|^2_{L^2_f}&\lesssim_s&\epsilon I^2_{\alpha-4,\epsilon,s}(\tilde{H})\\
&\lesssim_s&\|[(\Delta_{g,f}+1)\tilde{h}]e^{F_{\alpha-1,\epsilon,s}}\|^2_{L^2_f}+\epsilon I^0_{\alpha,\epsilon,s}(\tilde{h})+C(K_{\alpha}).
\end{eqnarray*}
Therefore,
\begin{eqnarray*}
(\alpha-\epsilon)I^0_{\alpha,\epsilon,s}(\tilde{h})\lesssim_s\|[(\Delta_{g,f}+1)\tilde{h}]e^{F_{\alpha,\epsilon,s}}\|^2_{L^2_f}\lesssim_s \epsilon I^0_{\alpha,\epsilon,s}(\tilde{h})+C(K_{\alpha}),
\end{eqnarray*}
which implies, for $\epsilon$ small enough,
\begin{eqnarray*}
I^0_{\alpha,\epsilon,s}(\tilde{h})\leq C(\alpha,\epsilon),
\end{eqnarray*}
where $C(\alpha,\epsilon)$ is a positive constant independent of $s$. Hence $\alpha+\epsilon\in S$ by reasoning as above. In particular, $\alpha_++\epsilon\in S$ for some small $\epsilon$. Contradiction.

\end{proof}

\end{proof}

We end this section by proving lemma \ref{com-ric-cur-inf}.

\begin{proof}[Proof of lemma \ref{com-ric-cur-inf}]
\begin{itemize}
\item Let $(\Sigma^2,g,\nabla^gf)$ be an asymptotically conical expanding gradient Ricci soliton. 
Then, 
\begin{eqnarray}\label{const-resc-curv-dim-2}
\nabla^g(e^f\R_g)&=&(\R_g\nabla^gf+\nabla^g \R_g)e^f=(\R_g\nabla^gf-2\Ric(g)(\nabla^gf))e^f=0,
\end{eqnarray}
by the soliton identity (\ref{equ:2}) and the fact that $\Ric(g)=(\R_g/2)g$ in dimension $2$. As $f$ is proper, it attains its minimum at some point $p\in \Sigma$, i.e. by (\ref{equ:3}) and (\ref{const-resc-curv-dim-2}),
\begin{eqnarray*}
\Rinf(g)=\lim_{+\infty}e^{f+\mu(g)}\R_g=e^{f(p)+\mu(g)}\R_g(p)=e^{\min_{\Sigma}f+\mu(g)}(\min_{\Sigma}f+\mu(g)).
\end{eqnarray*}

\item The computation of $\Rinf(g)$ in the Feldman-Ilmanen-Knopf examples is mainly due to Siepmann [Remark $3.3.4$, \cite{Sie-PHD}]. One can also give a complete description of $\Ricinf(g)$ in this example.
\end{itemize}
\end{proof}

\appendix

\section{Soliton equations}\label{sol-equ-sec}

The next lemma gathers well-known Ricci soliton identities together with the (static) evolution equations satisfied by the curvature tensor.

Recall first that an expanding gradient Ricci soliton is said \textit{normalized} if $\int_Me^{-f}d\mu_g=(4\pi)^{n/2}$ (whenever it makes sense).

\begin{lemma}\label{id-EGS}
Let $(M^n,g,\nabla^g f)$ be a normalized expanding gradient Ricci soliton. Then the trace and first order soliton identities are :
\begin{eqnarray}
&&\Delta_g f = \R_g+\frac{n}{2}, \label{equ:1} \\
&&\nabla^g \R_g+ 2\Ric(g)(\nabla^g f)=0, \label{equ:2} \\
&&\arrowvert \nabla^g f \arrowvert^2+\R_g=f+\mu(g), \label{equ:3}\\
&&\div_g\Rm(g)(Y,Z,T)=\Rm(g)(Y,Z, \nabla f,T),\label{equ:4}
\end{eqnarray}
for any vector fields $Y$, $Z$, $T$ and where $\mu(g)$ is a constant called the entropy.\\

The evolution equations for the curvature operator, the Ricci tensor and the scalar curvature are :
\begin{eqnarray}
&& \Delta_f \Rm(g)+\Rm(g)+\Rm(g)\ast\Rm(g)=0,\label{equ:5}\\
&&\Delta_f\Ric(g)+\Ric(g)+2\Rm(g)\ast\Ric(g)=0,\label{equ:6}\\
&&\Delta_f\R_g+\R_g+2\arrowvert\Ric(g)\arrowvert^2=0,\label{equ:7}
\end{eqnarray}
where, if $A$ and $B$ are two tensors, $A\ast B$ denotes any linear combination of contractions of the tensorial product of $A$ and $B$.
\end{lemma}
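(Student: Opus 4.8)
The plan is to derive everything from the defining identity $\Hess f = \Ric(g) + g/2$, which is just the soliton equation $\Ric(g) + \nabla^{g,2}(-f) = -g/2$ rewritten. The first three identities then fall out quickly. Taking the metric trace of $\Hess f = \Ric(g) + g/2$ gives $\Delta_g f = \R_g + n/2$, which is (equ:1). For (equ:2) I would take the divergence of the soliton equation: on the right one uses the contracted second Bianchi identity $\div\Ric = \tfrac12\nabla\R$, while on the left one commutes the outer derivatives via the Bochner-type identity $\nabla^j\nabla_i\nabla_j f = \nabla_i\Delta f + \Ric_{ij}\nabla^j f$, the Ricci term coming from $[\nabla_j,\nabla_i]\nabla_j f$; substituting $\Delta f = \R+n/2$ and comparing the two sides yields $\nabla\R + 2\Ric(\nabla f) = 0$. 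For (equ:3) I would show $\nabla(|\nabla f|^2 + \R - f) = 0$: since $\nabla_i|\nabla f|^2 = 2\Hess f(\nabla f)_i = 2\Ric(\nabla f)_i + \nabla_i f$, adding $\nabla_i\R = -2\Ric(\nabla f)_i$ from (equ:2) leaves exactly $\nabla_i f$; the function is therefore constant, and this constant is named $\mu(g)$, fixed as the entropy by the normalization $\int_M e^{-f}d\mu_g = (4\pi)^{n/2}$.

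For the second-order Bianchi identity (equ:4), I would start from the contracted second Bianchi identity in the form $\nabla^a R_{abcd} = \nabla_c\Ric_{bd} - \nabla_d\Ric_{bc}$. Substituting the soliton equation converts the right-hand side into differences of third covariant derivatives of $f$; exploiting the symmetry of the Hessian and commuting derivatives through $[\nabla_c,\nabla_d]\nabla_b f = -R_{cdb}{}^e\nabla_e f$, the third-derivative terms collapse into a single curvature contraction against $\nabla f$, which is precisely $\Rm(g)(\cdot,\cdot,\nabla f,\cdot)$ after the index bookkeeping.

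Finally, for the static evolution equations (equ:5)--(equ:7), I would use that an expanding gradient soliton is a self-similar Ricci flow, so the classical reaction--diffusion equations $\partial_t\Rm = \Delta\Rm + \Rm\ast\Rm$, $\partial_t\Ric = \Delta\Ric + \Rm\ast\Ric$, and $\partial_t\R = \Delta\R + 2|\Ric|^2$ can be converted into time-independent identities: the $\partial_t$ action is replaced by the Lie-derivative/scaling of the soliton structure, the drift $\nabla_{\nabla f}$ is absorbed into the weighted Laplacian $\Delta_f = \Delta + \nabla_{\nabla f}$, and the expander constant $-\tfrac12$ supplies the zeroth-order terms $+\Rm$, $+\Ric$, $+\R$. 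Alternatively, (equ:7) can be obtained directly by applying $\Delta_f$ to (equ:3) and invoking (equ:1), (equ:2), together with the Bochner identity $\Delta\R = -2|\Ric|^2 + 2\div(\div\Ric)$ and the contracted Bianchi identity. The main obstacle throughout is purely bookkeeping: fixing sign and curvature conventions so that the expander constant produces exactly the stated zeroth-order coefficients, and tracking the commutator of third derivatives of $f$ in (equ:4); no genuinely new idea beyond the classical soliton computations is needed.
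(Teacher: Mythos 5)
Your proposal is correct, and it is worth noting that the paper does not actually prove this lemma at all: its ``proof'' is the single line ``See [Chap.\ 1, \cite{Cho-Lu-Ni-I}] for instance.'' What you have written is a self-contained version of the standard computations that citation points to, and every step checks out. Identities (\ref{equ:1})--(\ref{equ:3}) follow, as you say, from tracing the soliton equation $\Hess f=\Ric(g)+g/2$, taking its divergence with the contracted second Bianchi identity and the commutation $\div\Hess f=\nabla\Delta f+\Ric(\nabla f)$, and then contracting against $\nabla f$; identity (\ref{equ:4}) is the once-contracted second Bianchi identity $\nabla^aR_{abcd}=\nabla_c\Ric_{bd}-\nabla_d\Ric_{bc}$ combined with $\nabla\Ric=\nabla^3 f$ and the commutator $[\nabla_c,\nabla_d]\nabla_b f=-R_{cdb}{}^e\nabla_e f$. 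For (\ref{equ:5})--(\ref{equ:7}) your self-similarity argument is the standard one: writing $g(\tau)=(1+\tau)\phi_\tau^*g$, the scaling factor and the Lie-derivative correction terms (each slot contracted against $\Hess f=\Ric(g)+g/2$) produce precisely the zeroth-order terms $+\Rm$, $+\Ric$, $+\R_g$, while the drift $\nabla_{\nabla f}$ combines with $\Delta$ into $\Delta_f$; the factor $2$ in (\ref{equ:6}) is immaterial since $\ast$ denotes an unspecified linear combination of contractions. Your alternative derivation of (\ref{equ:7}) also verifies: applying $\Delta_f$ to (\ref{equ:3}), using Bochner, $|\Hess f|^2=|\Ric(g)|^2+\R_g+n/4$, and $\langle\nabla f,\nabla\R_g\rangle=-2\Ric(\nabla f,\nabla f)$ from (\ref{equ:2}), one gets exactly $\Delta_f\R_g=-\R_g-2|\Ric(g)|^2$. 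The only caveat, which you already flag, is that signs in (\ref{equ:2}) and (\ref{equ:4}) depend on the curvature convention; with the convention making $\div\Hess f=\nabla\Delta f+\Ric(\nabla f)$ hold, everything lands as stated.
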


\begin{proof}
See [Chap.$1$,\cite{Cho-Lu-Ni-I}] for instance.

\end{proof}

\begin{prop}\label{pot-fct-est}
Let $(M^n,g,\nabla^g f)$ be an expanding gradient Ricci soliton.
\begin{itemize}
\item If $(M^n,g,\nabla^g f)$ is non Einstein, if $v:=f+\mu(g)+n/2$,
\begin{eqnarray}
&&\Delta_fv=v,\quad v>\arrowvert\nabla v\arrowvert^2.\label{inequ:1}\\
\end{eqnarray}

\item Assume $\Ric(g)\geq 0$ and assume $(M^n,g,\nabla^g f)$ is normalized. Then $M^n$ is diffeomorphic to $\mathbb{R}^n$ and
\begin{eqnarray}
&&v\geq \frac{n}{2}>0.\label{inequ:2}\\
&&\frac{1}{4}r_p(x)^2+\min_{M}v\leq v(x)\leq\left(\frac{1}{2}r_p(x)+\sqrt{\min_{M}v}\right)^2,\quad \forall x\in M,\label{inequ:3}\\
&&\AVR(g):=\lim_{r\rightarrow+\infty}\frac{\vol B(q,r)}{r^n}>0,\quad\forall q\in M,\label{inequ:avr}\\
&&-C(n,V_0,R_0)\leq\min_{M}f\leq 0\quad;\quad\mu(g)\geq\max_{M}\R_g\geq 0,\label{inequ:ent}
\end{eqnarray}
where $V_0$ is a positive number such that $\AVR(g)\geq V_0$, $R_0$ is such that $\sup_{M}\R_g\leq R_0$ and $p\in M$ is the unique critical point of $v$.\\

\item Assume $\Ric(g)=\textit{O}(r_p^{-2})$ where $r_p$ denotes the distance function to a fixed point $p\in M$. Then the potential function is equivalent to $r_p^2/4$ (up to order $2$).
\end{itemize}
\end{prop}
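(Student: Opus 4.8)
The three items are classical facts about expanding gradient Ricci solitons, so the plan is to reduce each to the soliton identities of Lemma \ref{id-EGS} together with elementary comparison arguments. For the first item I would compute $\Delta_f v$ directly: since $\nabla v=\nabla f$ we have $\Delta_f v=\Delta f+|\nabla f|^2$, and substituting the trace identity (\ref{equ:1}) and the first order identity (\ref{equ:3}) the scalar curvature cancels, leaving $\Delta_f v=(\R_g+\tfrac n2)+(f+\mu(g)-\R_g)=v$. The same two identities give the pointwise relation $v-|\nabla v|^2=\R_g+\tfrac n2$, so the strict inequality $v>|\nabla v|^2$ is exactly the sharp lower bound $\R_g>-\tfrac n2$. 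I would obtain this by a weighted maximum principle applied to the evolution equation (\ref{equ:7}): writing $u=\R_g+\tfrac n2$ and using $|\Ric|^2\ge \R_g^2/n$ forces $\R_g\ge -\tfrac n2$, the equality case propagating (strong maximum principle) only in the excluded Einstein/Gaussian case. The delicate point here is non-compactness, since $\inf\R_g$ need not be attained, so I would run the argument on the $\Delta_f$-superharmonic quantity rather than at a single point (or simply cite the known sharp scalar curvature bound for expanders).

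For the second item, the hypothesis $\Ric(g)\ge0$ combined with the Bakry-\'Emery identity $\nabla^2 f=\Ric(g)+\tfrac g2$ gives $\nabla^2 v\ge\tfrac g2>0$, so $v$ is strictly convex. Along any unit speed geodesic $(v\circ\gamma)''\ge\tfrac12$, whence $v$ grows at least quadratically, is proper, and has a unique critical point $p$ (its global minimum); a proper strictly convex exhaustion then yields that $M^n$ is diffeomorphic to $\mathbb{R}^n$. Evaluating $v-|\nabla v|^2=\R_g+\tfrac n2$ at $p$ gives $\min_M v=\R_g(p)+\tfrac n2\ge\tfrac n2$, which is (\ref{inequ:2}). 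For the two-sided bound (\ref{inequ:3}) I would integrate $(v\circ\gamma)''\ge\tfrac12$ from $p$ (with vanishing initial derivative) for the lower bound, and for the upper bound note that $\Ric\ge0$ gives $|\nabla v|^2=v-\R_g-\tfrac n2\le v$, so $\sqrt v$ is $\tfrac12$-Lipschitz and $\sqrt{v(x)}\le\sqrt{\min_M v}+\tfrac12 r_p(x)$.

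The volume limit (\ref{inequ:avr}) exists and is $\le\omega_n$ by Bishop-Gromov; for positivity I would combine (\ref{inequ:3}) with the normalization $\int_M e^{-f}d\mu_g=(4\pi)^{n/2}$ and a coarea/Bishop-Gromov comparison to trap $\AVR$ between positive constants, as in the standard references on nonnegatively curved expanders. For the entropy bounds (\ref{inequ:ent}) I would argue: (i) identity (\ref{equ:2}) gives $\langle\nabla\R_g,\nabla f\rangle=-2\Ric(\nabla f,\nabla f)\le0$, so $\R_g$ is nonincreasing along the outgoing $\nabla f$-flow lines emanating from the source $p$, whence $\max_M\R_g=\R_g(p)=\min_M f+\mu(g)$; (ii) rewriting the normalization as $\int_M e^{-v}d\mu_g=(4\pi)^{n/2}e^{-\mu(g)-n/2}$ and inserting the lower bound of (\ref{inequ:3}) together with $\int_M e^{-r_p^2/4}d\mu_g\le(4\pi)^{n/2}$ (Bishop-Gromov for the decreasing weight) yields $\min_M f\le0$, hence $\mu(g)\ge\max_M\R_g\ge0$; (iii) the reverse bound $\min_M f\ge -C(n,V_0,R_0)$ follows by inserting the upper bound of (\ref{inequ:3}) — noting that $\min_M v=\R_g(p)+\tfrac n2\le R_0+\tfrac n2$ is independent of $\mu(g)$ — and using $\AVR\ge V_0$ to bound $\int_M e^{-v}d\mu_g$ from below, which caps $\mu(g)$.

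Finally, under $\Ric(g)=\textit{O}(r_p^{-2})$, the plan is to integrate the Hessian identity $\nabla^2 f=\tfrac g2+\textit{O}(r_p^{-2})$ twice along minimal geodesics from a basepoint: the $\tfrac g2$ term produces the leading term $\tfrac14 r_p^2$, while the double radial integral of the $\textit{O}(t^{-2})$ error is of lower order, giving $f=\tfrac14 r_p^2+\textit{o}(r_p^2)$. Equivalently, since $\R_g=\tr\Ric=\textit{O}(r_p^{-2})\to0$, identity (\ref{equ:3}) gives $|\nabla f|^2\sim f$, and setting $\rho:=2\sqrt v$ one finds $|\nabla\rho|^2=1-(\R_g+\tfrac n2)/v\to1$, so $\rho$ is asymptotic to $r_p$ and $f\sim\rho^2/4\sim r_p^2/4$. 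The main obstacle across the whole proposition is the positivity of the asymptotic volume ratio in (\ref{inequ:avr}): unlike the purely local comparison steps, it genuinely couples the normalization, the quadratic growth and Bishop-Gromov, and is where I would have to be most careful (or defer to the known literature on $\Ric\ge0$ expanders).
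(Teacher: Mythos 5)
The first thing to note is that the paper itself contains no proof of this proposition: it simply says ``For a proof, see \cite{Der-Asy-Com-Egs} and the references therein.'' So your proposal can only be judged on its own merits against the standard arguments. Most of it is correct and is indeed the standard route: the computation $\Delta_f v=v$ and the identity $v-|\nabla v|^2=\R_g+\tfrac n2$ follow exactly as you say from (\ref{equ:1}) and (\ref{equ:3}); the convexity $\nabla^{g,2}v=\Ric(g)+\tfrac g2\geq\tfrac g2$ gives properness, the unique critical point, the diffeomorphism with $\mathbb{R}^n$, (\ref{inequ:2}), and both halves of (\ref{inequ:3}); your three-step derivation of (\ref{inequ:ent}) is sound, including the observation $\max_M\R_g=\R_g(p)=\min_Mf+\mu(g)$ from (\ref{equ:2}), the Bishop bound $\int_Me^{-r_p^2/4}d\mu_g\leq(4\pi)^{n/2}$ giving $\min_Mf\leq0$, and the use of $\AVR(g)\geq V_0$ (correctly, in the monotone direction) to cap $\mu(g)$; and the double integration of the Hessian identity settles the last item. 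Deferring the sharp bound $\R_g\geq-\tfrac n2$ (with equality only in the Einstein case $\Ric(g)=-\tfrac g2$) to the literature is also legitimate: your pointwise maximum-principle sketch is not rigorous on a noncompact manifold, and the honest proof goes through Chen's theorem on complete Ricci flows or a carefully localized argument, so the citation is the right move there.

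The genuine gap is (\ref{inequ:avr}). The mechanism you propose --- ``combine (\ref{inequ:3}) with the normalization and a coarea/Bishop--Gromov comparison to trap $\AVR$'' --- cannot work, because Bishop--Gromov monotonicity transfers volume lower bounds in the wrong direction: under $\Ric(g)\geq0$ the ratio $\vol B(p,r)/r^n$ is \emph{nonincreasing}, so $\AVR(g)$ bounds the finite-scale ratios from below, never conversely. What the normalization plus the lower bound $v\geq\min_Mv+r_p^2/4$ actually yields is a positive lower bound on the Gaussian-weighted volume, hence on $\vol B(p,r_0)$ for some finite $r_0$; that is perfectly compatible with $\AVR(g)=0$ as far as comparison geometry is concerned. (Note also that the normalization carries no geometric information --- it is achieved by adding a constant to $f$, which changes $\mu(g)$ but leaves $v$ and the metric untouched --- so any proof of $\AVR(g)>0$ leaning essentially on it is suspect.) A correct argument must use the soliton structure at infinity, not just $\Ric(g)\geq0$: for instance, on the level set $\{v=t\}$ with unit normal $\mathbf{n}=\nabla v/|\nabla v|$ one has $|\nabla v|^2=t-\R_g-\tfrac n2\leq t-\tfrac n2$ and $\Delta v-\nabla^2v(\mathbf{n},\mathbf{n})=\R_g-\Ric(\mathbf{n},\mathbf{n})+\tfrac{n-1}2\geq\tfrac{n-1}2$, so along the flow of $\nabla v/|\nabla v|^2$ the area element satisfies $\frac{d}{dt}\ln dA\geq\frac{(n-1)/2}{t-n/2}$; integrating gives $A(\{v=t\})\gtrsim t^{(n-1)/2}$, the coarea formula with $|\nabla v|\leq\sqrt t$ then gives $\vol\{v\leq t\}\gtrsim t^{n/2}$, and the lower bound in (\ref{inequ:3}) converts this into $\vol B(p,R)\gtrsim R^n$. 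This (or the log-Sobolev approach of Carrillo--Ni) is the content of the result in the reference the paper cites; as written, your step for (\ref{inequ:avr}) is the one place the proposal genuinely fails --- precisely where you flagged your own uncertainty.
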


For a proof, see \cite{Der-Asy-Com-Egs} and the references therein.

\bibliographystyle{alpha.bst}
\bibliography{bib-uni-con-ric-sol}

\end{document}